\newif\ifarxiv
	\definecolor{mydarkblue}{rgb}{0,0.08,0.45}
	\definecolor{mycitationcolor}{rgb}{0,0.08,0.45}
	\definecolor{mylinkcolor}{rgb}{1,0.0,0} %red
	\definecolor{mylinkcolor}{rgb}{0,0.08,0.85}%{0.58, 0.15, 0.14}
\newcommand{\defeq}{\mathrel{\mathop:}=}
\newcommand{\argmin}{\mathop{\mathrm{argmin}}}
\newcommand{\argmax}{\mathop{\mathrm{argmax}}}
\newcommand{\iprod}[2]{\langle #1, #2 \rangle}
\newcommand{\norm}[1]{\|{#1} \|}
\newcommand{\xhat}{{\widehat{x}}}
\newcommand{\yhat}{{\widehat{y}}}
\newcommand{\cX}{\mathcal{X}}
\newcommand{\cY}{\mathcal{Y}}
\newcommand{\PY}[1]{\mathcal{P}_{\cY}\left({#1}\right)}
\newcommand{\order}[1]{O\left(#1\right)}
\newcommand{\otilde}[1]{\widetilde{O}\left(#1\right)}
\newcommand{\papertitle}{Efficient Algorithms for Smooth Minimax Optimization}
\title{\papertitle}
\author{%
	Kiran K.~Thekumprampil \\
	University of Illinois at Urbana-Champaign\\
	% Illinois, USA\\
	\texttt{thekump2@illinois.edu} \\
	% examples of more authors
	\And
	Prateek Jain\\
	Microsoft Research, India\\
	\texttt{prajain@microsoft.com} \\
	\And
	Praneeth Netrapalli\\
	Microsoft Research, India\\
	\texttt{praneeth@microsoft.com}\\
	\And
	Sewoong Oh\\
	University of Washington, Seattle\\
	\texttt{sewoong@cs.washington.edu}\\
}
\def\reals{\mathbb{R}}
\def\cX{\mathcal{X}}
\def\simplex{\Delta}
\def\tx{\tilde{x}}
\def\ty{\tilde{y}}
\def\tepsilon{\tilde{\varepsilon}}
\def\wf{\widehat{f}}
\def\wg{\widehat{g}}
\newcommand{\Ip}[2]{\left\langle#1, #2\right\rangle}
\DeclarePairedDelimiter{\ceil}{\lceil}{\rceil}
\newtheorem{theorem}{Theorem}
\newtheorem{lemma}{Lemma}
\newtheorem{definition}{Definition}
\newtheorem{corollary}{Corollary}
\newcommand{\mpscc}{{DIAG}\xspace}
\newcommand{\cmpscc}{{C-DIAG}\xspace}
\newcommand{\mpstep}{\texttt{Imp-STEP}\xspace}
\newcommand{\lqancc}{{Prox-DIAG}\xspace}
\newcommand{\lqafncc}{{\selectfont Prox-FDIAG}\xspace}
\newcommand{\alqafncc}{{\selectfont Adaptive Prox-FDIAG}\xspace}
\newcommand*\widefbox[1]{\fbox{\hspace{2em}#1\hspace{2em}}}
\begin{document}
	
\ifarxiv
%%%%%% Custom title
\date{}
%\author{Kiran Koshy Thekumprampil, Prateek Jain, Praneeth Netrapalli,  Sewoong Oh}
\author{
	Kiran Koshy Thekumparampil$^\dagger$, Prateek Jain$^\ddagger$,  Praneeth Netrapalli$^\ddagger$, Sewoong Oh$^\pm$
	\thanks{Author emails are {\text thekump2@illinois.edu}, \text{prajain@microsoft.com}, {\text{praneeth@microsoft.com}, and \text{sewoong@cs.washington.edu}}. 
		%This work used the Extreme Science and Engineering Discovery Environment (XSEDE), which is supported by National Science Foundation grant number OCI-1053575.  Specifically, it used the Bridges system, which is supported by NSF award number ACI-1445606, at the Pittsburgh Supercomputing Center (PSC).
	}\\
	\\
	$^\dagger$University of Illinois at Urbana-Champaign,
	$^\ddagger$Microsoft Research, India,\\
	$^\pm$University of Washington, Seattle
}
\else
\fi

\maketitle

\begin{abstract}
This paper studies first order methods for solving smooth minimax optimization problems $\min_x \max_y g(x,y)$ where $g(\cdot,\cdot)$ is smooth and $g(x,\cdot)$ is concave for each $x$. In terms of $g(\cdot,y)$, we consider two settings -- strongly convex and nonconvex -- and improve upon the best known rates in both. For strongly-convex $g(\cdot, y),\ \forall y$, we propose a new algorithm combining Mirror-Prox and Nesterov's AGD, and show that it can find global optimum in $\otilde{1/k^2}$ iterations, improving over current state-of-the-art rate of $O(1/k)$. We use this result along with an inexact proximal point method to provide $\otilde{1/k^{1/3}}$ rate for finding stationary points in the nonconvex setting where $g(\cdot, y)$ can be nonconvex. This improves over current best-known rate of $O(1/k^{1/5})$. Finally, we instantiate our result for finite nonconvex minimax problems, i.e., $\min_x \max_{1\leq i\leq m} f_i(x)$, with nonconvex $f_i(\cdot)$, to obtain convergence rate of $O(m(\log m)^{3/2}/k^{1/3})$ total gradient evaluations for finding a stationary point.% which improves the current best known rates %but with significantly smaller dependence on $m$. 
\end{abstract}

% !TEX root = ./smooth_minimax.tex
\section{Introduction}
\label{sec:intro}
In this paper we study smooth minimax problems of the form: 
\begin{align}\label{eqn:minimax}
	\min_{x \in \mathcal{X}} \; \max_{y\in \mathcal{Y}} \;g(x,y)\;\;,\ \ g: {\mathcal{X}\times \mathcal{Y}}\rightarrow \mathbb{R}, \; g \mbox{ is smooth i.e., gradient Lipschitz}.
\end{align}
%where $f(x,\cdot)$ is concave for every $x$
The problem has applications in several domains such as machine learning~\cite{goodfellow2014generative,madry2017towards}, optimization~\cite{bertsekas2014constrained}, statistics~\cite{berger2013statistical}, mathematics~\cite{kinderlehrer1980introduction}, and game theory~\cite{myerson2013game}. Given the importance of these problems, there is an extensive body of work that studies various algorithms and their convergence properties. The vast majority of existing results for this problem focus on the convex-concave setting, where $g(\cdot,y)$ is convex for every $y$ and $g(x,\cdot)$ is concave for every $x$. 
%Most of the existing results focus on convex-concave minimax problems where $g(\cdot, y)$ is convex.
The best known convergence rate in this setting is $O(1/k)$ for the primal-dual gap, achieved for example by Mirror-Prox~\cite{nemirovski2004prox}. 
%proposed the Mirror-Prox algorithm which gives a convergence rate of . 
%\textcolor{blue}{
This rate is also known to be optimal for the class of smooth convex-concave problems \cite{ouyang2018lower}. 
A natural question is 
whether we can achieve a faster convergence if we have strong convexity (as opposed to just convexity) of $g(\cdot,y)$. 
We answer this in the affirmative, 
by introducing an algorithm that achieves a convergence rate of 
$\otilde{1/k^2}$ for the general smooth, strongly-convex--concave minimax problem. 
The algorithm we propose is a novel combination of Mirror-Prox and Nesterov's accelerated gradient descent. 
This matches the known lower bound 
of $\Omega(1/k^2)$ from \cite{ouyang2018lower}, 
closing the gap up to a poly-logarithmic factor.  
The only known upper bounds  
that obtain a rate of $O(1/k^2)$ in this context 
are for very special cases,  
where $x$ and $y$ are connected through a bi-linear term 
or $g(x,\cdot)$ is linear in $y$
\cite{nesterov2005excessive,juditsky2011first,GOS14,chambolle2016ergodic,he2016accelerated,xu2017iteration,hamedani2018primal,xie2019accelerated}. 
%}
%
%Our first contribution is to close this gap by 
%presenting an algorithm that achieves convergence rate of 
%$\otilde{1/k^2}$ for the general smooth, strongly-convex--concave minimax problem. 
%The algorithm we propose is a novel combination of Mirror-Prox and Nesterov's accelerated gradient descent.
%
%This matches the known lower bound 
%of $\Omega(1/k^2)$ from \cite{ouyang2018lower} 
%in terms of its dependence in $k$ up to a poly-logarithmic factor, 
%settling the question of  

While most theoretical results focus on the convex-concave setting, several real world problems fall outside this class. A slightly larger class, which captures several more applications, is the class of smooth nonconvex--concave minimax problems, where 
%\praneeth{TODO}
%However, several real world problems naturally reduce to nonconvex concave minimax problems, i.e, problems where 
$g(x, \cdot)$ is {\em concave} for every $x$ but $g(\cdot, y)$ can be nonconvex. For example, finite minimax problems, i.e., $\min_x \max_{i=1}^m f_i(x) = \min_x \max_{0\preceq y\preceq 1, \sum_{i=1}^m y_i=1} \sum_i y_i \cdot f_i(x)\defeq g(x,y)$ belong to this class,
%where $g(x, \cdot)$ is a linear function. 
and so do nonconvex constrained optimization problems~\cite{KomiyamaTHS18}. In addition, several machine learning problems with non-decomposable loss functions \cite{kar2015surrogate} also belong to this class. 

%Despite widespread usage of nonconvex concave minimax problems, so far, there are limited results in this domain.

%While, no algorithm with a better convergence rate is known even if we further assume that $f(\cdot,y)$ is strongly convex $\forall y\in \mathcal{Y}$, it is not clear if $O(1/k)$ is the best achievable rate for this setting as well. In fact, for a special case of such strongly-convex--concave problems (with only a linear coupling between $x$ and $y$),~\cite{nesterov2005excessive}  gives a method with convergence rate $O(1/k^2)$.
%However, the rate is not optimal for strongly-convex concave problems. Using a novel view of Mirror-Prox algorithm and combining it with the Nesterov's accelerated gradient descent (AGD) method, we obtain a faster rate (as well as computational complexity) of $O(1/k^2)$ for {\em general strongly-convex concave}  minimax problems. 

In this general nonconvex concave setting however, we cannot hope to find global optimum efficiently as even the special case of nonconvex optimization is NP-hard. Similar to nonconvex optimization, we might hope to find an approximate stationary point \cite{nesterov1998introductory}.
%In contrast to nonconvex optimization, the definition of stationarity is not obvious in this setting. For example, ~\cite{daskalakis2018limit} and others consider the notion of local Nash equilibrium, but such points need-not even exist for some problems.~\cite{jin2019minmax} argues that this notion is not meaningful in the context of machine learning applications and proposes a new notion of optimality that is also used by \cite{rafique2018non} and reduces to the Moreau-envelop based notion of stationarity for the general non-smooth non-convex optimation; we also study this notion. 
%Independent of this work, \cite{nouiehed2019solving} propose another notion of approximate stationarity but it can be shown \cite{nouiehed2019solving}'s notion is implied by the notion by \cite{rafique2018non,jin2019minmax}.  
%\cite{jin2019minmax} shows that a simple gradient based algorithm converges to a first order stationary point in $O(1/k^{1/5})$ iterations. A few recent results have shown improved rates for special cases of the problem. \cite{davis2018stochastic} proposed a proximal SGD based method that converges to the stationary point of weakly-convex functions (see Definition~\ref{def:weak-convex}) at $O(1/k^{1/4})$.
%\cite{rafique2018non} proposes a proximal method with convergence rate of 
%$O(1/k^{1/6})$ for finite minimax over $m$ functions and 
%$O(1/k^{1/6})$ for smooth minimax problems which is incomparable to the $O({1}/{k^{1/3.5}})$ rate obtained by \cite{nouiehed2019solving} due to a   weaker notion of stationarity. 

Our second contribution is a new algorithm and a faster rate for the {\em general smooth nonconvex--concave} minimax problem. Our algorithm 
%iteratively constructs and optimizes local quadratic approximations for the 
is an inexact proximal point method for the nonconvex function $f(x) \defeq \max_{y\in \mathcal{Y}} g(x,y)$. The key insight is that the proximal point problem
%local quadratic approximation 
in each iteration results in a strongly-convex concave minimax problem, for which we use our improved algorithm to obtain the overall computation/iteration complexity of $\otilde{1/k^{1/3}}$ thus improving over the previous best known rate of $O(1/k^{1/5})$~\cite{jin2019minmax}\footnote{While~\cite{jin2019minmax} gives a rate of $\order{1/k^{1/4}}$ with an approximate maximization oracle for $\max_{y\in \mathcal{Y}} g(x,y)$, taking into account the cost of implementing such a maximization oracle gives a rate of $\order{1/k^{1/5}}$.}. % for the same notion of approximate stationarity. %As mentioned above, the independently proposed rate of $O(1/k^{3.5})$ by \cite{nouiehed2019solving} is incomparable as it uses a weaker notion of stationarity. 
%Our result directly implies a convergence rate of $O(1/k^{1/3})$ for a special class of nonsmooth nonconvex minimization problems (i.e., those that can be written as smooth nonconvex concave minimax problems) improving upon the best known rate 
%significantly upon the result of \cite{jin2019minmax} and is in fact better than some of the specialized results that address a sub-class of nonconvex concave problems such as weakly convex functions \cite{davis2018stochastic}. 

Finally, we specialize our result to finite minimax problems, i.e., $\min_x \max_{1\leq i\leq m} f_i(x)$ where $f_i(x)$ can be nonconvex function but each $f_i$ is a smooth function; nonconvex constrained optimization problems can be reduced to such finite minimax problems. For these, we obtain a rate of $\otilde{m(\log m)^{3/2}/k^{1/3}}$ total gradient computations which improves upon the state-of-the-art rate ($O(m\sqrt{\log m}/k^{1/5})$) in this setting as well.

\ifarxiv
\bigskip \noindent
\fi
\textbf{Summary of contributions}: See also Table~\ref{tab:results}.\\
1. $\otilde{1/k^2}$ convergence rate for smooth, strongly-convex -- concave problems, improving upon the previous best known rate of $\order{1/k}$ and, \\
2. $\otilde{1/k^{1/3}}$ convergence rate for smooth, nonconvex -- concave problems, improving upon the previous best known rate of $\order{1/k^{1/5}}$.
% for : a) strongly-convex concave problems and b) nonconvex concave problems, 
%c) nonconvex finite minimax problems, 
%and compares them to the respective existing state-of-the-art results. 
%\begin{table}[t]
%%	\renewcommand{\arraystretch}{1.3}
%	\begin{center}
%		\label{tab:results}
%		\begin{tabular}{|c|c|c|c|}
%			\hline
%			\textbf{Setting} & \textbf{Optimality notion} & \textbf{\begin{tabular}{@{}c@{}} Previous \\ state-of-the-art \end{tabular}} & \textbf{Our results} \\
%			\hline
%			Convex & Primal-dual gap & \begin{tabular}{@{}c@{}} $\order{k^{-1}}$~\cite{nemirovski2004prox} \end{tabular} & - \\
%			\hline
%			Strongly convex & Primal-dual gap & \begin{tabular}{@{}c@{}} $\order{k^{-1}}$~\cite{nemirovski2004prox} \end{tabular} & $\otilde{{k^{-2}}}$ \\
%			\hline
%			Nonconvex & \begin{tabular}{@{}c@{}} Approximate \\ stationary point \end{tabular} & $\order{k^{-1/5}}$~\cite{jin2019minmax} & $\otilde{{k^{-1/3}}}$ \\
%			\hline
%		\end{tabular}
%	\end{center}
%	\caption{Comparison of our results with previous state of the art. We assume that $g(\cdot,\cdot)$ is smooth (i.e., has Lipschitz gradients) and $g(x,\cdot)$ is concave $\forall x \in \mathcal{X}$. Convexity, strong convexity and nonconvexity in the first column refers to $g(\cdot,y)$ for fixed $y$.}
%%	\renewcommand{\arraystretch}{1.}
%\end{table}
\begin{table}[t]
	\renewcommand{\arraystretch}{1.3}
	\begin{center}
		\label{tab:results}
		\begin{tabular}{c c c c c}
			\toprule
			\textbf{Setting} & \textbf{Optimality notion} & \textbf{\begin{tabular}{@{}c@{}} Previous \\ state-of-the-art \end{tabular}} & \textbf{Our results} & \textbf{Lower bound}\\
			\midrule
			Convex & Primal-dual gap & \begin{tabular}{@{}c@{}} $\order{k^{-1}}$~\cite{nemirovski2004prox} \end{tabular} & - & $\Omega(k^{-1})$ \cite{ouyang2018lower}\\
			
			Strongly convex & Primal-dual gap & \begin{tabular}{@{}c@{}} $\order{k^{-1}}$~\cite{nemirovski2004prox} \end{tabular} & $\otilde{{k^{-2}}}$ & $\Omega(k^{-2})$ \cite{ouyang2018lower} \\
			
			Nonconvex & \begin{tabular}{@{}c@{}} Approx.~stat.~point \end{tabular} & $\order{k^{-1/5}}$~\cite{jin2019minmax} & $\otilde{{k^{-1/3}}}$ & -\\
			\bottomrule
		\end{tabular}
	\end{center}
	\caption{Comparison of our results with previous state-of-the-art. We assume that $g(\cdot,\cdot)$ is smooth (i.e., has Lipschitz gradients) and $g(x,\cdot)$ is concave $\forall x \in \mathcal{X}$. Convexity, strong convexity and nonconvexity in the first column refers to $g(\cdot,y)$ for fixed $y$.}
\end{table}

\ifarxiv
\bigskip \noindent
\fi
% !TEX root = smooth_minimax.tex
{\bf Related works}: %\label{sec:related}
%{\color{blue} 
For strongly-convex-concave minimax problems with special structures, 
several algorithms have been proposed. 
In an increasing order of generality, 
\cite{GOS14,xu2017iteration,xu2018accelerated} 
study 
optimizing a strongly convex function with  linear constraints, 
which can be posed as a special case of minimax optimization, 
\cite{nesterov2005excessive} studies 
a minimax problem where $x$ and $y$ are connected only through a bi-linear term, 
and \cite{hamedani2018primal} and \cite{juditsky2011first} study a case where 
$g(x, \cdot )$ is linear in $y$.   
In all these cases, it is shown that 
$O(1/k^2)$ convergence rate is achievable if $g(\cdot,y)$ is strongly-convex $\forall \; y$. 
Recently, 
\cite{zhao2019optimal} provides a unified approach, that achieves $O(1/k)$ convergence rate for 
general convex-concave case and $O(1/k^2)$ 
for a special case with strongly-convex $g(\cdot,y)$ 
and linear $g(x,\cdot)$. 
%Separates out complexities for $L_{\rm xy}$, $L_{\rm xx}$, and $L_{\rm yy}$. Also studies stochastic setting.
However,  
it has remained an open question if the fast rate of $O(1/k^2)$ can be achieved for general strongly-convex-concave minimax problems. 
% which is critical in  achieving the desired fast rate in the general  {\em non-convex}  concave minimax problems that we are interested in.
%}

For nonconvex-concave minimax problems, \cite{rafique2018non} considers both deterministic and stochastic settings, and proposes inexact proximal point methods for solving smooth nonconvex--concave problems. In the deterministic setting, their result guarantees an error of $O(1/k^{1/6})$.
We note that there have also been other notions of stationarity proposed in literature for nonconvex-concave minimax problems~\cite{lu2019hybrid,nouiehed2019solving}. 
These notions however are weaker than the one considered in this paper, in the sense that, our notion of stationarity implies these other notions (without loss in parameters). For one such weaker notion,~\cite{nouiehed2019solving} proposes an algorithm with a convergence rate of $\order{1/k^{3.5}}$. Since the notion they consider is weaker, it does not imply the same convergence rate in our setting.

%\begin{itemize}
%	\item Variational inequalities
%	\item Saddle point problems
%	\item Nonconvex optimization -- smooth and nonsmooth
%	\item Stochastic methods
%\end{itemize}
%
%\textbf{Minimax optimization}: Starting from the seminal work of~\cite{neumann1928theorie}, minimax optimization has been widely studied and applied in several fields. While initial results on designing algorithms focused on the bilinear setting~\cite{robinson1951iterative}, there has been substantial amount of work generalizing it to the convex-concave setting~\cite{korpelevich1976extragradient,nemirovsky1978}. In the context of optimization, these problems arise often in constrained convex optimization~\cite{bertsekas2014constrained} and have been widely studied in that context as well.
%\textbf{Variational inequalities}:
We would also like to highlight the work on {\em variational inequalities} that are a generalization of minimax optimization problems. In particular,  monotone variational inequalities  generalizes the convex-concave minimax problems  and have applications in  solving differential equations~\cite{kinderlehrer1980introduction}. There have also been a large number of works designing efficient algorithms for finding solutions to monotone variational inequalities~\cite{bruck1977weak,nemirovsky1981,nemirovski2004prox}.

\ifarxiv
\bigskip \noindent
\fi
{\bf Notations}:  $\reals$ is the real line and for any natural number $p$, $\reals^p$ is the real vector space of dimension $p$. $\|\cdot\|$ is a norm on some metric space which would be evident from the context. For a convex set $\cX \subseteq \reals^p$ and $x \in \reals^p$, $\mathcal{P}_{\cX}(x) = \arg\min_{x' \in \cX} \|x-x'\|$ is the projection of $x$ on to $\cX$. For a differentiable function $g(x, y)$, $\nabla_x g(x, y)$ is its gradient with respect to $x$ at $(x, y)$. We use the standard big-O notations. 
For functions $T, S: \reals \to \reals$ such that  $0 < \lim\inf_{x \to \infty} T(x), \lim\inf_{x \to \infty} S(x)$, (a)  $T(x) = O(S(x))$ means $\lim\sup_{x \to \infty} T(x)/S(x) < \infty$; (b) $T(x) = \Theta(S(x))$ means $T(x) = O(S(x))$ and $S(x) = O(T(x))$; and (c) $T(x) = \otilde{S(x)}$ means that $T(x) = O(S(x)R(x))$ for some poly-logarithmic function $R: \reals \to \reals$.

\ifarxiv
\bigskip \noindent
\fi
\textbf{Paper organization}: In Section~\ref{sec:background}, we present preliminaries and all relevant background. In Section~\ref{sec:str-cvx}, we present our results for strongly-convex--concave setting and in section~\ref{sec:noncvx}, results for nonconvex--concave setting. In Section~\ref{sec:experiments}, we present empirical evaluation of our algorithm for nonconvex-concave setting and compare it to a state-of-the-art algorithm.  We conclude in Section~\ref{sec:conc}. Several technical details are presented in the appendix.
%\input{related.tex}
% !TEX root = smooth_minimax.tex
\section{Preliminaries and background material}\label{sec:background}
In this section, we will present some preliminaries, describing the setup and reviewing some background material that will be useful in the sequel.
\subsection{Minimax problems}
We are interested in the minimax problems of the form \eqref{eqn:minimax} where $g(x,y)$ is a smooth function. %solving problems of the form: 
%\begin{align}
%	\min_{x \in \mathcal{X}} \max_{y\in \mathcal{Y}} g(x,y), \label{eqn:minimax}
%\end{align}
%where $g(x,y)$ is a smooth function.
%and $\mathcal{Y}$ is a compact convex set (we do not need compactness of $\mathcal{X}$ for reasons that will become clear later).
\begin{definition}\label{def:smooth}
	A function $g(x,y)$ is said to be $L$-smooth if: 
	\begin{equation*}
	\max\left\{\norm{\nabla_x g(x,y) - \nabla_x g(x',y')}, \norm{\nabla_y g(x,y) - \nabla_y g(x',y')} \right\}  \leq L \left(\norm{x - x'} + \norm{y - y'}\right). 
	\end{equation*}
\end{definition}
Throughout, we assume that $g(x,.)$ is {\em concave} for every $x \in \mathcal{X}$. For $g(\cdot, y)$ behavior in terms of $x$, there are broadly two settings: 
\subsubsection{Convex-concave setting}
In this setting, $g(\cdot,y)$ is convex $\forall \; y \in \mathcal{Y}$. Given any $g$ and $\forall (\xhat,\yhat)$, the following holds trivially: 
\begin{align*}
	\min_{x \in \mathcal{X}} g(x,\yhat) \leq g(\xhat,\yhat) \leq \max_{y\in \mathcal{Y}} g(\xhat, y),
\end{align*}
which then implies that $\max_{y\in \mathcal{Y}} \min_{x \in \mathcal{X}} g(x,y) \leq \min_{x \in \mathcal{X}} \max_{y\in \mathcal{Y}} g(x,y)$. The celebrated minimax theorem for the convex-concave setting \cite{sion1958general} says that if  $\mathcal{Y}$ is a compact set then the above inequality is in fact an equality, i.e., $\max_{y\in \mathcal{Y}} \min_{x \in \mathcal{X}} g(x,y) = \min_{x \in \mathcal{X}} \max_{y\in \mathcal{Y}} g(x,y)$. Furthermore, any point $(x^*,y^*)$ is an optimal solution to~\eqref{eqn:minimax} if and only if: 
\begin{align}
	\min_{x \in \mathcal{X}} g(x,y^*) = g(x^*,y^*) = \max_{y\in \mathcal{Y}} g(x^*, y).
\end{align}
Hence, our goal is to find $\varepsilon$-primal-dual pair $(\xhat,\yhat)$ with small primal-dual gap: $\max_{y\in \mathcal{Y}} g(\xhat,y) - \min_{x \in \cX} g(x,\yhat)$. 
\begin{definition}\label{def:eps_fosp_cvx}
	For a convex-concave function $g: \cX \times \cY \to \reals$ %with zero duality gap: $\max_{y\in \mathcal{Y}} \min_{x \in \mathcal{X}} g(x,y) = \min_{x \in \mathcal{X}} \max_{y\in \mathcal{Y}} g(x,y)$
	, $(\hat{x}, \hat{y})$ is an $\varepsilon$-primal-dual-pair of $g$ if the primal-dual gap is less than $\varepsilon$: $\max_{y\in \mathcal{Y}} g(\xhat,y) - \min_{x \in \cX} g(x,\yhat) \leq \varepsilon$. 
\end{definition}
\subsubsection{Nonconvex-concave setting} \label{sec:nonconvex-concave-minmax}
In this setting the function $g(\cdot,y)$ need not be convex. One cannot hope to solve such problems in general, since the special case of nonconvex optimization is already NP-hard \cite{nouiehed2018convergence}. Furthermore, the minimax theorem no longer holds, i.e., $\max_{y\in \mathcal{Y}} \min_{x \in \mathcal{X}} g(x,y)$ can be strictly smaller than $\min_{x \in \mathcal{X}} \max_{y\in \mathcal{Y}} g(x,y)$.  Oftentimes the order of $min$ and $max$ might be important for a given application i.e., we might be interested only in minimax but not maximin (or vice versa). So, the primal-dual gap may not be a meaningful quantity to measure convergence. One approach, inspired by nonconvex optimization, to measure convergence is to consider the function $f(x)=\max_{y\in \mathcal{Y}} g(x,y)$ and consider the convergence rate to approximate first order stationary points (i.e., $\nabla f(x)$ is small)\cite{rafique2018non,jin2019minmax}. But as $f(x)$ could be non-smooth, $\nabla f(x)$ might not even be defined. It turns out that whenever $g(x,y)$ is smooth, $f(x)$ is weakly convex (Definition~\ref{def:weak-convex}) for which first order stationarity notions are well-studied and are discussed below. 

\bigskip\noindent
\textbf{Approximate first-order stationary point for weakly convex functions}: We first need to generalize the notion of gradient for a non-smooth function.
\begin{definition}\label{def:Frechet}
	The Fr\'echet sub-differential of a function $f(\cdot)$ at $x$ is defined as the set, $\partial f(x) = \{u \,|\,\ \underset{x' \to x}{\lim\inf} {f(x') - f(x) - \Ip{u}{x'-x}}/{\|x' - x \|} \geq 0 \}$.
\end{definition}
%Subgradient methods applied to non-smooth non-convex function $f(x)$ 
%aims to find an approximate stationary point defined with respect to its Moreau envelope.
In order to define approximate stationary points, we also need the notion of weakly convex function and Moreau envelope.
\begin{definition}\label{def:weak-convex}
	A function $f:\cX \to \reals \cup \{\infty\}$ is {\em $L$-weakly convex} if,
	\begin{align}
	f(x) + \Ip{u_x}{x'-x} - \frac{L}{2} \|x' - x\|^2  \;\; \leq \;\; f(x')\,, 
	\label{eq:weakly-cvx}
	\end{align}
	for all Fr\'echet subgradients $u_x \in \partial f(x)$.
\end{definition}
\begin{definition}\label{def:Moreau}
For a proper lower semi-continuous (l.s.c.) function $f:\cX \to \reals \cup \{\infty\}$ and $\lambda > 0$ ($\cX \subseteq \reals^p$), the {\em Moreau envelope} function is given by 
\begin{eqnarray}
f_{\lambda}(x) \;\; = \;\; \min_{x' \in \cX} f(x') + \frac1{2\lambda} \|x - x'\|^2\;. 
\label{eq:envelope}
\end{eqnarray}
\end{definition}
The following lemma provides some useful properties of the Moreau envelope for weakly convex functions. The proof can be found in Appendix~\ref{sec:proof_moreau-properties}.
\begin{lemma}\label{lem:moreau-properties}
	For an $L$-weakly convex proper l.s.c.~function $f:\cX \to \reals \cup \{\infty\}$ ($\cX = \reals^p$) such that $L < 1/\lambda$, the following hold true,
	\begin{enumerate}
		\item[(a)] The minimizer $\hat{x}_{\lambda}(x) = \arg\min_{x' \in \cX}  f(x') + \frac1{2\lambda} \|x - x'\|^2$ is unique and $f(\hat{x}_{\lambda}(x)) \leq f_{\lambda}(x) \leq f(x)$. Furthermore, $\arg\min_x f(x) = \arg\min_x f_{\lambda} (x)$.
		\item[(b)] $f_{\lambda}$ is $\big(\frac1\lambda + \frac1{\lambda(1 - \lambda L)}\big)$-smooth and thus differentiable, and
		\item[(c)] $ \min_{u \in \partial f(\hat{x}_{\lambda}(x))} \|u\| \leq  (1 / \lambda) \| \hat{x}_{\lambda}(x) - x\|  =  \| \nabla f_{\lambda}(x) \|$.
	\end{enumerate}
\end{lemma}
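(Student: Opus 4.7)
The plan is to leverage the central observation that since $f$ is $L$-weakly convex and $L < 1/\lambda$, the inner augmented objective $\phi_x(x') \defeq f(x') + \frac{1}{2\lambda}\|x - x'\|^2$ is $\bigl(\frac{1}{\lambda} - L\bigr)$-strongly convex in $x'$ (the sum of an $L$-weakly convex function and a $\frac{1}{\lambda}$-strongly convex quadratic). This single fact drives every claim in the lemma.

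For part (a), strong convexity of $\phi_x$ immediately yields uniqueness of the minimizer $\hat{x}_\lambda(x)$. The inequality $f(\hat{x}_\lambda(x)) \leq f_\lambda(x)$ is immediate since the quadratic penalty is nonnegative, while $f_\lambda(x) \leq f(x)$ follows by substituting $x'=x$ in the definition. For the coincidence of global minimizers, if $x^\star \in \arg\min f$ then $\min f \leq f(\hat{x}_\lambda(x^\star)) \leq f_\lambda(x^\star) \leq f(x^\star) = \min f$, so $x^\star$ also minimizes $f_\lambda$; conversely, if $x^\star \in \arg\min f_\lambda$, the chain $\min f \leq f(\hat{x}_\lambda(x^\star)) \leq f_\lambda(x^\star) = \min f_\lambda \leq \min f$ must collapse to equalities, which forces $\|x^\star - \hat{x}_\lambda(x^\star)\| = 0$ and thus $x^\star \in \arg\min f$.

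For part (b), I would first show that the proximal map $x \mapsto \hat{x}_\lambda(x)$ is $\frac{1}{1-\lambda L}$-Lipschitz. Writing the strong-convexity inequality for $\phi_{x_1}$ at $\hat{x}_\lambda(x_2)$ and for $\phi_{x_2}$ at $\hat{x}_\lambda(x_1)$ and summing, the cross-terms of the form $\|x_i - \hat{x}_\lambda(x_j)\|^2 - \|x_i - \hat{x}_\lambda(x_i)\|^2$ telescope to $\frac{2}{\lambda}\langle x_1 - x_2, \hat{x}_\lambda(x_1) - \hat{x}_\lambda(x_2)\rangle$, and Cauchy--Schwarz then delivers the claimed Lipschitz bound. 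Next I would establish the envelope-theorem identity $\nabla f_\lambda(x) = \frac{1}{\lambda}(x - \hat{x}_\lambda(x))$ by bracketing $f_\lambda(x+h) - f_\lambda(x)$ between an upper bound obtained by evaluating $\phi_{x+h}$ at $\hat{x}_\lambda(x)$ and a lower bound obtained by evaluating $\phi_x$ at $\hat{x}_\lambda(x+h)$, checking that the difference between the two leading linear terms is $o(\|h\|)$ via the Lipschitzness of $\hat{x}_\lambda$ just proved. The stated smoothness constant then falls out of $\nabla f_\lambda(x_1) - \nabla f_\lambda(x_2) = \frac{1}{\lambda}\bigl((x_1 - x_2) - (\hat{x}_\lambda(x_1) - \hat{x}_\lambda(x_2))\bigr)$ together with the triangle inequality.

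For part (c), the equality $\|\nabla f_\lambda(x)\| = \frac{1}{\lambda}\|\hat{x}_\lambda(x) - x\|$ is exactly the envelope identity from (b). For the first inequality, I would read off the Fr\'echet first-order optimality condition for the inner problem --- the difference quotient liminf in Definition~\ref{def:Frechet} applied to $\phi_x$ at its unique minimizer --- which yields $\frac{1}{\lambda}(x - \hat{x}_\lambda(x)) \in \partial f(\hat{x}_\lambda(x))$; this particular subgradient has norm $\frac{1}{\lambda}\|\hat{x}_\lambda(x) - x\|$, so the minimum over $\partial f(\hat{x}_\lambda(x))$ is no larger. I expect the main obstacle to be the envelope-theorem gradient identity: standard Danskin-style statements are cleanest for convex/concave outer problems on compact sets, so here I would need a direct two-sided estimate of $f_\lambda(x+h) - f_\lambda(x)$ that leans explicitly on the Lipschitzness of the proximal map; once this is pinned down, everything else is bookkeeping with strong convexity.
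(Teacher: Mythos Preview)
Your proposal is correct and covers all three parts with sound arguments. The main methodological difference from the paper lies in part (b). The paper takes a Fenchel-conjugate route: it rewrites
\[
f_\lambda(x) \;=\; \frac{\|x\|^2}{2\lambda} \;-\; \frac{1}{\lambda}\Bigl(\lambda f + \tfrac{1}{2}\|\cdot\|^2\Bigr)^{\!*}(x),
\]
observes that $\lambda f + \tfrac{1}{2}\|\cdot\|^2$ is $(1-\lambda L)$-strongly convex, and invokes the standard duality that the conjugate of a $\mu$-strongly convex function is $(1/\mu)$-smooth; adding the $\tfrac{1}{\lambda}$-smooth quadratic yields the stated constant in one line. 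Your route instead establishes Lipschitzness of the proximal map directly from strong convexity of $\phi_x$, then derives the gradient identity $\nabla f_\lambda(x) = \tfrac{1}{\lambda}(x - \hat{x}_\lambda(x))$ by a two-sided bracketing of $f_\lambda(x+h)-f_\lambda(x)$, and reads the smoothness constant off that formula via the triangle inequality. The paper's argument is shorter if one is willing to quote the strong-convexity/smoothness conjugate duality; yours is more elementary and self-contained, and it produces the explicit gradient formula along the way rather than only the smoothness bound. For part (c), the paper appeals to Danskin's theorem (using that $\phi_{\lambda,x}(x')$ is strongly convex in $x'$ and convex in $x$) to obtain $\nabla f_\lambda(x) = (x-\hat{x}_\lambda(x))/\lambda$, whereas you already have this identity from your (b) argument; your anticipation that a direct derivation sidesteps the compactness hypotheses of textbook Danskin statements is well taken. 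Part (a) is handled essentially the same way in both.
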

Now, first order stationary point of a non-smooth nonconvex function is well-defined, i.e.,  $x^*$ is a {\em first order stationary point (FOSP)} of a  function $f(x)$ if, $0 \in \partial f(x^*)$ (see Definition \ref{def:Frechet}). However, unlike smooth functions, it is nontrivial to define an \emph{approximate} FOSP. For example, if we define an $\varepsilon$-FOSP as the point $x$ with $\min_{u \in \partial f(x)} \| u \| \leq \varepsilon$, there may never exist such a point for sufficiently small $\varepsilon$, unless $x$ is exactly a FOSP. In contrast, by using above properties of the Moreau envelope of a weakly convex function, it's approximate FOSP can be defined as \cite{davis2018stochastic}: 
%\end{definition}
%\begin{definition}\label{def:eps_fosp}
%	We say that $x^*$ is an $\varepsilon$-first order stationary point ($\varepsilon$-FOFS) of the problem \eqref{prob:minimax} if,  $\min_{g\in \partial f(x^*)} \| g\| \leq \varepsilon$, where $\partial f(x)$ is the sub-differential of the function $f(x)$ at $x$.
%\end{definition}
%\noindent
%This definitions ensures that we recover the standard FOSP condition $0 \in \partial f(x)$ when $\varepsilon = 0$. A recent work showed that the stochastic sub-gradient descent on $l$-weakly convex functions takes $16 (l L (f(x) - f^*))^2/\varepsilon^4$ steps to reach an $\varepsilon$-FOSP \cite{davis2018stochastic}.
%We use the following definition for approximate FOSP which is well-defined for any weakly convex function \cite{davis2018stochastic}.
\begin{definition}\label{def:eps_fosp}
	Given an $L$-weakly convex function $f$, we say that $x^*$ is an $\varepsilon$-first order stationary point ($\varepsilon$-FOSP) if, $\| \nabla f_{\frac{1}{2L}}(x^*) \| \leq \varepsilon$, where $f_{\frac{1}{2L}}$ is the Moreau envelope with parameter $1/2L$.
\end{definition}
\noindent
Using Lemma \ref{lem:moreau-properties}, we can show that for any $\varepsilon$-FOSP $x^*$, there exists $\hat{x}$ such that $\|\hat{x} - x^* \| \leq \varepsilon/2L$ and $\min_{u\in \partial f(\hat{x})} \| u\| \leq \varepsilon$. In other words, an $\varepsilon$-FOSP is $O(\varepsilon)$ close to a point $\hat{x}$ which has a subgradient smaller than $\varepsilon$. We note that other notions of FOSP have also been proposed recently such as in~\cite{nouiehed2019solving}. However, it can be shown that an $\varepsilon$-FOSP according to the above definition is also an $\epsilon$-FOSP with \cite{nouiehed2019solving}'s definition as well, but the reverse is not necessarily true. %This definition recovers the standard FOSP condition $0 \in \partial f(x)$ when $\varepsilon = 0$. A recent work showed that the stochastic sub-gradient descent on $G$-weakly convex functions takes $16 (LG (f(x) - f^*))^2/\varepsilon^4$ steps to reach an $\varepsilon$-FOSP \cite{davis2018stochastic}.
%In the following, we ???
%
%\begin{table}[t]
%	\begin{center}
%		\begin{tabular}{|l|c | c|} 
%			\hline
%			$f(x)$: Assumption & Rate & Reference \\ [0.5ex] 
%			\hline\hline
%			&  &    \\
%			\hline \hline
%			$f(x)$:  weakly-convex  & $1/\varepsilon^4$  & \cite{davis2018stochastic} \\
%			\hline \\
%			\hline
%			$\max_i f_i(x)$: convex and $G$-smooth $f_i$ &    &  Algorithm \ref{algo:minimax_gd} (ours)\\ 
%			\hline
%			$\max_i f_i(x)$: weakly-convex $f_i$  & $LG\sqrt{\log m} /\varepsilon^3$  & Algorithm \ref{algo:minimax_gd} (ours) \\
%			\hline
%			$\max_y g(x, y)$: weakly-convex--concave $g$  & $1/\varepsilon^3$  & Algorithm \ref{algo:minimax_noncvx_cve_gd} (ours) \\
%			\hline
%		\end{tabular}
%	\end{center}
%	\caption{A}
%	\label{tbl:smooth}
%\end{table}

\subsection{Mirror-Prox}
Mirror-Prox~\cite{nemirovski2004prox} is a popular algorithm proposed for solving convex-concave minimax problems~\eqref{eqn:minimax}. It achieves a convergence rate of $\order{1/k}$ for the primal dual gap. The original Mirror-Prox paper~\cite{nemirovski2004prox} motivates the algorithm through a \emph{conceptual} Mirror-Prox (CMP) method, which brings out the main idea behind its convergence rate of $\order{1/k}$. CMP does the following update:
\begin{align}
(x_{k+1},y_{k+1}) = \left(x_{k},y_{k}\right) + \frac{1}{\beta} \left(-\nabla_x g\left(x_{\bm{k+1}},y_{\bm{k+1}}\right), \nabla_y g\left(x_{\bm{k+1}},y_{\bm{k+1}}\right)\right). \label{eqn:cmp}
\end{align}
%Algorithm~\ref{algo:MP} presents pseudocode of CMP.
The main difference between CMP and standard gradient descent ascent (GDA) is that in the $k^{\textrm{th}}$ step, while GDA uses gradients at $(x_k,y_k)$, CMP uses gradients at $(x_{k+1},y_{k+1})$. The key observation of~\cite{nemirovski2004prox} is that if $g(\cdot,\cdot)$ is smooth, it can be implemented efficiently. CMP is analyzed as follows:\\
{\bf Implementability of CMP}: 
%\begin{itemize}
%	\item \textbf{Implementability of CMP}: 
Let $(x_{k}^{(0)},y_{k}^{(0)}) = (x_k,y_k)$. For $\beta < \frac{1}{L}$, the iteration
\begin{align}
(x_{k}^{(i+1)},y_{k}^{(i+1)}) = \left(x_{k},y_{k}\right) + \frac{1}{\beta} \left(-\nabla_x g\left(x_{k}^{(i)},y_{k}^{(i)}\right), \nabla_y g\left(x_{k}^{(i)},y_{k}^{(i)}\right)\right). \label{eqn:fp-iteration}
\end{align}
can be shown to be $\frac{1}{\sqrt{2}}$-contraction (when $g(\cdot,\cdot)$ is smooth) and that its fixed point is $\left(x_{k+1},y_{k+1}\right)$. So, in $\log\frac{1}{\epsilon}$ iterations of~\eqref{eqn:fp-iteration}, we can obtain an accurate version of the update required by CMP. In fact, \cite{nemirovski2004prox} showed that just \emph{two} iterations of~\eqref{eqn:fp-iteration} suffice.\\ %This is the eventual Mirror-Prox algorithm\cite{nemirovski2004prox}.\\
	%\item 
	\textbf{Convergence rate of CMP}: Using CMP update with simple manipulations leads to the following: %for any $x\in \mathcal{X}$ and $y \in \mathcal{Y}$, we have
	{\small 
	\begin{align*}
		g(x_{k+1},y) - g(x,y_{k+1}) \leq \beta\left(\norm{x-x_k}^2 - \norm{x-x_{k+1}}^2 + \norm{y-y_k}^2 - \norm{y-y_{k+1}}^2\right), \forall x\in \mathcal{X},\ y \in \mathcal{Y}.
	\end{align*}}
  $\order{1/k}$ convergence rate follows easily using the above result.
%\end{itemize}
%\begin{algorithm}[t]
%	\DontPrintSemicolon % Some LaTeX compilers require you to use \dontprintsemicolon instead
%	\KwIn{ Smooth function $g(x,y)$, learning rate $\frac{1}{\beta}$, initial point $x_0$}
%	\KwOut{$x_k$}
%	\For{$k = 0, 1, \ldots$} {
%		$\left(x_{k+1},y_{k+1}\right) \leftarrow \left(x_{k},y_{k}\right) + \frac{1}{\beta} \left(-\nabla_x g\left(x_{k+1},y_{k+1}\right), \nabla_y g\left(x_{k+1},y_{k+1}\right)\right)$
%	}
%	\caption{Conceptual Mirror-Prox}
%	\label{algo:MP}
%\end{algorithm}

Finally, our method and analysis also requires Nesterov's accelerated gradient descent method (see Algorithm~\ref{algo:AGD} in Appendix A)and it's per-step analysis by \cite{bansal2017potential} (Lemma~\ref{lem:agd-pf} in Appendix~\ref{sec:nesterov}). 
%\input{moreau.tex}
% ====================================================================================

% ============================================================================================================
% !TEX root = ./smooth_minimax.tex
\section{Strongly-convex concave saddle point problem}\label{sec:str-cvx}
We first study  the minimax problem of the form: 
\begin{align}
\min_{x \in \cX} \; [\; f(x) =  \max_{y \in \cY} g(x, y) \;]\;, 
\tag{P1}\label{prob:cvx-smooth-minimax}
\end{align}
where $g(x, \cdot)$ is concave, $g(\cdot, y)$ is $\sigma$-{\em strongly-convex}, $g(\cdot, \cdot)$ is $L$-smooth, i.e., $0 < \sigma \leq L$.  $\cX = \reals^p$ and $\cY \subset \reals^q$ is a convex compact sub-set of $\reals^q$ and let the function $f$ take a minimum value $f^*$($> -\infty$). Let $D_\cY=\max_{y,y' \in \cY} \|y- y'\|$ be the diameter of $\cY$. 

Our objective here is to find an $\epsilon$-primal-dual pair $(\xhat,\yhat)$ (see Definition~\ref{def:eps_fosp_cvx}). Now the fact that $f(\hat{x}) - f^* \leq \max_{y\in \mathcal{Y}} g(\xhat,y) - \min_{x \in \cX} g(x,\yhat)$ implies that if $(\hat{x}, \hat{y})$ is an $\varepsilon$-primal-dual-pair, then $\hat{x}$ is also an $\varepsilon$-approximate minima of $f$. 
% defined as follows. 
%which is more natural and well-defined for convex functions.  
%by the following more natural definition \ref{def:eps_fosp_cvx}.
%\begin{definition}\label{def:eps_fosp_cvx}
%	For a convex function $f$, we say that $x^*$ is an $\varepsilon$-(convex) first order stationary point ($\varepsilon$-CVX-FOSP) of $f$ if, $f(x^*) - f^* \leq \varepsilon$. 
%\end{definition}
% If $x^*$ is an $\varepsilon$-CVX-FOSP then $x^*$ is a $\sqrt{\ell \varepsilon}$-FOSP (Definition \ref{def:eps_fosp}) of this problem for any $\ell > 0$ (as $f$ is 0-weakly convex), since $f(x) - f^* \geq  f_{\frac1{\ell}}(x) - f^*  \geq \frac1{\ell\\} \|\nabla f_{\frac1{\ell}} (x)\|^2$ (Lemma \ref{lem:moreau-properties} (a, b)). 
Furthermore, by Sion's minimax theorem \cite{komiya1988elementary}, strong-convexity--concavity of $g(\cdot, \cdot)$ ensures that: 
$\min_x [f(x) \defeq \max_y g(x, y)] = \max_y [h(y) \defeq \min_x g(x, y)]$. Hence, one approach to efficiently solving the problem is by optimizing the dual problem $\max_y h(y)$.  By Lemma \ref{lem:outer-max-smooth}, $h(y)$ is an $L(1 + L/\sigma)$-smooth function. So we can use AGD to ensure that $h(y_k)-h(y^*) = O(1/k^2)$. Now, each step of AGD requires computing $\arg\min_x g(x, y_k)$ which can be done efficiently (i.e., logarithmic number of steps)
%achieved in $\log(1/\varepsilon)$ steps using standard gradient descent, 
as $g(\cdot, y_k)$ is strongly-convex and smooth. So, the overall first-order oracle complexity is $h(y_k)-h(y^*) = \otilde{1/k^2}$.
% is $O(\log(1/\varepsilon)/\sqrt{\varepsilon})$. 
\begin{lemma} \label{lem:outer-max-smooth}
	For a $\sigma$-strongly-convex--concave $L$-smooth function $g(\cdot,\cdot)$, 
	$h(u) = \min_{x \in \cX} g(x, u)$ is an $L\big(1+ \frac{L}\sigma \big)$-smooth concave function.
\end{lemma}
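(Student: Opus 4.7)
The plan is to use the envelope/Danskin theorem to get a clean formula for $\nabla h$ and then bound the Lipschitz constant via strong convexity of $g(\cdot,u)$.

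First, I would verify concavity: since every $g(x,\cdot)$ is concave, $h(u)=\min_{x\in\cX}g(x,u)$ is a pointwise infimum of concave functions, hence concave. For the smoothness claim, let $x^*(u)\defeq\arg\min_{x\in\cX}g(x,u)$, which exists and is unique since $g(\cdot,u)$ is $\sigma$-strongly convex (and $\cX=\reals^p$, so the minimizer is characterized by $\nabla_x g(x^*(u),u)=0$). By Danskin's theorem for strongly convex inner problems, $h$ is differentiable with $\nabla h(u)=\nabla_y g(x^*(u),u)$.

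Next I would control $\|x^*(u_1)-x^*(u_2)\|$. Using the first-order optimality condition $\nabla_x g(x^*(u_i),u_i)=0$ together with $\sigma$-strong convexity of $g(\cdot,u_1)$:
\begin{equation*}
\sigma\|x^*(u_1)-x^*(u_2)\|\leq\|\nabla_x g(x^*(u_2),u_1)-\nabla_x g(x^*(u_1),u_1)\|=\|\nabla_x g(x^*(u_2),u_1)\|.
\end{equation*}
Then the $L$-smoothness of $g$ (Definition~\ref{def:smooth}, applied with $x'=x^*(u_2)$, $y$-slots $u_1$ vs $u_2$) gives $\|\nabla_x g(x^*(u_2),u_1)\|=\|\nabla_x g(x^*(u_2),u_1)-\nabla_x g(x^*(u_2),u_2)\|\leq L\|u_1-u_2\|$, so that $x^*(\cdot)$ is $(L/\sigma)$-Lipschitz.

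Finally I would combine these ingredients. Applying the $L$-smoothness of $g$ once more,
\begin{equation*}
\|\nabla h(u_1)-\nabla h(u_2)\|=\|\nabla_y g(x^*(u_1),u_1)-\nabla_y g(x^*(u_2),u_2)\|\leq L\bigl(\|x^*(u_1)-x^*(u_2)\|+\|u_1-u_2\|\bigr),
\end{equation*}
and substituting the Lipschitz bound on $x^*(\cdot)$ yields $\|\nabla h(u_1)-\nabla h(u_2)\|\leq L\bigl(1+L/\sigma\bigr)\|u_1-u_2\|$, as desired. The only mildly delicate step is justifying the Danskin formula for $\nabla h$; since $g(\cdot,u)$ is strongly convex on $\reals^p$ and $g$ is $C^1$, the standard proof goes through without friction, so there is no real obstacle.
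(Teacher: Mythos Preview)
Your proposal is correct and follows essentially the same approach as the paper: both use Danskin's theorem to obtain $\nabla h(u)=\nabla_y g(x^*(u),u)$, establish the $(L/\sigma)$-Lipschitz continuity of $x^*(\cdot)$ via strong convexity and smoothness, and then combine this with $L$-smoothness of $g$ to bound $\|\nabla h(u_1)-\nabla h(u_2)\|$. The only cosmetic differences are that the paper proves the Lipschitz bound on $x^*$ via the inner-product form of strong convexity plus Cauchy--Schwarz (its Eq.~\eqref{eq:mp-lipschitz-2}) and applies smoothness via a triangle-inequality split, whereas you invoke the norm form of strong convexity and Definition~\ref{def:smooth} directly.
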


So does this simple approach give us our desired result? Unfortunately that is not the case, as the above bound on the dual function $h$ does not translate to the same error rate for primal function $f$, i.e., the solution need not be $\otilde{1/k^2}$-primal-dual pair. E.g., consider $ \min_{x \in \reals } \max_{y \in [-1, 1]} [g(x, y) = xy + x^2/2]$, where $\min_{x} \max_{y} g(x, y) = 0$, $f(x) = x^2/2 + |x|$ and $h(y) = -y^2/2$. If $h(y_k) = \Theta(k^{-2})$, then $x_k \in \argmin_x g(x, y_k) = \Theta(1/k)$ and so $f(x_k)$ is $\Theta(k^{-1})$. 

%A straight-forward but flawed procedure to solve this problem  
%is to solve the dual problem. 
%The inner strongly-convex minimization can be solved in linear $\log(1/\varepsilon)$ time and the outer $L(1 + L/\sigma)$-smooth (Lemma \ref{lem:outer-max-smooth}) concave maximization can get an accelerated convergence of $h(y_k) = O(k^{-2})$ after $k$ iterations. 
 %However, this guarantee on the dual function $h$ does not translate to the same order of convergence for the primal function $f$. As a counter example, 

%Since $g$ is a strongly-convex--concave function, by the following lemma $h(u)$ is smooth.

Instead of using AGD, we introduce a new method to solve the dual problem that we refer to as \mpscc, which stands for Dual Implicit Accelerated Gradient. \mpscc combines ideas from AGD \cite{nesterov1983method} and 
Nemirovski's original derivation of the Mirror-Prox algorithm \cite{nemirovski2004prox}, and can ensure a fast convergence rate of $\tilde{O}(k^{-2})$ for the  primal-dual gap. For better exposition, we first present a conceptual version of DIAG (\cmpscc), which is not implementable \emph{exactly}, but brings out the main new ideas in our algorithm. We then present a detailed error analysis for the \emph{inexact} version of this algorithm, which is implementable.
\subsection{Conceptual version: \cmpscc}
The pseudocode for \cmpscc algorithm is presented in Algorithm~\ref{algo:conceptual_minimax_stronglycvx_cve_agd}. The main idea of the algorithm is in Step 4, where we simultaneously find $x_{k+1}$ and $y_{k+1}$ satisfying the following requirements:
\begin{itemize}
	\item $x_{k+1}$ is the minimizer of $g(\cdot,y_{k+1})$, and
	\item $y_{k+1}$ corresponds to an AGD step (see Algorithm~\ref{algo:AGD} in Appendix~\ref{sec:nesterov}) for $g(x_{k+1},\cdot)$
\end{itemize}
\begin{algorithm}[t]
	\DontPrintSemicolon % Some LaTeX compilers require you to use \dontprintsemicolon instead
	\SetKwFunction{FMPSCC}{MP-SCC}
	\SetKwFunction{FMPSTEP}{Imp-STEP}
	\KwIn{$g$, $L$, $\sigma$, $x_0$, $y_0$, $K$}
	\KwOut{$\bar{x}_{K}, y_{K}$}
	\SetKwProg{Fn}{}{:}{}
	%	\Fn{\FMPSCC{$g$, $L$, $\sigma$, $x_0$, $y_0$, $K$, $\{\varepsilon_{\rm step}^{(k)}\}_{k=1}^K$}}
	{
		Set $\beta \leftarrow 2\frac{L^2}\sigma$, $z_0 \leftarrow y_0$\;
		\For{$k = 0, 1, \ldots, K-1$} {
			$\tau_k \leftarrow \frac2{(k+2)}$, $\ \eta_k \leftarrow \frac{(k+1)}{2\beta} $, $\ w_{k} \leftarrow (1-\tau_{k}) y_k + \tau_k z_k$\;
			Choose $x_{k+1}, y_{k+1}$ ensuring: 
			\begin{equation*}		
			g(x_{k+1}, y_{k+1}) = \min_x g(x, y_{k+1}), \ \ 
			y_{k+1} = \PY{w_k + \frac1\beta \nabla_y g(x_{k+1}, w_k)}
			\end{equation*}\\
			$z_{k+1} \leftarrow \PY{z_k + \eta_k \nabla_y g(x_{k+1}, w_k)}$, $\ \ \bar{x}_{k+1} \leftarrow \frac{2}{(k+1)(k+2)}\sum_{i=1}^{k+1} i\cdot x_i$
		}
		\KwRet{$\bar{x}_{K}, y_{K}$}\;
	}
	\caption{Conceptual Dual Implicit Accelerated Gradient (\cmpscc{}) for strongly-convex--concave programming}
	\label{algo:conceptual_minimax_stronglycvx_cve_agd}
\end{algorithm}
\textbf{Implementability}:
The first question is whether it is easy enough to implement such a step? It turns out that it is indeed possible to quickly find points $x_{k+1}$ and $y_{k+1}$ that approximately satisfy the above requirements. The reason is that:
\begin{itemize}
	\item Since $g(\cdot,y)$ is smooth and strongly convex for every $y \in \mathcal{Y}$, we can find $\epsilon$-approximate minimizer for a given $y$ in $\order{\log \frac{1}{\epsilon}}$ iterations.
	\item Let $x^*(y) \defeq \argmin_{x\in \mathcal{X}} g(x,y)$. The iteration $y^{i+1} = \PY{w_k + \frac{1}{\beta} \nabla_y g(x^*(y^i),w_k)}$ is a $1/{2}$-contraction with a unique fixed point satisfying the update step requirements (i.e., Step $4$ of Algorithm~\ref{algo:conceptual_minimax_stronglycvx_cve_agd}). See Lemma~\ref{lem:mp-step} in Appendix~\ref{sec:minimax_stronglycvx_cve_agd_proof} for a proof. This means that only $\order{\log \frac{1}{\epsilon}}$ iterations again suffice to find an update that approximately satisfies the requirements.
\end{itemize}

\noindent
\textbf{Convergence rate}:
Since $y_{k+1}$ and $z_{k+1}$ correspond to an AGD update for $g(x_{k+1},\cdot)$, we can use the potential function decrease argument for AGD (Lemma~\ref{lem:agd-pf} in Appendix~\ref{sec:nesterov}) to conclude that $\forall y \in \mathcal{Y}$,
\begin{align*}
	&(k+1)(k+2) \left(g(x_{k+1},y) - g(x_{k+1},y_{k+1})\right) + 2\beta \cdot \norm{y - z_{k+1}}^2 \\
	&\leq k(k+1) \left(g(x_{k+1},y) - g(x_{k+1},y_{k})\right) + 2\beta \cdot \norm{y - z_{k}}^2 \\
	&\leq k(k+1) \left(g(x_{k+1},y) - g(x_k,y)\right) + k(k+1) \left(g(x_{k},y) - g(x_{k},y_{k})\right) + 2\beta \cdot \norm{y - z_{k}}^2,
\end{align*}
where the last step follows from the fact that $x_{k} = \argmin_x g(x, y_{k})$ and so $g(x_{k},y_{k}) \leq g(x_{k+1},y_{k})$. Noting that we can further recursively bound $k(k+1) \left(g(x_{k},y) - g(x_{k},y_{k})\right) + 2\beta \cdot \norm{y - z_{k}}^2$ as above, we obtain
\begin{align*}
	&\quad (k+1)(k+2) \left(g(x_{k+1},y) - g(x_{k+1},y_{k+1})\right) + 2\beta \cdot \norm{y - z_{k+1}}^2 \\
	&\quad \leq k(k+1) g(x_{k+1},y) - \sum_{i=1}^k (2i)\cdot g(x_i,y) + 2\beta \cdot \norm{y - z_{0}}^2 \\
	&\Rightarrow \sum_{i=1}^{k+1} (2i)\cdot g(x_i,y) - (k+1)(k+2) g(x_{k+1},y_{k+1}) \leq 2\beta \cdot \norm{y - z_{0}}^2.
\end{align*}
Since $g(x_{k+1},y_{k+1}) \leq g(x,y_{k+1})$ for every $x\in\mathcal{X}$, we have
\begin{align*}
	&\sum_{i=1}^{k+1} (2i)\cdot g(x_i,y) - (k+1)(k+2) g(x,y_{k+1}) \leq 2\beta \cdot \norm{y - z_{0}}^2 \\
	&\Rightarrow g(\bar{x}_{k+1},y) - g(x,y_{k+1}) \leq \frac{2\beta \cdot \norm{y - z_{0}}^2}{(k+1)(k+2)},
\end{align*}
where $\bar{x}_{k+1} \defeq \frac{1}{(k+1)(k+2)}\sum_{i=1}^{k+1} (2i)\cdot x_i$.
Since $x$ and $y$ are arbitrary above, this gives a $\order{1/k^2}$ convergence rate for the primal dual gap.
\subsection{Error analysis}
The main issue with Algorithm~\ref{algo:conceptual_minimax_stronglycvx_cve_agd} is that the update step is not exactly implementable. However, as we noted in the previous section, we can quickly find updates that almost satisfy the requirements. Algorithm~\ref{algo:minimax_stronglycvx_cve_agd} presents this inexact version. The following theorem states our formal result and a detailed proof is provided in Appendix \ref{sec:minimax_stronglycvx_cve_agd_proof}. 
%A proof is provided in Appendix \ref{sec:outer-max-smooth_proof}.
%We introduce a mirror-prox algorithm for strongly-convex--concave minmax problem (\mpscc{}) which has a fast convergence rate of $\tilde{O}(\varepsilon^{-1/2})$. 
%dual problem, but with adopting potential-function based proof of \cite{bansal2017potential}
%A cursory glance of the \mpscc{} (Algorithm \ref{algo:minimax_stronglycvx_cve_agd}) reveals that it is a modified version of accelerated gradient descent \cite[Eqns. 5.47-5.49]{bansal2017potential} on some function of $y$ with a modified step given by \mpstep{}, which is inspired from the conceptual prox-method of \cite{nemirovski2004prox}. In the following lemma we analyze the \mpstep{} sub-routine, which is the most non-trivial step of the algorithm. 

\begin{algorithm}[t]
	\DontPrintSemicolon % Some LaTeX compilers require you to use \dontprintsemicolon instead
	\SetKwFunction{FMPSCC}{MP-SCC}
	\SetKwFunction{FMPSTEP}{Imp-STEP}
	\KwIn{$g$, $L$, $\sigma$, $x_0$, $y_0$, $K$, $\{\varepsilon_{\rm step}^{(k)}\}_{k=1}^K$}
	\KwOut{$\bar{x}_{K}, y_{K}$}
	\SetKwProg{Fn}{}{:}{}
%	\Fn{\FMPSCC{$g$, $L$, $\sigma$, $x_0$, $y_0$, $K$, $\{\varepsilon_{\rm step}^{(k)}\}_{k=1}^K$}}
	{
	Set $\beta \leftarrow 2\frac{L^2}\sigma$, $z_0 \leftarrow y_0$\;
	\For{$k = 0, 1, \ldots, K-1$} {
		$\tau_k \leftarrow \frac2{(k+2)}$, $\ \eta_k \leftarrow \frac{(k+1)}{2\beta} $, $\ w_{k} \leftarrow (1-\tau_{k}) y_k + \tau_k z_k$\;
		$x_{k+1}, y_{k+1} \leftarrow $ \FMPSTEP{$g$, $L$, $\sigma$, $x_0$, $w_k$, $\beta$, $\varepsilon^{(k+1)}_{\rm step}$}, ensuring: 
		\begin{equation*}		
		g(x_{k+1}, y_{k+1}) \leq \min_x g(x, y_{k+1}) + \varepsilon^{(k+1)}_{\rm step}, \ \ 
		y_{k+1} = \PY{w_k + \frac1\beta \nabla_y g(x_{k+1}, w_k)}
		\end{equation*}\\
		$z_{k+1} \leftarrow \PY{z_k + \eta_k \nabla_y g(x_{k+1}, w_k)}$, $\ \ \bar{x}_{k+1} \leftarrow \frac{2}{(k+1)(k+2)}\sum_{i=1}^{k+1} i\cdot x_i$
	}
	\KwRet{$\bar{x}_{K}, y_{K}$}\;
	}
	%	\caption{Mirror-prox for strongly-convex--concave programming}
	%	\label{algo:minimax_stronglycvx_cve_agd}
	%\end{algorithm}
	\vspace{0.25cm}
	%\begin{algorithm}[]
	%	\DontPrintSemicolon % Some LaTeX compilers require you to use \dontprintsemicolon instead
	%	%\KwIn{$g$, $G$, $\sigma$, $w$, $\varepsilon$}
	%	\SetKwFunction{FMPSTEP}{MP-STEP}
	%	\SetKwProg{Fn}{}{:}{}
	\Fn{\FMPSTEP{$g$, $L$, $\sigma$, $x_0$, $w$, $\beta$, $\varepsilon_{\rm step}$}}{
		Set $\varepsilon_{\rm mp} \leftarrow \frac{2 \sigma}{5 L}\sqrt{\frac{2 \varepsilon_{\rm step}}{L}}$, $R \leftarrow \ceil{\log_2 \frac{2D_\cY}{\varepsilon_{\rm mp}}}$, $\varepsilon_{\rm agd} \leftarrow \frac{\sigma \beta^2 \varepsilon_{\rm mp}^2}{32 L^2}$, 
		$y_0 \leftarrow w$\;
		\For{$r = 0, 1, \ldots, {R}$} {
			Starting at $x_0$ use  AGD (Algorithm~\ref{algo:AGD} with $-g(\cdot, y_r)$)  to compute $x_r$ such that: % $\sigma$-strongly convex $L$-smooth function \cite{nesterov1983method} find $\hat{x}_r$ such that,
			\begin{equation}
			g(\hat{x}_r, y_r) \leq \min_x g(x, y_r) + \varepsilon_{\rm agd},\ \
			%y_{r+1} \leftarrow \PY{w + \frac1\beta \nabla_y g(\hat{x}_{r}, w)}  
			\label{eq:mp-step-agd-error} %label{eq:mp-step-y-step}
			\end{equation}\\
%			\vspace{-2.em}
%			\begin{equation}
			$y_{r+1} \leftarrow \mathcal{P}_{\cY} \big(w + \frac1\beta \nabla_y g(\hat{x}_{r}, w)\big)$ %
%			\label{eq:mp-step-y-step}
%			\end{equation}
		}
		\KwRet{$\hat{x}_{R}$, $y_{R+1}$}
	}
	%\caption{Mirror-prox step}
	%\label{algo:mirror_prox_step}

	\caption{Dual Implicit Accelerated Gradient (\mpscc{}) for strongly-convex--concave programming}
	\label{algo:minimax_stronglycvx_cve_agd}
\end{algorithm}

\begin{theorem}[Convergence rate of  \mpscc]
	\label{thm:minimax_stronglycvx_cve_agd}
	Let $g: \cX \times \cY \to \reals$  be a $L$-smooth, $\sigma$-strong-convex--concave function on  $\cX = \reals^p$ and a convex compact sub-set $\cY \subset \reals^q$. Then, after $K$ iterations, \mpscc (Algorithm \ref{algo:minimax_stronglycvx_cve_agd}) finds $(\bar{x}_{K}, y_K)$ s.t.: 
	\begin{align}
	\max_{\ty \in \cY} g(\bar{x}_{K}, \ty) -  \min_{\tx \in \cX} g(\tx, y_K) 
	&\leq \frac{4\frac{L^2}{\sigma} D_{\cY}^2 + \sum_{k=1}^K k(k+1)\,\varepsilon_{\rm step}^{(k)}}{K(K+1)}\,.
	\end{align}
	In particular, 
%	when $\varepsilon^{(k)}_{\rm step} = \frac1{k^2 (k+1)}$ we have,
%	\begin{align}
%	\max_{\ty \in \cY} g(\bar{x}_{K}, \ty) -  \min_{\tx \in \cX} g(\tx, y_K) 
%	&\leq \frac{4\frac{L^2}{\sigma} D_{\cY}^2 + \ln K + 1}{K(K+1)}\, \text{, and, }
%	\end{align}
%	and 
	setting $\varepsilon^{(k)}_{\rm step} = \frac{{L^2}D_{\cY}^2}{{\sigma} k^3 (k+1)}$ we have: $\max_{\ty \in \cY} g(\bar{x}_{K}, \ty) -  \min_{\tx \in \cX} g(\tx, y_K) \leq \frac{6\frac{L^2}{\sigma} D_{\cY}^2}{K(K+1)}$. 
%	\begin{align}
%	\max_{\ty \in \cY} g(\bar{x}_{K}, \ty) -  \min_{\tx \in \cX} g(\tx, y_K) 
%	&\leq \frac{6\frac{L^2}{\sigma} D_{\cY}^2}{K(K+1)}\,,
%	\end{align}
Furthermore, for this setting the total first order oracle complexity is given by: ${O}(\sqrt{\frac{L}\sigma} K \log^2 (K))$.%	with total number of gradient computations per iteration being at most be $K \ceil[\bigg]{\log_2 5 K^2 \sqrt{\frac{L}\sigma}} \, 	O\Big(\sqrt{\frac{L}\sigma} \log \big(K^4 \big)\Big) = O(\sqrt{\frac{L}\sigma} K \log^2 (K))$.	
\end{theorem}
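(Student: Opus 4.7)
My plan is to mirror the conceptual analysis sketched in the preceding pages, treating the inexactness as a controlled perturbation, and to separately account for the oracle cost of the Imp-STEP subroutine.

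First, I would establish a per-iteration potential decrease. Applying the AGD potential bound (Lemma~\ref{lem:agd-pf}) to the concave function $g(x_{k+1},\cdot)$ with the iterates $(w_k,y_{k+1},z_{k+1})$ generated by the algorithm and the given $(\tau_k,\eta_k)$, one obtains, for every $y\in\cY$,
\[
(k+1)(k+2)\bigl[g(x_{k+1},y)-g(x_{k+1},y_{k+1})\bigr] + 2\beta\|y-z_{k+1}\|^2 \leq k(k+1)\bigl[g(x_{k+1},y)-g(x_{k+1},y_k)\bigr] + 2\beta\|y-z_k\|^2.
\]
The only place the conceptual proof uses $x_k=\argmin_x g(x,y_k)$ is the inequality $g(x_k,y_k)\leq g(x_{k+1},y_k)$; with Imp-STEP guaranteeing $g(x_k,y_k)\leq\min_x g(x,y_k)+\varepsilon_{\rm step}^{(k)}$, this becomes $g(x_{k+1},y_k)\geq g(x_k,y_k)-\varepsilon_{\rm step}^{(k)}$. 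Plugging this in and adding/subtracting $g(x_k,y)$ gives the inexact potential recursion
\[
\Psi_{k+1}(y) \leq \Psi_k(y) + k(k+1)\bigl[g(x_{k+1},y)-g(x_k,y)\bigr] + k(k+1)\varepsilon_{\rm step}^{(k)},
\]
with $\Psi_k(y):=k(k+1)[g(x_k,y)-g(x_k,y_k)]+2\beta\|y-z_k\|^2$ and $\Psi_0(y)=2\beta\|y-z_0\|^2$ (since $A_0=0$).

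Second, I would telescope from $k=0$ to $K-1$, apply Abel summation to the $g(x_{k+1},y)-g(x_k,y)$ differences (the weights $A_k=k(k+1)$ satisfy $A_k-A_{k-1}=2k$), and then replace $g(x_K,y_K)$ by $g(x,y_K)+\varepsilon_{\rm step}^{(K)}$ for arbitrary $x\in\cX$. Convexity of $g(\cdot,y)$ applied to $\bar{x}_K=\tfrac{2}{K(K+1)}\sum_{k=1}^{K}k\,x_k$ gives $\sum_{k=1}^{K}2k\,g(x_k,y)\geq K(K+1)\,g(\bar{x}_K,y)$, and $\|y-z_0\|\leq D_\cY$ (with $z_0=y_0\in\cY$) controls the initial potential. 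Taking max over $y$ and min over $x$ yields
\[
\max_{y\in\cY} g(\bar{x}_K,y)-\min_{x\in\cX} g(x,y_K) \leq \frac{4(L^2/\sigma)D_\cY^2+\sum_{k=1}^K k(k+1)\varepsilon_{\rm step}^{(k)}}{K(K+1)}.
\]
Setting $\varepsilon_{\rm step}^{(k)}=\tfrac{L^2 D_\cY^2}{\sigma k^3(k+1)}$ collapses the error sum to $\tfrac{L^2 D_\cY^2}{\sigma}\sum_{k\geq 1}\tfrac{1}{k^2}<2\tfrac{L^2 D_\cY^2}{\sigma}$, producing the claimed $6(L^2/\sigma)D_\cY^2/(K(K+1))$ bound.

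Third, for the first-order oracle cost I would analyze Imp-STEP. The fixed-point map $y\mapsto\PY{w+\tfrac{1}{\beta}\nabla_y g(x^*(y),w)}$, with $x^*(y)=\argmin_x g(x,y)$, is a $1/2$-contraction (this is stated right after the description of \cmpscc{} and justified by smoothness together with $\sigma$-Lipschitzness of $x^*$). Replacing $x^*(y_r)$ by an $\varepsilon_{\rm agd}$-accurate AGD output $\hat{x}_r$ perturbs the map by $O((L/\sigma)\sqrt{\varepsilon_{\rm agd}/L})$, and $R=\lceil\log_2(2D_\cY/\varepsilon_{\rm mp})\rceil$ outer iterations drive the distance to the exact fixed point below $\varepsilon_{\rm mp}$. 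Using smoothness and $\sigma$-strong convexity of $g(\cdot,y_{R+1})$ around its minimizer, the $\varepsilon_{\rm mp}$ accuracy in $y$-space converts to a function-value suboptimality of order $\varepsilon_{\rm step}$ for $\hat{x}_R$ at $y_{R+1}$, via the algorithm's calibration $\varepsilon_{\rm mp}=(2\sigma/5L)\sqrt{2\varepsilon_{\rm step}/L}$. Each inner AGD call uses $O(\sqrt{L/\sigma}\log(1/\varepsilon_{\rm agd}))$ gradient evaluations. For the chosen $\varepsilon_{\rm step}^{(k)}\sim 1/k^4$ one has $\varepsilon_{\rm mp}\sim 1/k^2$ and $\varepsilon_{\rm agd}\sim 1/k^4$, so $R=O(\log k)$ and each outer iteration costs $O(\sqrt{L/\sigma}\log^2 k)$; summing over $k=1,\ldots,K$ gives $O(\sqrt{L/\sigma}\,K\log^2 K)$.

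The hard part will be this third step: cleanly propagating the three accuracy scales $\varepsilon_{\rm step}, \varepsilon_{\rm mp}, \varepsilon_{\rm agd}$ through Imp-STEP, in particular verifying that the $1/2$-contraction survives the AGD perturbation with the prescribed tolerance, and then converting fixed-point accuracy in $y$-space into function-value suboptimality with the right constants. Once the inexactness is parameterized, the outer telescoping argument is a direct extension of the conceptual analysis already given in the excerpt.
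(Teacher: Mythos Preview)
Your proposal is correct and follows essentially the same route as the paper's proof: the outer argument applies the AGD potential decrease (Lemma~\ref{lem:agd-pf}) to $g(x_{k+1},\cdot)$, absorbs the inexact minimization as an additive $k(k+1)\varepsilon_{\rm step}^{(k)}$ term, telescopes, and uses convexity for the weighted average $\bar{x}_K$; the inner argument (Lemma~\ref{lem:mp-step} in the paper) establishes the $1/2$-contraction of the exact fixed-point map via the $L/\sigma$-Lipschitzness of $x^*(\cdot)$, bounds the additive perturbation from the AGD error, and converts $y$-space accuracy to function-value suboptimality through the prescribed $\varepsilon_{\rm mp},\varepsilon_{\rm agd}$ calibration. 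One small slip: you wrote ``$\sigma$-Lipschitzness of $x^*$'' where it should be $L/\sigma$-Lipschitzness.
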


\bigskip
\noindent
{\bf Remark 1}: Theorem~\ref{thm:minimax_stronglycvx_cve_agd} shows that~\mpscc needs $\tilde{O}(({L}/{\sigma})\cdot ({\sqrt{L}D_{\cY}}/{\sqrt{\varepsilon}}))$ gradient queries for finding a $\varepsilon$-primal-dual-pair, while current best-known rate is $O({1}/{\varepsilon})$ achieved by Mirror-Prox. 
%\textcolor{blue}{
This dependence in $\varepsilon$ and $D_{\cY}$ is optimal, as 
it is shown in \cite[Theorem 10]{ouyang2018lower} that $\Omega(D_{\cY} (L-\sigma) / \sqrt{\sigma \varepsilon})$ gradient queries are  necessary to achieve $\varepsilon$ error in the primal-dual gap.
%}

\bigskip
\noindent
{\bf Remark 2}: Unlike standard AGD for $h(y)$, which only updates $y_k$ in the outer-loop, \mpscc's outer-step updates both $x_k$ and $y_k$ thus allowing us to better track the primal-dual gap. However, \mpscc's dependence on the condition number ${L}/{\sigma}$ seems sub-optimal and can perhaps be improved if we do not compute \mpstep nearly optimally allowing for inexact updates; we leave further investigation into improved dependence on the condition number for future work. 

%
%\begin{definition}\label{def:smooth2}
%	A function $g(x,y)$ is said to be $(L_{xx},L_{xy},L_{yx},L_{yy})$-smooth if: 
%	\begin{eqnarray*}
%	\norm{\nabla_x g(x,y) - \nabla_x g(x',y')}  
%	&\leq& L_{xx} \norm{x - x'}  + L_{xy} \norm{y - y'}  \;, \text{ and }\\ 
%	\norm{\nabla_y g(x,y) - \nabla_y g(x',y')} &\leq& 
%	L_{yx} \norm{x - x'} + L_{yy} \norm{y - y'}\;, 
%	\end{eqnarray*}
%	for all $x,x',y$, and $y'$.
%\end{definition}
%
%
%\bigskip
%\noindent
%{\bf Remark 3 (Comparisons to algorithms requiring exact optimization of sub-problems)}: 
%\cite{hamedani2018primal} studies a minimax optimization with a special structure of the form:  
%\begin{eqnarray} 
%	\min_{x\in\cX} \; \max_{y\in\cY} \;  g(x,y) \;=\; \phi(x) + \langle \psi(x) , y \rangle - \xi(y)  \;.
%\end{eqnarray} 
%Is it assumed  that $\phi(x)$ is $\sigma$-strongly convex, 
%$\psi(x)$ is convex, differentiable, $L_{yx}$-Lipschitz, and $L_{xx}$-smooth, 
%and an oracle access to the exact solution to the following sub-routines:  
%$\arg\min_{x\in\cX} \{t \phi(x) + \langle s ,x \rangle + \|x-x'\|^2 \} $ for all $t,s,x'$, and  
%$\arg\min_{y\in\cY} \{t\xi(x) + \langle s,y\rangle + \|y-y'\|^2\}$ for all $t,s,y'$. 
%It is shown that after $K$ accesses to such an oracle, the algorithm achieves 
%a primal-dual gap $\max_{\ty \in \cY} g(\bar{x}_{K}, \ty) -  \min_{\tx \in \cX} g(\tx, y_K) $ 
%upper bounded by $(L_{xx}+L_{yx}^2)D_{\cX}^2/K^2$. 

%We now study the non-convex-concave minimax problem and show how the result of this section can help improve the result for the challenging non-convex setting as well. 

% !TEX root = ./smooth_minimax.tex

\section{Nonconvex concave saddle point problem}\label{sec:noncvx}

%\begin{align}
%\min_{x \in \cX} \; [\; f(x) =  \max_{y \in \cY} g(x, y) \;]\;, 
%\tag{P2}\label{prob:smooth-minimax}
%\end{align}
%where
We study the nonconvex concave minimax problem~\eqref{eqn:minimax}
where $g(x, \cdot)$ is concave, $g(\cdot, y)$ is nonconvex, and $g(\cdot, \cdot)$ is $L$-smooth, $\cX = \reals^p$ (such that ${\rm Proj}_{\cX}(x) = x$) and $\cY$ is a convex compact sub-set of $\reals^q$. 
%As the problem is nonconvex, the minimum primal-dual gap is unknown, 
%making it challenging to track its convergence.  
As mentioned in Section~\ref{sec:background},
we measure the convergence to an approximate FOSP of this problem (see Definition~\ref{def:eps_fosp}) but it requires weak-convexity of $f(x) \defeq \max_{y \in \cY} g(x,y)$. The following lemma guarantees weak convexity of $f$ given smoothness of $g$.% to 
%$L$-weak convexity of $f$. %, allowing us to track convergence of the gradient of its Moreau envelope as discussed in Section \ref{sec:nonconvex-concave-minmax}. 
\begin{lemma}\label{lem:weakly-cvx-smooth}
Let $g(\cdot, y)$ be continuous and $\cY$ be compact. Then $f(x) = \max_{y \in \cY} g(x, y)$ is $L$-weakly convex, if $g$ is $L$-weakly convex in $x$ (Definition \ref{def:smooth}), or if $g$ is $L$-smooth in $x$ .
\end{lemma}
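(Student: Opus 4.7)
The plan is to reduce both hypotheses to a single clean characterization: a proper l.s.c.\ function $f$ is $L$-weakly convex if and only if $x \mapsto f(x) + \tfrac{L}{2}\|x\|^2$ is convex. Under this characterization, weak convexity of the pointwise maximum follows from the elementary fact that the pointwise supremum of a family of convex functions is convex, which is exactly the step we want to leverage.

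Concretely, I would first dispatch the smoothness case by observing that $L$-smoothness of $g(\cdot, y)$ implies $L$-weak convexity of $g(\cdot, y)$. This is the standard ``descent lemma'' argument: $L$-smoothness yields the two-sided quadratic bound
\begin{equation*}
\bigl| g(x', y) - g(x, y) - \langle \nabla_x g(x, y), x' - x\rangle \bigr| \;\leq\; \tfrac{L}{2}\|x' - x\|^2,
\end{equation*}
and taking the lower half gives $g(x', y) \geq g(x, y) + \langle \nabla_x g(x, y), x' - x\rangle - \tfrac{L}{2}\|x' - x\|^2$, which is precisely the weak convexity inequality of Definition~\ref{def:weak-convex} with the Fr\'echet subgradient specialized to $\nabla_x g(x,y)$. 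So the smoothness case reduces to the weak convexity case.

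It then remains to show: if $g(\cdot, y)$ is $L$-weakly convex for every $y \in \cY$, then $f(x) = \max_{y \in \cY} g(x, y)$ is $L$-weakly convex. Using the equivalent characterization, for each fixed $y$ the function $x \mapsto g(x, y) + \tfrac{L}{2}\|x\|^2$ is convex, so
\begin{equation*}
f(x) + \tfrac{L}{2}\|x\|^2 \;=\; \max_{y \in \cY} \Bigl[\, g(x, y) + \tfrac{L}{2}\|x\|^2 \,\Bigr]
\end{equation*}
is a pointwise supremum of a family of convex functions in $x$, hence convex. Compactness of $\cY$ together with continuity of $g(\cdot, y)$ ensures the maximum is attained (so $f$ is well-defined and proper), but it is not needed for the supremum of convex functions to be convex. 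Thus $f$ is $L$-weakly convex, completing both cases.

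The only mildly delicate point is the equivalence between Definition~\ref{def:weak-convex} (stated via Fr\'echet subgradients) and the ``convex-plus-quadratic'' characterization. For proper l.s.c.\ functions this is classical, and the forward direction I use here (if $f + \tfrac{L}{2}\|\cdot\|^2$ is convex then the weak convexity inequality holds at every Fr\'echet subgradient) is immediate by writing out convexity of $f + \tfrac{L}{2}\|\cdot\|^2$ in terms of its subgradients and transferring back to $\partial f$. So no serious obstacle arises; the lemma is essentially a one-line application of ``max of convex is convex'' after a routine preprocessing of the hypothesis.
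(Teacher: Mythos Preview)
Your argument is correct and shares its opening with the paper's: both reduce the smooth case to the weakly convex one via the descent lemma, and both then observe that $\tilde g(x,y) := g(x,y) + \tfrac{L}{2}\|x\|^2$ is convex in $x$ for each $y$. Where you diverge is in the last step. You conclude directly: $f(x) + \tfrac{L}{2}\|x\|^2 = \max_y \tilde g(x,y)$ is a pointwise supremum of convex functions, hence convex, hence $f$ is $L$-weakly convex (via the subgradient sum rule $\partial f(x) = \partial(f+\tfrac{L}{2}\|\cdot\|^2)(x) - Lx$ to translate back to Definition~\ref{def:weak-convex}). The paper instead invokes Danskin's theorem on $\tilde f(x) = \max_y \tilde g(x,y)$ to obtain the explicit formula $\partial f(x) = \mathrm{conv}\{\partial_x g(x,y^*) : y^* \in \arg\max_y g(x,y)\}$, and then verifies the weak-convexity inequality of Definition~\ref{def:weak-convex} directly for each element of this set using weak convexity of $g(\cdot,y^*)$ and $f(x') \geq g(x',y^*)$. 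Your route is shorter and avoids the Danskin machinery entirely; the paper's route yields, as a byproduct, the characterization of $\partial f$ in terms of the active $y^*$'s, which is informative but not needed for the lemma itself.
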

See Appendix \ref{sec:proof_weakly-cvx-smooth} for the proof. % of the lemma.
The arguments of~\cite{jin2019minmax} easily extend to show that applying subgradient method on $f(x)$,~\cite{davis2018stochastic} gives a convergence rate of $\order{1/k^{1/5}}$.
%it would be difficult to exploit the smoothness structure of $g$ and hence suffer from slow rates.
Instead, we exploit the smooth minimax form of $f(\cdot)$ to design a faster converging scheme.
The main intuition comes from the proximal viewpoint that gradient descent can be viewed as iteratively forming and optimizing local quadratic upper bounds.
%, we can construct a sequence of explicit quadratic approximations of $f$ and optimize them to converge to a desirable solution.
As $f$ is weakly convex, adding enough quadratic regularization should ensure that the resulting sequence of problems are all strongly-convex--concave. We then exploit \mpscc to efficiently solve such local quadratic problems to obtain improved convergence rates.
% for convergence to $\varepsilon$-FOSP. 
%\if0
%Our strategy is to design local updates involving 
%{\em strongly-convex concave} problems, 
%such that we can apply \mpscc. 
%The fast convergence rate of \mpscc in Theorem~\ref{thm:minimax_stronglycvx_cve_agd} 
%will translate into faster convergence rate to the $\varepsilon$-FOSP (Definition \ref{def:eps_fosp}). 
%To this end, at each step of algorithm, we search for the minimizer $\hat{x}_\lambda(x_k)$ of the Moreau envelop %(Lemma~\ref{lem:moreau-properties}) 
%with parameter $\lambda=1/{(2L)}$, at $x_k$. 
%Together with $L$-weak convexity of $g(\cdot ,\cdot)$ of Lemma \ref{lem:weakly-cvx-smooth}, 
%this ensures the desired strong convexity (Lemma~\ref{lem:moreau-properties}). 
%\fi
Concretely, let
\begin{eqnarray}
\wf(x; x_k)  \;\; = \;\; \max_y g(x, y)+ L \| x - x_k \|^2 \;. \label{eq:gradmap_general}
\end{eqnarray}
By $L$-weak-convexity of $f$, $\wf(x;x_k)$ is {\em strongly}-convex--concave (Lemma \ref{lem:weak-plus-strong-cvx}) that can be solved using \mpscc up to {\em certain accuracy} to obtain $x_{k+1}$. We refer to this algorithm as  \lqancc and provide a pseudo-code for the same in Algorithm~\ref{algo:minimax_noncvx_cve_gd}.
%The inner loop of \lqancc forms the quadratic approximation problem and solves it using \mpscc up to a fixed accuracy, to ensure overall fast rate of convergence. 
%Then minimizer of the above function would be the minimizer for the inner minimization of Moreau envelope function $\hat{x}_{\frac1{2L}}(x_k) = \arg\min_{x \in \cX} f(x) + L \|x_k-x\|^2$. 
%to find an $\varepsilon$-FOSP  of the  non-smooth minimization problem $\min_{x \in \cX} f(x)$. 
%which is characterized by the gradient of its Moreau envelope.  
%By Lemma \ref{lem:weakly-cvx-smooth}, for any fixed $y$ ($\cY= \{y\}$), $g(\cdot, y)$ is $L$-weakly convex. Also note that the quadratic term in $g(x, y) + 
%We introduce a new approach to solve the 
%non-convex concave minimax problem of 
%\eqref{prob:smooth-minimax} that we refer to as \lqancc. 
%\lqancc is designed to iteratively solve the above local strong convex approximations. 
%We propose using  \mpscc{} to find an approximate solution to this inner problem.  
% (Algorithm \ref{algo:minimax_stronglycvx_cve_agd}) algorithm for strongly-convex--concave min-max problem to set $x_{k+1}$ as an approximate minimizer of the function $f(x; x_k)$. In the following theorem, we prove that iteratively solving $f(x; x_k)$ converges to an $\varepsilon$-FOSP of \eqref{prob:smooth-minimax}. 
\begin{algorithm}[]
	\DontPrintSemicolon % Some LaTeX compilers require you to use \dontprintsemicolon instead
	\KwIn{$g$, $L$, $\varepsilon$, $x_0$, $y_0$}
	\KwOut{$x_k$}
	Set $\tepsilon \leftarrow \frac{\varepsilon^2}{64\,L}  $\;
	\For{$k = 0, 1, \ldots, K$} {
		Using \mpscc for strongly convex concave minimax problem, find $x_{k+1}$ such that,
		\begin{align}
		\max_{y \in \cY} g(x_{k+1}, y)+ L \| x_{k+1} - x_k \|^2 \; \leq \; \min_x \max_{y \in \cY} g(x, y)+ L \| x - x_k \|^2 + \frac\tepsilon4
		\end{align}
		\If{$\max_{y\in\cY} g(x_k, y) - \frac{3\tepsilon}4 \leq \max_{y \in \cY} g(x_{k+1}, y)+ L \| x - x_k \|^2$}{
			\KwRet{$x_{k}$}
		} 
	}
	\caption{Proximal Dual Implicit Accelerated Gradient (\lqancc) for nonconvex concave programming}
	\label{algo:minimax_noncvx_cve_gd}
\end{algorithm}
The following theorem gives convergence guarantees for \lqancc.
%states our formal result for the non-convex-concave minimax problem. %and a formal proof is provided in Appendix \ref{sec:minimax_noncvx_cve_gd_proof}. 
\begin{theorem}[Convergence rate of \lqancc]
Let $g(x,y)$ be $L$-smooth, $g(x, \cdot)$ be concave,
%, $g(\cdot, u)$ be nonconvex but $L$-smooth, 
$\cX$ be $\reals^p$, $\cY$ be a convex compact subset of $\reals^q$, and the minimum value of function $f(x) = \max_{y \in \cY} g(x, y)$ be  bounded below, i.e.~$f(x)\geq f^*>-\infty$. Then \lqancc (Algorithm \ref{algo:minimax_noncvx_cve_gd}) after, $$K = \ceil[\bigg]{\frac{4^4 L (f(x_0) -f^*)}{3\varepsilon^2}}$$ steps outputs an $\varepsilon$-FOSP. The total first-order oracle complexity to output $\varepsilon$-FOSP is:\\ $O\big({\frac{L^2 D_{\cY}(f(x_0) -f^*)}{\varepsilon^3}} \log^2 \big(1/{\varepsilon}\big)\big)\,.$%number of gradient computations done per iteration by the inner strongly-convex--concave min-max step is at most $O\big(({L D_{\cY}}/\varepsilon) \log^2 \big(1/{\varepsilon}\big)\cdot \big)\,.$
\label{thm:minimax_noncvx_cve_gd}
\end{theorem}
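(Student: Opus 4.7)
The plan is to view each outer step of \lqancc as an inexact proximal step on the $L$-weakly convex function $f(x) = \max_{y \in \cY} g(x,y)$ (Lemma~\ref{lem:weakly-cvx-smooth}) and to measure approximate stationarity through the Moreau envelope $f_{1/(2L)}$ from Lemma~\ref{lem:moreau-properties}. Since $f$ is $L$-weakly convex, $\wf(x; x_k) = f(x) + L\|x - x_k\|^2$ is $L$-strongly convex in $x$ (weak convexity $-L$ plus quadratic contribution $+2L$); letting $\hat{x}_k \defeq \arg\min_x \wf(x; x_k)$, which coincides with the proximal map of $f$ at parameter $\lambda = 1/(2L)$, Lemma~\ref{lem:moreau-properties}(c) yields $\|\nabla f_{1/(2L)}(x_k)\| = 2L\,\|\hat{x}_k - x_k\|$. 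Hence it suffices to certify $\|\hat{x}_k - x_k\| \leq \varepsilon/(2L)$ whenever the algorithm terminates.

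For the termination case, the inner approximate minimization guarantees $\wf(x_{k+1}; x_k) \leq \wf(\hat{x}_k; x_k) + \tepsilon/4$, and the exit test $f(x_k) - 3\tepsilon/4 \leq \wf(x_{k+1}; x_k)$ combined with the identity $f(x_k) = \wf(x_k; x_k)$ yields $\wf(x_k; x_k) - \wf(\hat{x}_k; x_k) \leq \tepsilon = \varepsilon^2/(64L)$; $L$-strong convexity of $\wf(\cdot; x_k)$ then forces $\tfrac{L}{2}\|x_k - \hat{x}_k\|^2 \leq \tepsilon$, giving $\|\nabla f_{1/(2L)}(x_k)\| \leq \varepsilon/(2\sqrt{2}) \leq \varepsilon$ and hence an $\varepsilon$-FOSP. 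For a non-terminating iteration, $f(x_k) - \wf(x_{k+1}; x_k) > 3\tepsilon/4$, and since $\wf(x_{k+1}; x_k) \geq f(x_{k+1})$, the objective $f$ drops strictly by more than $3\tepsilon/4$; telescoping against the lower bound $f^*$ gives the claimed outer-iteration count $K \leq \lceil 4^4 L(f(x_0) - f^*)/(3\varepsilon^2) \rceil$.

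For the total first-order oracle complexity, each inner subproblem is $L$-strongly convex and $O(L)$-smooth in $x$ (adding the quadratic $L\|\cdot - x_k\|^2$ shifts the smoothness constant by at most $2L$), concave in $y$, with dual diameter $D_\cY$. Invoking Theorem~\ref{thm:minimax_stronglycvx_cve_agd} with target primal-dual gap $\tepsilon/4 = \Theta(\varepsilon^2/L)$ bounds the number of \mpscc outer steps by $O(L D_\cY/\sqrt{L\,\tepsilon}) = O(L D_\cY/\varepsilon)$, each costing $\tilde{O}(1)$ gradient queries through \mpstep because the condition number $L/\sigma$ is a constant. Multiplying by the $O(L(f(x_0)-f^*)/\varepsilon^2)$ outer iterations of \lqancc produces the stated $\tilde{O}(L^2 D_\cY(f(x_0)-f^*)/\varepsilon^3)$ bound.

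The main subtlety I expect is bridging the primal-dual guarantee of \mpscc (which controls $\max_{y}\tilde{g}(x_{k+1},y) - \min_{x}\tilde{g}(x,y_K)$ for the augmented $\tilde{g}(x,y) = g(x,y) + L\|x-x_k\|^2$) to the single-variable accuracy $\wf(x_{k+1}; x_k) - \min_x \wf(x; x_k)$ required by Algorithm~\ref{algo:minimax_noncvx_cve_gd}; this is routine since the gap upper bounds $\max_y \tilde g(x_{k+1},y) - \min_x \max_y \tilde g(x,y) = \wf(x_{k+1};x_k) - \min_x \wf(x;x_k)$, so targeting $\tepsilon/4$ accuracy in the gap suffices. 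The rest of the proof is bookkeeping on constants to produce the sharp value $4^4$ in the iteration count.
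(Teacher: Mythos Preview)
Your proposal is correct and follows the same overall strategy as the paper: split each outer iteration into a descent case and a termination case, use strong convexity of $\wf(\cdot;x_k)$ in the termination case to bound $\|\nabla f_{1/(2L)}(x_k)\|$, and combine the outer-iteration count with Theorem~\ref{thm:minimax_stronglycvx_cve_agd} for the oracle complexity.

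There is one small but genuine streamlining relative to the paper. You observe directly that $\hat{x}_k = \arg\min_x \wf(x;x_k)$ \emph{is} the proximal point $\hat{x}_{1/(2L)}(x_k)$, so from $\tfrac{L}{2}\|x_k - \hat{x}_k\|^2 \leq \tepsilon$ you immediately get $\|\nabla f_{1/(2L)}(x_k)\| = 2L\|x_k-\hat{x}_k\| \leq 2\sqrt{2L\tepsilon} = \varepsilon/(2\sqrt{2})$. The paper instead proves the same bound $\|x_k - x_k^*\| < \sqrt{2\tepsilon/L}$ but then takes an indirect route, arguing that any $\tilde x$ with $\|\tilde x - x_k\| \geq 4\sqrt{\tepsilon/L}$ satisfies $f(\tilde x)+L\|\tilde x - x_k\|^2 \geq f(x_k)+\tepsilon$, and only then concludes the proximal point lies in that larger ball, yielding the looser $\|\nabla f_{1/(2L)}(x_k)\| \leq 8\sqrt{L\tepsilon} = \varepsilon$. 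Your shortcut is legitimate here precisely because, unlike in the finite-minimax variant (Corollary~\ref{thm:finite_minimax_gd}) where $\wf(\cdot;x_k)$ is a linearized model and its minimizer is \emph{not} the proximal point, in Theorem~\ref{thm:minimax_noncvx_cve_gd} the inner objective $\wf(x;x_k)=f(x)+L\|x-x_k\|^2$ is exactly the Moreau objective. The paper's detour appears to be a carryover from the linearized setting; your argument is cleaner and yields a sharper constant, though this does not affect the stated theorem.
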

Note that \lqancc solves the quadratic approximation problem to higher accuracy of $O(\epsilon^2)$ which then helps bounding the gradient of the Moreau envelope. Also due to the modular structure of the argument, a faster inner loop for special settings, e.g., when $g(x,y)$ is a finite-sum, can ensure more efficient algorithm. While our algorithm is able to significantly improve upon existing state-of-the-art rate of $O(1/\varepsilon^5)$ in general nonconvex-concave setting \cite{jin2019minmax}, it is unclear if the rate can be further improved. In fact, precise lower-bounds for this setting are mostly unexplored and we leave further investigation into lower-bounds as a topic of future research. 

\begin{proof}
	We first note that by Lemma \ref{lem:weak-plus-strong-cvx} and $L$-weak convexity of $g(\cdot, y)$ and $2L$-strong convexity of $L\|x-x_k\|^2$, $\wg(x,y; x_k) := g(x, y) + L \| x - x_k \|^2$ is $L$-strongly-convex. Similarly, $\wf(\cdot; x_k):=\max_{y \in \cY} [\wg(x,y; x_k) = g(x, y) + L \| x - x_k \|^2]$ is also $L$-strongly-convex.

\if0
	Similarly, 
Let,
\begin{align}\label{eq:quad-approx-smooth}
f(x; x_k) = \max_{y \in \cY} [g(x,y; x_k) = g(x, y) + L \| x - x_k \|^2 ]\,.
\end{align}
Then by Lemma \ref{lem:weak-plus-strong-cvx} and $L$-weak convexity of $g(\cdot, y)$ and $2L$-strong convexity of $L\|x-x_k\|^2$, we get  that $g(\cdot, y; x_k)$ is $L$-strongly-convex.
Similarly, $f(\cdot; x_k)$ can be shown to be $L$-strongly convex too, since $f(x, x_k) = [\max_y g(x,y)] + L\|x- x_k \|^2$ and $\max_u g(x, u)$ is $L$-weakly convex by Lemma \ref{lem:weakly-cvx-smooth}.
\fi

We now divide the analysis of each iteration of our algorithm into two cases:  

\noindent
{\bf Case 1: 
	%{\bf If $\mathbf{f(x_{k+1}; x_k) {\boldsymbol\leq}  f(x_k) - 3\boldsymbol{\tepsilon}/4}$},  
	${\wf(x_{k+1}; x_k) {\leq}  f(x_k) - 3{\tepsilon}/4}$.}  As every instance of Case 1 ensures $f(x_{k+1})\leq \wf(x_{k+1}; x_k) \leq f(x_k) - 3{\tepsilon}/4$,  we can have only 
$\ceil[\Big]{\frac{4(f(x_0) - f^*)}{3\tepsilon}}$ Case 1 steps before termination. This claim requires monotonic decrease in $f(x_k)$ which holds until $f(x_{k+1})\geq f(x_k)$, after which $\wf(x_{k+1}; x_{k})\geq f(x_k)$, which in-turn imply that \lqancc terminates (see termination condition of \lqancc).

\noindent 
{\bf Case 2: 
	%{\bf If $\mathbf{\wf(x_{k+1}; x_k) \boldsymbol>  f(x_k) - 3\boldsymbol{\tepsilon}/4}$}
	${\wf(x_{k+1}; x_k) {>}  f(x_k) - 3{\tepsilon}/4}$:} In this case, we show that $x_k$ is already an $\varepsilon$-FOSP and  the algorithm returns $x_k$. 
\begin{align}
f(x_k) - \frac{3\tepsilon}4 < \wf(x_{k+1}; x_k) \leq \min_x \wf(x; x_k)  + \frac{\tepsilon}4  
\;\;\;\; \implies   \;\;\;\; f(x_k)  < \min_x \wf(x; x_k)  + {\tepsilon}
\label{eq:less-decrease-smooth}
\end{align}
Define $x_k^*$ as the point satisfying  $x^*_k = \arg\min_x \wf(x; x_k)$. 
By $L$-strong convexity of $\wf(\cdot; x_k)$ \eqref{eq:gradmap_general}, we prove that $x_k$ is close to $x_k^*$:{\small 
\begin{align}
\wf(x^*_k;x_k) + \frac{L}2 \|x_k-x^*_k\|^2 \; \leq \; \wf(x_{k};x_k) \; =\;  f(x_k) \; \overset{(a)}<  \; \wf(x^*_{k};x_k) + \tepsilon  
\implies\|x_k-x^*_k\| < \sqrt{\frac{2\tepsilon}{L}}  \label{eq:moreau-ball-ub-smooth}
\end{align}}
where $(a)$ uses \eqref{eq:less-decrease-smooth}. Now consider any $\tx \in \cX$, such that $4\sqrt{\tepsilon/L} \leq \| \tx - x_k\|$. Then,
\begin{align}
f(\tx) + L \|\tx - x_k\|^2 &= \max_{y \in \cY} g(\tx, y) + L \|\tx - x_k\|^2 = \wf(\tx; x_k) \overset{(a)}= \wf(x_k^*; x_k) + \frac{L}2 \|\tx - x^*_k\|^2 \nonumber \\
&\overset{(b)}\geq f(x_k) - \tepsilon + \frac{L}2 (\|\tx - x_k\| - \|x_k - x^*_k\|)^2 \overset{(c)}\geq  f(x_k) + \tepsilon, \label{eq:moreau-ball-lb-smooth}
\end{align}
where $(a)$ uses uses $L$-strong convexity of $\wf(\cdot; x_k)$ at its minimizer $x^*_k$, $(b)$ uses \eqref{eq:less-decrease-smooth}, and $(b)$ and $(c)$ use triangle inequality, \eqref{eq:moreau-ball-ub-smooth} and $4\sqrt{\tepsilon/L} \leq \| \tx - x_k\|$.

Now consider the Moreau envelope, $f_{\frac1{2L}}(x) = \min_{x' \in X} \phi_{\frac1{2L}, x}(x')$ where $\phi_{\lambda,x}(x') = f(x') + L \|x - x'\|^2$. Then, we can see that $\phi_{\frac1{2L}, x_k}(x')$ achieves its minimum in the ball $\{x' \in \cX \,|\, \|x' - x_k\| \leq 4\sqrt{\tepsilon/L} \}$ by \eqref{eq:moreau-ball-lb-smooth} and Lemma \ref{lem:moreau-properties}(a). Then, with Lemma \ref{lem:moreau-properties}(b,c) and $\tepsilon = \frac{\varepsilon^2}{64\,L}$, we get that,
\begin{align}
\| \nabla f_{\frac1{2L}}(x_k) \| \leq (2L) \| x_k - \hat{x}_{\frac1{2L}}(x_k) \| = 8 \sqrt{L \tepsilon} = \varepsilon,
\end{align}
i.e., $x_k$ is an $\varepsilon$-FOSP. 

By combining the above two cases, we establish that $O\big(\ceil[\big]{\frac{4(f(x_0) - f^*)}{3\tepsilon}}\big)$ ``outer" iterations ensure convergence to a $\varepsilon$-FOSP. We now compute the first-order complexity of each of these ``outer" iterations. Recall that we use use the \mpscc{} (Algorithm \ref{algo:minimax_stronglycvx_cve_agd}) algorithm for $L$-strongly-convex concave $2L$-smooth minimax problem to solve the inner optimization problem. So, if for each iteration of inner problem, \mpscc{} algorithm takes $K$ steps then, by $\tepsilon=\frac{\varepsilon^2}{64\,L}$ and Theorem \ref{thm:minimax_stronglycvx_cve_agd},
\begin{align}
\frac{6(2L)^2 D^2_{\cY}}{L K^2} \leq \frac{\tepsilon}{4} = \frac{\varepsilon^2}{2^8 L} \;\implies\; O\bigg(\frac{L D_{\cY}}{\varepsilon}\bigg) \leq K
\end{align}
Therefore the number of gradient computations required for each iteration of inner problem is $ O\Big(\frac{L D_{\cY}}{\epsilon} \log^2 \Big(\frac{1}{\varepsilon} \Big) \Big)$ (Theorem \ref{thm:minimax_stronglycvx_cve_agd}), which along with the bound on the number of outer iterations establishes the Theorem's upper bound on the number of first-order oracle calls.% now follows.% we obtain the upper bound on the number of first-order oracle complexity. 
%see that the total number of gradient computation to reach $\varepsilon$-FOSP is: 
%\begin{align}
%\ceil[\Bigg]{\frac{4^4 L (f(x_0) - f^*)}{3\varepsilon^2}} O\bigg(\frac{L D_{\cY}}{\epsilon} \log^2 \bigg(\frac{1}{\varepsilon} \bigg) \bigg)\,.
%\end{align}
\end{proof}

%!TEX root = ./smooth_minimax.tex
\subsection{Minimizing finite max-type function with smooth components}
As a special case of nonconvex--concave minimax problem, consider minimizing a weakly convex $f(x)$, with a special structure of {\em finite max-type function}: 
\begin{align} %\label{eq:finite-minimax}
	\min_x \; \Big[f\, (x) =  \max_{1 \leq i \leq m} f_i(x) \,\Big]  
	\tag{P3}\label{prob:finite-minimax}\;,
\end{align}
where $x \in \reals^p$, the functional components $f_i(x)$'s could be {\em nonconvex} but are $L$-smooth and $G$-Lipschitz. Suppose $f$ itself takes a minimum value $f^* > -\infty$.
%Recall that using Lemma \ref{lem:weakly-cvx-smooth} and smoothness of $f_i(x)$'s, $f(x)$ is weakly-convex, and the goal is to find an approximate FOSP (Definition~\ref{def:eps_fosp}). %Now using appropriate mirror-map in Algorithm~\ref{algo:minimax_noncvx_cve_gd}, we can get result similar to the one in Corollary
%This is a special case of the the non-convex min-max problem \ref{prob:smooth-minimax}, as we can re-write $f$ as $f(x) = \max_{0\preceq y\preceq 1, \sum_{i=1}^m y_i=1} \sum_i y_i \cdot f_i(x)$.
%max_{y \in \simplex_m} g(x, y)$ where $g(x, y) = \sum_{i \in [m]} y_i f_i(x)$, where $\simplex_m  = \{y \in [0, 1]^m | \sum_{i \in [m]} y_i = 1 \}$.
%While smoothness is not preserved when we take a finite max of smooth functions, the weak convexity (Definition~\ref{def:weak-convex}) is indeed preserved  (Lemma \ref{lem:weakly-cvx-smooth}). That is, the smoothness of $f_i(x)$'s implies weakly convexity of $f_i(x)$'s, which implies weak convexity of $f(x)$. 
%\begin{lemma}\label{lem:weakly-cvx}
%$f(x) = \max_{i \in [m]} f_i(x)$ is $L$-weakly convex, if all $f_i(x)$'s are $L$-smooth.
%\end{lemma}
%So, for this weakly convex problem, the goal is to find an 
%approximate FOSP (Definition~\ref{def:eps_fosp}). 
%Ignoring the max-type structure, 
%one could apply 
%proximal stochastic subgradient method of \cite{davis2018stochastic} 
%designed for any non-smooth weakly-convex minimization. 
%This ensures that $\varepsilon$-FOSP can be achieved in $O(1/\varepsilon^4)$ operations. 
For this problem, we propose and study a proximal (\lqafncc) algorithm (Algorithm~\ref{algo:minimax_gd} presented in Appendix~\ref{sec:proof_finite_minimax_gd}) that is inspired by Algorithm~\ref{algo:minimax_noncvx_cve_gd} with the inner problem-solver replaced by Nesterov's finite convex minimax scheme \cite[Section 2.3.1]{nesterov1998introductory} instead of Algorithm~\ref{algo:minimax_stronglycvx_cve_agd}. Using same proof technique as Theorem~\ref{thm:minimax_noncvx_cve_gd}, we get: 
\begin{corollary}[Convergence rate of \lqafncc]
	If the functional components $f_i(x)$'s are $G$-Lipschitz and $L$-smooth,  and the optimal solution is 
	bounded below, i.e.~$f(x)\geq f^* > -\infty$, then after: $K=\ceil[\bigg]{\frac{4^4 L (f(x_0) - f^*)}{3 \varepsilon^2}}$ 	{\em outer} steps,  \lqafncc outputs an $\varepsilon$-FOSP. The total first-order oracle complexity to find $\varepsilon$-FOSP is:  $\ceil[\bigg]{\frac{4^4 L (f(x_0) - f^*)}{3 \varepsilon^2}} \cdot \ceil[\bigg]{\frac{2^4 G}{\varepsilon} (m \log^{3/2} m) {{}{}}}$. %	total iterations by the inner excessive gap technique step.
	\label{thm:finite_minimax_gd}
\end{corollary}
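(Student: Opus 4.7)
The plan is to reuse the outer-loop architecture of Theorem~\ref{thm:minimax_noncvx_cve_gd} essentially verbatim, and only reanalyze the inner subproblem. First, write $f(x) = \max_i f_i(x) = \max_{y \in \Delta_m} \sum_i y_i f_i(x)$; since each $f_i$ is $L$-smooth, $g(x,y) \defeq \sum_i y_i f_i(x)$ is $L$-smooth (in $x$) for every fixed $y$, and Lemma~\ref{lem:weakly-cvx-smooth} yields that $f$ is $L$-weakly convex. Consequently the proximal model $\wf(x; x_k) = f(x) + L\|x - x_k\|^2$ is $L$-strongly convex in $x$ (weakly convex $+$ $2L$-strongly convex), and it has the finite max-type form $\max_i \bigl(f_i(x) + L\|x-x_k\|^2\bigr)$ with each component $3L$-smooth and, on a bounded sublevel set, Lipschitz with a controlled constant.

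The outer loop of \lqafncc mirrors Algorithm~\ref{algo:minimax_noncvx_cve_gd} with the Prox-DIAG inner solver replaced by Nesterov's finite minimax scheme of \cite[Section 2.3.1]{nesterov1998introductory}, tuned to produce $x_{k+1}$ with $\wf(x_{k+1}; x_k) \leq \min_x \wf(x; x_k) + \tilde{\varepsilon}/4$, where $\tilde{\varepsilon} = \varepsilon^2/(64 L)$. The Case~1 / Case~2 dichotomy in the proof of Theorem~\ref{thm:minimax_noncvx_cve_gd} depends only on (i) $L$-strong convexity of $\wf(\cdot; x_k)$ and (ii) the $\tilde{\varepsilon}/4$ optimality of $x_{k+1}$, both of which hold here. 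Thus Case~1 can fire at most $\lceil 4(f(x_0)-f^*)/(3\tilde{\varepsilon})\rceil = \lceil 4^4 L (f(x_0)-f^*)/(3\varepsilon^2)\rceil$ times before $f$ can no longer decrease, and whenever Case~2 triggers the Moreau envelope argument (via $L$-strong convexity of $\wf(\cdot;x_k)$ and Lemma~\ref{lem:moreau-properties}) certifies $x_k$ as an $\varepsilon$-FOSP exactly as in \eqref{eq:less-decrease-smooth}--\eqref{eq:moreau-ball-lb-smooth}. This gives the claimed outer iteration count $K$.

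For the inner cost, I would invoke Nesterov's smoothing-based accelerated scheme on the strongly convex finite minimax problem $\min_x \max_i [f_i(x) + L\|x-x_k\|^2]$. Using the entropy prox-function on the simplex $\Delta_m$ with smoothing parameter $\eta \asymp \tilde{\varepsilon}/\log m$, one obtains a smooth approximation with gradient-Lipschitz constant $O(L + G^2/\eta)$ and uniform approximation error $O(\eta \log m)$; accelerated gradient on this $L$-strongly convex surrogate terminates in $O(\sqrt{(L + G^2/\eta)/L}\,\mathrm{polylog}(1/\tilde{\varepsilon}))$ iterations, each costing $m$ component gradient evaluations. Substituting $\eta \asymp \tilde{\varepsilon}/\log m$ and $\tilde{\varepsilon} \asymp \varepsilon^2/L$ and carefully tracking the logarithmic factors yields per-outer cost $\lceil 2^4 G m \log^{3/2} m / \varepsilon \rceil$, which multiplied by $K$ gives the total first-order oracle complexity claimed in the corollary.

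The main obstacle is the last bookkeeping step: squeezing out exactly a $\log^{3/2} m$ factor rather than $\sqrt{\log m}\cdot \log(1/\varepsilon)$ from Nesterov's scheme requires choosing the smoothing parameter, the AGD step size, and the termination threshold in tandem, and using the fact that the $L$-strong convexity of the proximal model cancels one factor of $\sqrt{L}$ in the condition number. All other pieces—weak convexity of $f$, strong convexity of $\wf$, the Case~1/Case~2 argument, and the Moreau envelope bound—transfer directly from the proof of Theorem~\ref{thm:minimax_noncvx_cve_gd}.
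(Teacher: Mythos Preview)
Your outer-loop argument is correct and essentially identical to the paper's: the Case~1/Case~2 dichotomy, the strong convexity of the proximal model, and the Moreau-envelope certification all go through verbatim, yielding the outer iteration count $K=\lceil 4^4 L(f(x_0)-f^*)/(3\varepsilon^2)\rceil$.

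Where you diverge is the \emph{inner subproblem}. You take $\wf(x;x_k)=f(x)+L\|x-x_k\|^2=\max_i\bigl[f_i(x)+L\|x-x_k\|^2\bigr]$ and attack it with entropy smoothing plus strongly-convex AGD. The paper instead uses the \emph{linearized} model
\[
\wf(x;x_k)\;=\;\max_{1\le i\le m}\;\Bigl[f_i(x_k)+\langle\nabla f_i(x_k),\,x-x_k\rangle+\tfrac{L}{2}\|x-x_k\|^2\Bigr],
\]
i.e.\ each component is replaced by its first-order model plus the common quadratic. Smoothness of the $f_i$'s gives $f(x)\le\wf(x;x_k)$ (so Case~1 still forces descent), and weak convexity gives $f(x)+L\|x-x_k\|^2\ge\wf(x;x_k)$ (so the Case~2 Moreau argument still controls $f_{1/2L}$). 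Thus the outer analysis survives the linearization with one extra inequality in each case.

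What the linearization buys is exactly the bookkeeping you flag as an obstacle. After linearizing, the components are pure quadratics whose linear parts have norm $\|\nabla f_i(x_k)\|\le G$; the common quadratic term drops out of the max. Nesterov's excessive-gap technique for strongly convex finite max problems \cite[Problem~(7.11)]{nesterov2005excessive} then delivers $\tepsilon/4$ accuracy in $O\bigl(G\sqrt{\log m/(L\tepsilon)}\bigr)$ iterations \emph{without} a $\log(1/\varepsilon)$ factor, and with the $\log m$ dependence coming cleanly from the entropy prox-function diameter. Multiplying by $m$ gradient evaluations per iteration and substituting $\tepsilon=\varepsilon^2/(64L)$ yields the stated $\lceil 2^4 G m\log^{3/2}m/\varepsilon\rceil$ per outer step. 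Your route through smoothing of the true $f_i$'s followed by AGD on an $L$-strongly convex surrogate necessarily picks up the $\log(1/\tepsilon)$ termination factor you identified, so it proves the corollary only up to an extra $\log(1/\varepsilon)$; to match the exact statement you would need to switch to the linearized model (or replace AGD by an excessive-gap/primal-dual scheme that avoids the logarithmic termination cost).
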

See Appendix \ref{sec:proof_finite_minimax_gd} for a proof.
%Current best rate is $O(m/\varepsilon^5)$ oracle calls~\cite{jin2019minmax}.
%{\color{blue}
Current best rate for this problem is achieved by subgradient methods.  
As the subgradient of a finite minimax function $\nabla_{i^*} f(x)$ is easy to evaluate, where $i^* \in \arg\max_{i} f_i(x)$, 
a rate of $O(m/\varepsilon^4)$ first-order oracle and function calls  is achieved by the state-of-the-art subgradient method in \cite{davis2018stochastic}. 
% Notice that, due to the finite minimax structure, a sub-gradient could be computed easily as $\nabla_{i^*} f(x)$, where $i^* \in \arg\max_{i} f_i(x)$.
%}
We can obtain a similar result using Algorithm~\ref{algo:minimax_stronglycvx_cve_agd} but it requires extension to non-Euclidean settings with the framework of Bregman divergences. This is fairly standard and will be updated in the next version of the paper.
%setting up the mirror-map appropriately in the inner loop (Algorithm~\ref{algo:minimax_stronglycvx_cve_agd}). 

\if0
This algorithm is designed to solve \eqref{prob:finite-minimax}  \cite{nesterov1998introductory}.  
At each step, one minimizes over the quadratic approximation of the form: 
\begin{eqnarray}
	f(x; x_k)  \;\; = \;\; \max_{i\in[m]}  f_i(x_k) + \langle \nabla f_i(x_k) , x-x_k \rangle + \frac{L}{2}\|x_k-x\|^2 \;, 
	\label{eq:gradmap}
\end{eqnarray}
and update $x_{k+1}$ as 
$x_{k+1} =  \arg\min_x f(x;x_k)$. 
Assuming 
the $f_i(x)$'s are  {\em convex} and $L$-smooth, 
and the inner strongly-convex optimization is solved exactly, 
 $k$ steps of the update ensures that  
%For $G$-smooth convex  $f_i$'s, 
%Nesterov's minimization scheme 
the function value is bounded by \cite[Theorem 2.3.5]{nesterov1998introductory}:
\begin{align}
\frac\lambda{2} \|\nabla_\lambda f(x)\|^2
\;\; \leq \;\; 
f(x_k) - f^* 
%&\leq \frac{4L}{(2\sqrt{L} + k \sqrt{\gamma_0})^2} [f(x_0) - f^* + \frac{\gamma_0}{2} \|x_0 - x^* \|^2] \nonumber \\ 
%&\overset{\alpha_0 = 0.5}{\leq} 
\;\; \leq \;\; 
\frac{8}{k^2} \Big[\, f(x_0) - f^* + \frac{L}{4} \|x_0 - x^* \|^2\,\Big]
\;\; \leq \;\; 
\frac{6L}{k^2} \|x_0 - x^* \|^2
\,.
\end{align} 
%It is sufficient to have $O(\sqrt{G/\varepsilon})$ iterations to reach $\varepsilon$ close to the optimal solution.
%where $\gamma_0 = \frac{\alpha_0^2G}{1-\alpha_0}$. 
% However when $f_i(x)$ is non-convex it is not clear what would be a good algorithm. 

We propose using the excessive gap technique from \cite{nesterov2005excessive} to approximately solve the 
inner strongly-convex optimization of Eq.~\eqref{eq:finite_inner} in Algorithm~\ref{algo:minimax_gd}. 
It takes $4G\sqrt{\frac{\log m}{\tepsilon L}}$ iterations to solve 
an $G$-Lipschitz  $L$-strongly convex minimization up to $\tepsilon/4$ (defined in Algorithm~\ref{algo:minimax_gd}) accuracy. 
For {\em non-convex but $L$-smooth} components $f_i(x)$'s, the following theorem 
guarantees that we find an $\varepsilon$-FOSP in $O(1/\varepsilon^3)$ operations. A proof is provided in Appendix \ref{sec:proof_finite_minimax_gd}.
\fi
%!TEX root = ./smooth_minimax.tex
%{\color{blue}
\section{Experiments}\label{sec:experiments}
We empirically verify the performance of \lqafncc (Algorithm \ref{algo:minimax_gd}) on a synthetic finite max-type nonconvex minimization problem (\ref{prob:finite-minimax}). 
% and compare it against the sub-gradient descent \cite{davis2018stochastic}, which achieves a rate of $\tilde{O}(m/\varepsilon^3)$. 
 We consider the following problem.
\begin{align}
	\min_{x \in \reals^2}  \big[ f(x) = \max_{1 \leq i \leq m=9} f_i(x) \big]
\end{align}
where $f_i(x) = q_{(-1,\, (X^{(1)}_i, X^{(2)}_i),\, C_i)}(x)$  for all $1 \leq i \leq 8$, where $q_{(a, b, c)}(x) = a\|x - b\|_2^2 + c$, 
$X^{(1)}_i$ and $X^{(2)}_i$ are generated from the interval $[-3.0, 3.0]$ uniformly at random,
 and $C_i$ is generated from the interval $[1.0, 5.0]$ uniformly at random.  
 We fix the last component $f_9(x) = q_{(0.5,\, (0, 0),\,0)}(x)$.  
 Each $f_i$ is smooth with parameter $L=1$, which implies that $f$ is $L$-weakly convex.

We implement three algorithms: 
\lqafncc (Algorithm \ref{algo:minimax_gd}), \alqafncc (Algorithm \ref{algo:minimax_gd_adapt}), and subgradient method \cite{davis2018stochastic}. In \lqafncc, we use excessive gap technique \cite[Problem (7.11)]{nesterov2005excessive} (a primal-dual algorithm) to solve the inner sub-problem. 
As the stopping criteria $\wf(x_{k+1}; x_k) \leq \min_x \wf(x; x_k) + \tepsilon/4$ cannot be directly checked, 
we instead check a sufficient condition; we stop the excessive gap technique 
when the primal-dual gap is less than $\tepsilon/4$, which can be checked efficiently.  
\alqafncc is a variant of \lqafncc, where we adaptively and successively decrease the tolerance parameter $\varepsilon'$ starting from a large tolerance $\varepsilon_0$. It has the same first-order oracle complexity guarantee as \lqafncc (up to an $O(\log(1/\varepsilon))$ factor). 
% Theorem \ref{thm:adap_finite_minimax_gd}. 
However, in Figure~\ref{fig:finite_minimax_moreau}, we observe that \alqafncc can converge faster in practice. 
We set the initial tolerance $\varepsilon_0$ as $10.0$. 
For a description of the algorithm we refer to Appendix \ref{sec:adaptive_finite_minimax_gd}.

\begin{figure}[h]
	\centering
	\includegraphics[width=0.6\linewidth]{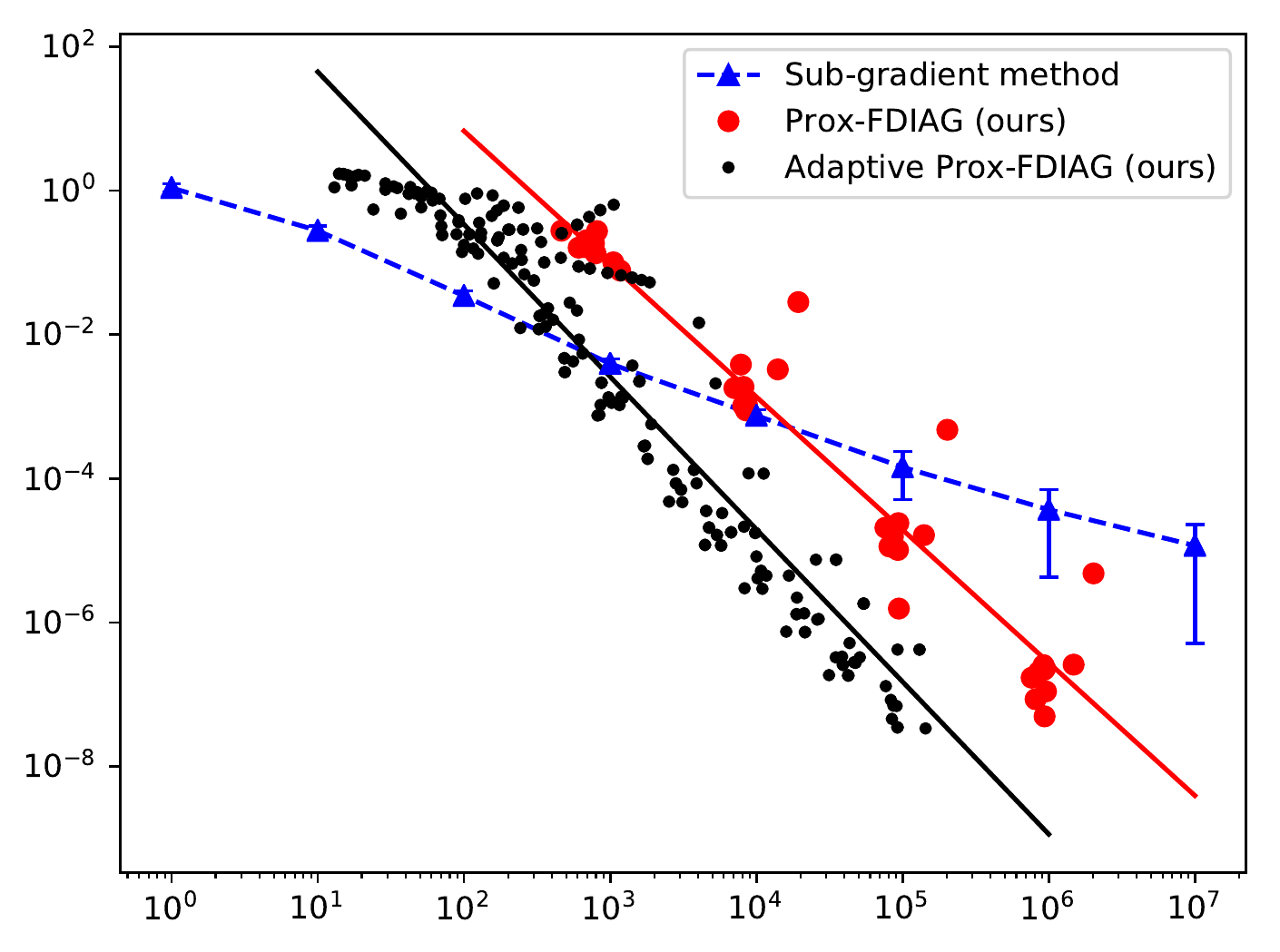}
	\put(-345, 100){{Norm of the gradient}}
	\put(-345, 85){{of Moreau envelope}}
	\put(-335, 69){{$\| \nabla f_{\frac1{2L}}(x_k)\|_2$}}
%	\put(-200, -10){{Number of inner most iterations ($k$)}}
	\put(-200, -10){{number of gradient oracle accesses $k$ }}
	\caption{%norm of gradient of Moreau envelope ($\| \nabla f_{\frac1{2L}}(x_k)\|_2$) against the number of iterations $k$  
	%for \lqafncc (ours), \alqafncc (ours), and the sub-gradient descent methods. 
	For small target accuracy $\varepsilon$ regime, \alqafncc (ours) has the fastest convergence rate followed by \lqafncc (ours) and subgradient method.}
	\label{fig:finite_minimax_moreau}
\end{figure}

All the algorithms are initialized with the point $x_0=(4, 4)$ and are given a Lipschitzness parameter of $G = 2\,L\, \|x_0\|_2$. 
We run the algorithms ten times with randomly generated instances of the objective function $f(x)$. 
In Figure \ref{fig:finite_minimax_moreau}, we plot the norm of gradient of Moreau envelope $\| \nabla f_{\frac1{2L}}(x_k)\|_2$ against the number of iterations $k$ in log-log scale. We compute the gradient of the Moreau envelope at any point $x$, by solving the corresponding convex-concave saddle point problem \eqref{eq:moreau-reformula} using Mirror-Prox \cite{nemirovski2004prox} method with appropriate primal-dual gap based stopping criteria and then using Lemma \ref{lem:moreau-properties}(c).
For \lqafncc ({\color{red} red circles}), we show in a scatter plot the gradient norm $\| \nabla f_{\frac1{2L}}(x_{K(\varepsilon)})\|_2$ at the final output of \lqafncc $x_{K(\varepsilon)}$ versus the total number of inner iterations (of excessive gap technique) taken, for $\varepsilon=10^{0}, 10^{-1}, 10^{-2}, 10^{-3}$ over the 10 functions. For \alqafncc ({\color{black} black dots}) in a scatter plot, we plot the gradient norm $\| \nabla f_{\frac1{2L}}(x')\|_2$ at the output $x'$ of each inner sub-problem (excessive gap technique) of each inner \lqafncc step versus the total number of inner iterations (of excessive gap technique) taken to reach that point from the beginning, for $\varepsilon=10^{-7}$ over the 10 functions. For \lqafncc and \alqafncc, using solid {\color{red}red} and black (respectively) lines we also plot the best linear function (in log-scale) which fits the scatter points (using default parameters of \texttt{scipy.stats.linregress}\footnote{\url{https://docs.scipy.org/doc/scipy/reference/generated/scipy.stats.linregress.html}}). 
For the subgradient method ({\color{blue} blue triangles}), we plot the mean and standard error of gradient norm $\max_{0 \leq k' \leq k}\| \nabla f_{\frac1{2L}}(x_{\hat{k}(k')})\|_2$ over the 10 instances  at iterations $k=10^0, 10^1, \ldots, 10^7$. 
The estimate at each iteration is the best one so far in the function value, i.e.~$\hat{k}(k) \in {\arg\min}_{0 \leq k' \leq k} f(x_{k'})$. 
We see that, \lqafncc and \alqafncc have a faster convergence rate than subgradient method. Further, in the same vein as analogous variants in convex non-smooth optimization, \alqafncc is faster than \lqafncc almost always.

Subgradient method has a theoretical convergence rate of $O(\frac1{\sqrt{K}})$ for a fixed number of iterations $K$ and a constant step-size $\gamma/{\sqrt{K+1}}$   \cite[Corollary 2.2]{davis2018stochastic}. However, similar to the case of convex non-smooth problems, we observe that fixed step-size results in a  slow convergence. 
In our experiments, we achieve a faster convergence for the subgradient method by 
using a diminishing, non-summable but square-summable step-size,  $\gamma/{\sqrt{k+1}}$, which varies with the iteration number $k$. This step-size has convergence rate of $O(\frac{\log(k)}{\sqrt{k}})$ \cite[Theorem 2.1]{davis2018stochastic}, but in practice we observe a faster convergence rate than the constant step-size. After a very simple parameter search, we set $\gamma$ as $0.1 \times G \times L^{3/2}$. We ran subgradient method for a total of $K=10^7$ number of iterations. Since, subgradient method is not a descent method, at any iteration $k$, we keep track of the best point among all the points we have observed so far, $\{x_0, \cdots, x_{k-1}\}$. Ideally, we should keep track of the point with the minimum norm for the gradient of the Moreau envelope, $\|\nabla f_{\frac1{2L}}(x_k)\|_2$, but since the computation of the gradient of Moreau envelope is costly, we only keep track of the point with the minimum function value we have observed so far.
%}
\section{Conclusion}\label{sec:conc}
In this paper, we study smooth minimax problems, where the maximization is concave but the minimization is either strongly convex or nonconvex. In both of these settings, we present new algorithms improving state-of-the-art. The key ideas are i) a novel way to combine Mirror-Prox and Nesterov's AGD for strongly convex case that can tightly bound primal-dual gap and ii) an inexact prox method with good convergence rate to stationary points for the nonconvex case. While we only present our results for the Euclidean setting, generalizing it to non-Euclidean settings with the framework of Bregman divergences should be straight forward. Finally, we showcase the empirical superiority of our nonconvex algorithm over state-of-the-art subgradient method for a case of finite max-type nonconvex minimization problems. Some of the more interesting questions would be to understand the optimality of the rates that we obtain and dependence on the strong convexity parameter. Further extensions of these results to the stochastic setting would also be quite interesting.

\ifarxiv
\printbibliography
\else
\bibliographystyle{plain}
\bibliography{optimization}
\fi

\newpage
\appendix

% ========================================================================
% !TEX root = ./smooth_minimax.tex
\section*{Appendix}
% ========================================================================

\section{Nesterov's accelerated gradient descent}\label{sec:nesterov}
\begin{algorithm}[h]
	\DontPrintSemicolon % Some LaTeX compilers require you to use \dontprintsemicolon instead
	\KwIn{ Smooth concave function $h(\cdot)$, learning rate $\frac{1}{\beta}$, initial points $y_0$ and $z_0$}
	\KwOut{$y_k$}
	\For{$k = 0, 1, \ldots$} {
		$w_k \leftarrow (1-\tau_k) y_k + \tau_k z_k$, $\ y_{k+1} \leftarrow \PY{w_k + \frac{1}{\beta} \nabla h(w_k)}$, $z_{k+1} \leftarrow \PY{z_k + \eta_k \nabla h(w_k)}$
	}
	\caption{Nesterov's accelerated gradient ascent}
	\label{algo:AGD}
\end{algorithm}

Nesterov's accelerated gradient descent~\cite{nesterov1983method} is an optimal method for minimizing smooth convex functions (or equivalently maximizing smooth concave functions). In order to simplify the exposition in the sequel, we will consider the algorithm for maximizing concave functions. The pseudocode for this is presented in Algorithm~\ref{algo:AGD}. Fix any point $y \in \mathcal{Y}$. Consider the potential function
\begin{align*}
\Phi(k) \defeq k(k+1)\left(h(y)-h(y_k)\right) + 2\beta\cdot \norm{y - z_k}^2. \label{eq:agd_potential}
\end{align*}
The following lemma (from~\cite{bansal2017potential}) is the key result that helps us obtain the convergence rate of Algorithm~\ref{algo:AGD}. Here $\PY{\cdot}$ denotes projection onto $\cY$.
\begin{lemma}\label{lem:agd-pf}~\cite{bansal2017potential}
	Suppose $h(\cdot)$ is an $L$-smooth concave function and the parameters of Algorithm~\ref{algo:AGD} are chosen so that $\beta > L$, $\eta_k = \frac{k+1}{2\beta}$ and $\tau_k = \frac{2}{k+2}$. Then, we have
	\begin{align*}
	\Phi(k+1) \leq \Phi(k).
	\end{align*}
\end{lemma}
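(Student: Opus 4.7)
My plan is to verify the potential decrease by expanding $\Phi(k+1)-\Phi(k)$ directly, linearizing the function-value differences via concavity/smoothness of $h$ at the query point $w_k$, and then combining with the variational inequalities from the two projection steps together with the algebraic identity defining $w_k$.

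The opening move is the bookkeeping identity $(k+1)(k+2) = k(k+1) + 2(k+1)$, which yields
\begin{align*}
\Phi(k+1) - \Phi(k) = 2(k+1)[h(y)-h(y_{k+1})] + k(k+1)[h(y_k)-h(y_{k+1})] + 2\beta[\|y-z_{k+1}\|^2 - \|y-z_k\|^2].
\end{align*}
For each function-value increment I will linearize at $w_k$: for any $u \in \cY$, concavity gives $h(u) \le h(w_k) + \langle \nabla h(w_k), u-w_k\rangle$, and $L$-smoothness gives $h(y_{k+1}) \ge h(w_k) + \langle \nabla h(w_k), y_{k+1}-w_k\rangle - (L/2)\|y_{k+1}-w_k\|^2$, so that $h(u)-h(y_{k+1}) \le \langle \nabla h(w_k), u-y_{k+1}\rangle + (L/2)\|y_{k+1}-w_k\|^2$. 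Applying this at $u=y$ and $u=y_k$ handles the first two pieces. For the third piece, the variational inequality for $z_{k+1} = \mathcal{P}_\cY(z_k + \eta_k\nabla h(w_k))$ combined with the three-point identity and the choice $\eta_k = (k+1)/(2\beta)$ gives
\begin{align*}
2\beta[\|y-z_{k+1}\|^2 - \|y-z_k\|^2] \le -2\beta\|z_{k+1}-z_k\|^2 - 2(k+1)\langle \nabla h(w_k), y-z_{k+1}\rangle.
\end{align*}

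Plugging the three bounds into the decomposition, the terms that depend on the free point $y$ appear only as $2(k+1)\langle \nabla h(w_k), y-y_{k+1}\rangle - 2(k+1)\langle \nabla h(w_k), y-z_{k+1}\rangle = 2(k+1)\langle \nabla h(w_k), z_{k+1}-y_{k+1}\rangle$, so the arbitrary comparator $y$ drops out. Using $\tau_k = 2/(k+2)$, equivalently $ky_k + 2z_k = (k+2)w_k$, I will reorganize the remaining linear terms using $k y_k - (k+2)y_{k+1} + 2 z_{k+1} = (k+2)(w_k - y_{k+1}) + 2(z_{k+1}-z_k)$, which collapses everything into $(k+1)(k+2)\langle \nabla h(w_k), w_k - y_{k+1}\rangle + 2(k+1)\langle \nabla h(w_k), z_{k+1}-z_k\rangle$.

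The remaining step is to show that this combination is dominated by the accumulated negative quadratic terms. The variational inequality for $y_{k+1} = \mathcal{P}_\cY(w_k + \frac{1}{\beta}\nabla h(w_k))$ with comparison point $u = w_k \in \cY$ gives the sharp bound $\langle \nabla h(w_k), w_k - y_{k+1}\rangle \le -\beta\|w_k-y_{k+1}\|^2$, and the cross term involving $z_{k+1}-z_k$ gets absorbed into the $-2\beta\|z_{k+1}-z_k\|^2$ quadratic via Young's inequality, leaving a residual of the form $C\|y_{k+1}-w_k\|^2$. A direct computation shows this final coefficient is $-\tfrac{k+1}{2}\bigl[(k+3)\beta - (k+2)L\bigr]$, which is non-positive as soon as $\beta \ge \tfrac{k+2}{k+3}L$, and in particular whenever $\beta > L$. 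The main subtlety I anticipate is handling the constrained (projected) update for $y_{k+1}$ carefully, since the unconstrained identity $\nabla h(w_k) = \beta(y_{k+1}-w_k)$ fails at the boundary of $\cY$; I will resolve this by invoking the strong-concavity form of the prox optimality for $y_{k+1}$ a second time, with comparison point $u = z_{k+1} \in \cY$, which yields the required negative slack $-\tfrac{\beta}{2}\|z_{k+1}-y_{k+1}\|^2$ to complete the absorption.
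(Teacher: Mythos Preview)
Your decomposition, the linearization at $w_k$, the variational inequality for $z_{k+1}$, and the algebraic reorganization
\[
k\,y_k-(k+2)\,y_{k+1}+2\,z_{k+1}\;=\;(k+2)(w_k-y_{k+1})+2(z_{k+1}-z_k)
\]
are all correct and essentially coincide with the paper's argument. The gap is in your last paragraph. Once you split the linear part into $(k+1)(k+2)\langle\nabla h(w_k),w_k-y_{k+1}\rangle$ and $2(k+1)\langle\nabla h(w_k),z_{k+1}-z_k\rangle$ and spend the prox inequality at $u=w_k$ on the first piece, you have no upper bound left for the second piece in the constrained case: Young's inequality needs $\nabla h(w_k)=\beta(y_{k+1}-w_k)$, which you already flagged as unavailable, and the proposed rescue via the prox inequality at $u=z_{k+1}$ controls $\langle\nabla h(w_k),z_{k+1}-y_{k+1}\rangle$, not $\langle\nabla h(w_k),z_{k+1}-z_k\rangle$; moreover it introduces $\tfrac{\beta}{2}\|z_{k+1}-w_k\|^2$, which is not dominated by any of your negative quadratics (recall $w_k-z_k=(1-\tau_k)(y_k-z_k)$ can be large). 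So the absorption does not close.

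The fix---and this is exactly the paper's device---is to not split the two linear pieces at all. Your own identity shows that their sum equals $(k+1)(k+2)\langle\nabla h(w_k),\,v-y_{k+1}\rangle$ with $v\defeq(1-\tau_k)y_k+\tau_k z_{k+1}\in\cY$, and crucially $v-w_k=\tau_k(z_{k+1}-z_k)$. One application of the prox optimality for $y_{k+1}$ at the comparison point $v$ then gives
\[
\langle\nabla h(w_k),\,v-y_{k+1}\rangle\;\le\;\tfrac{\beta}{2}\|v-w_k\|^2-\tfrac{\beta}{2}\|y_{k+1}-w_k\|^2
\;=\;\tfrac{\beta\tau_k^2}{2}\|z_{k+1}-z_k\|^2-\tfrac{\beta}{2}\|y_{k+1}-w_k\|^2,
\]
after which the $\|z_{k+1}-z_k\|^2$ coefficient becomes $\tfrac{2(k+1)\beta}{k+2}-2\beta=-\tfrac{2\beta}{k+2}\le 0$ and the $\|y_{k+1}-w_k\|^2$ coefficient becomes $\tfrac{(k+1)(k+2)}{2}(L-\beta)\le 0$. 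The paper executes this same step by bounding $h(w_k)-h(y_{k+1})$ directly via the comparison point $v$, which is equivalent.
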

\begin{proof}[Proof of Lemma~\ref{lem:agd-pf}]
	Writing
	\begin{align}
	\Phi(k+1)-\Phi(k) = & (k+1)(k+2)\left(h(w_k)-h(y_{k+1})\right) \\ 
	&- k(k+1) \left(h(w_k)-h(y_k)\right) + 2(k+1) \left(h(y) - h(w_k)\right) \nonumber \\ &+ 2 \beta \left(\norm{z_{k+1}-y}^2 - \norm{z_{k}-y}^2\right), \label{eqn:mainterm}
	\end{align}
	we bound the three terms appearing in separate lines above. Firstly, for the third term, $\norm{z_{k+1}-y}^2 \leq \norm{z_k + \eta_k \nabla h(w_k) - y}^2 - \norm{z_{k+1}-z_k - \eta_k \nabla h(w_k)}^2$ due to Pythagoras theorem and so
	\begin{align}
	\norm{z_{k+1}-y}^2 - \norm{z_{k}-y}^2 &\leq 2 \eta_k \iprod{\nabla h(w_k)}{z_k - y} + \eta_k^2\norm{\nabla h(w_k)}^2 - \norm{z_{k+1}-z_k - \eta_k \nabla h(w_k)}^2 \nonumber \\
	&\leq 2 \eta_k \iprod{\nabla h(w_k)}{z_{k+1}-y} { - \|z_{k+1} - z_{k}\|^2}.\label{eqn:term3}
	\end{align}
	
	For the second term, we have
	\begin{align}
	&-k(k+1)\left(h(w_k)-h(y_k)\right) + 2(k+1)\left(h(y)-h(w_k)\right) \nonumber \\
	&\leq -k(k+1)\iprod{\nabla h(w_k)}{w_k - y_k} + 2(k+1) \iprod{\nabla h(w_k)}{y - w_k} = 2(k+1)\iprod{\nabla h(w_k)}{y - z_k}\label{eqn:term2}
	\end{align}
	
	Finally, for the first term, we have $h(y_{k+1})-h(w_k) \geq \iprod{\nabla h(w_k)}{{y_{k+1}}-w_k} - \frac{\beta}{2} \norm{{y_{k+1}}-w_k}^2$. Since $y_{k+1} = \argmax_{\bar{y} \in \mathcal{Y}} \iprod{\nabla h(w_k)}{\bar{y}-w_k} - \frac{\beta}{2} \norm{\bar{y}-w_k}^2$, we have for $v\defeq (1-\tau_k)y_k + \tau_k z_{k+1} \in \mathcal{Y}$,
	\begin{align}
	& h(y_{k+1})-h(w_k) \geq \iprod{\nabla h(w_k)}{{y_{k+1}}-w_k} - \frac{\beta}{2} \norm{{y_{k+1}}-w_k}^2 \nonumber \\ &\geq \iprod{\nabla h(w_k)}{{v}-w_k} - \frac{\beta}{2} \norm{{v}-w_k}^2 = \tau_k \iprod{\nabla h(w_k)}{z_{k+1}-z_k} - \frac{\beta\tau_k^2}{2} \norm{z_{k+1}-z_k}^2,\label{eqn:term1}
	\end{align}
	where we used $w_k = (1-\tau_k)y_k + \tau_k z_{k}$ in the last step.
	%	 is a smooth function. 
	Substituting~\eqref{eqn:term1},~\eqref{eqn:term2} and~\eqref{eqn:term3} in~\eqref{eqn:mainterm} proves the lemma.
\end{proof}

\section{Proofs}
\label{sec:proofs}

% ========================================================================
\subsection{Auxiliary lemma}

%As $L\|x-x_k \|^2$ is $2L$-strongly convex, the following lemma ensures strong convexity of Eq.~\eqref{eq:gradmap_general}. 
% $g(x, y) + L\|x-x_k \|^2$ is $L$-strongly convex in $x$. 
%A proof is provided in Appendix \ref{sec:weak-plus-strong-cvx_proof}.
\begin{lemma}\label{lem:weak-plus-strong-cvx}
If $f(x)$ is a $L$-weakly convex function and $\tilde{f}(x)$ is a $\tilde{\sigma}(\geq L)$-strongly convex differentiable function, then $f(x) + \tilde{f}(x)$ is $(\tilde{\sigma}-L)$-strongly convex.
\end{lemma}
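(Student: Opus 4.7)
The plan is to work directly from the subgradient-inequality characterizations. By Definition~\ref{def:weak-convex}, $L$-weak convexity of $f$ means that for every $x$ and every Fr\'echet subgradient $u_x \in \partial f(x)$,
\begin{equation*}
f(x') \;\geq\; f(x) + \langle u_x, x'-x\rangle - \tfrac{L}{2}\|x'-x\|^2 \qquad \forall x'.
\end{equation*}
Since $\tilde{f}$ is differentiable and $\tilde{\sigma}$-strongly convex, the standard strong-convexity inequality gives
\begin{equation*}
\tilde{f}(x') \;\geq\; \tilde{f}(x) + \langle \nabla \tilde{f}(x), x'-x\rangle + \tfrac{\tilde{\sigma}}{2}\|x'-x\|^2 \qquad \forall x'.
\end{equation*}

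Next I would add the two inequalities and rearrange to obtain
\begin{equation*}
(f+\tilde{f})(x') \;\geq\; (f+\tilde{f})(x) + \langle u_x + \nabla \tilde{f}(x), x'-x\rangle + \tfrac{\tilde{\sigma}-L}{2}\|x'-x\|^2.
\end{equation*}
Finally, I would invoke the sum rule for the Fr\'echet subdifferential in the case when one summand is differentiable: $\partial (f+\tilde{f})(x) = \partial f(x) + \{\nabla \tilde{f}(x)\}$, so that $u_x + \nabla \tilde{f}(x)$ ranges exactly over $\partial(f+\tilde{f})(x)$ as $u_x$ ranges over $\partial f(x)$. This is precisely the subgradient inequality defining $(\tilde{\sigma}-L)$-strong convexity (which is meaningful because $\tilde{\sigma} \geq L$ ensures the constant is nonnegative), completing the proof.

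The only even mildly nontrivial step is the sum rule for the Fr\'echet subdifferential; this is standard (it follows from a direct limit-inferior calculation on $((f+\tilde{f})(x')-(f+\tilde{f})(x)-\langle u_x+\nabla \tilde{f}(x),x'-x\rangle)/\|x'-x\|$ using differentiability of $\tilde{f}$), so if a self-contained treatment is preferred I would just verify the defining $\liminf$ inequality for $u_x + \nabla \tilde{f}(x)$ directly rather than appeal to the general sum rule.
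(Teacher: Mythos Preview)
Your proposal is correct and follows essentially the same approach as the paper's proof: add the weak-convexity inequality for $f$ to the strong-convexity inequality for $\tilde{f}$, then invoke the Fr\'echet subdifferential sum rule $\partial(f+\tilde{f}) = \partial f + \nabla \tilde{f}$ (the paper cites Kruger's Corollary 1.12.2 for this) to conclude.
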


\begin{proof}
%[Proof of Lemma \ref{lem:weak-plus-strong-cvx}]
\label{sec:weak-plus-strong-cvx_proof}
Since $f$ is $L$-weakly convex and $f$ is $\sigma$-strongly convex we get that,
\begin{align}
f(x') &\geq f(x) + \Ip{u_x}{x'-x} - \frac{L}{2} \|x'-x\|^2 \,, \nonumber \\
\tilde{f}(x') &\geq \tilde{f}(x) + \Ip{\nabla \tilde{f}(x)}{x'-x} + \frac{\tilde{\sigma}}{2} \|x'-x\|^2 \,, \nonumber \\
\implies f(x')+\tilde{f}(x') &\geq f(x)+\tilde{f}(x) + \Ip{u_x + \nabla \tilde{f}(x)}{x'-x} + \frac{\tilde\sigma -L}{2} \|x'-x\|^2 \,.
\end{align}
where $u_x \in \partial f(x)$. We finish the proof by noting that $\partial (f+\tilde{f}) = \partial f+ \nabla \tilde{f}$ \cite[Corollary 1.12.2.]{kruger2003frechet}.
\end{proof}

% ========================================================================
\subsection{Proof of Lemma~\ref{lem:moreau-properties}}
\label{sec:proof_moreau-properties}
We re-write $f_{\lambda}(x)$ as minimum value of a $(\frac1{\lambda} -L)$-strong convex function $\phi_{\lambda, x}$, as $f$ is $L$-weakly convex (Definition~\ref{eq:weakly-cvx}) and $\frac1{2\lambda} \|x-x'\|^2$ is differentiable and $\frac1{\lambda}$-strongly convex (Lemma \ref{lem:weak-plus-strong-cvx}),
\begin{align} \label{eq:moreau-reformula}
f_{\lambda}(x) = \min_{x' \in \cX} \bigg[ \phi_{\lambda, x}(x') = f(x') + \frac{1}{2\lambda} \|x - x'\|^2 \bigg]\,.
\end{align}
Then first part of (a) follows trivially by the strong convexity. For the second part notice the following,
\begin{align}
\min_x f_{\lambda}(x) &= \min_x \min_{x'} f(x') + \frac1{2\lambda} \|x-x' \|^2 \nonumber \\
&= \min_{x'} \min_x  f(x') + \frac1{2\lambda} \|x-x' \|^2 \nonumber \\
&= \min_{x'} f(x') \nonumber
\end{align}
Thus $\arg\min_x f_{\lambda}(x) = \arg\min_x f(x)$. For $(b)$ we can re-write the Moreau envelope $f_\lambda$ as,
\begin{align}
f_\lambda(x) &= \min_x f(x') + \frac1{2\lambda} \| x - x' \|^2 \nonumber \\
&= \frac{\|x\|^2}{2\lambda} - \frac1{\lambda} \max (x^Tx' -  \lambda f(x') - \frac{\|x'\|^2}2) \nonumber \\
&= \frac{\|x\|^2}{2\lambda} - \frac1{\lambda} \bigg(\lambda f(x') + \frac{\|x'\|^2}2\bigg)^*(x) \label{eq:moreau-conjugate}
\end{align}
where $(\cdot)^*$ is the Fenchel conjugation operator. Since $L < 1/\lambda $, using $L$-weak convexity of $f$, it is easy to see that $\lambda f(x') + \frac{\|x'\|^2}2$ is $(1 - \lambda L)$-strongly convex, therefore its Fenchel conjugate would be $\frac1{(1 - \lambda L)}$-smooth \cite[Theorem 6]{kakade2009duality}. This, along with $\frac1\lambda$-smoothness of first quadratic term \textit{}implies that $f_\lambda(x)$ is $\big(\frac1\lambda + \frac1{\lambda(1 - \lambda L)}\big)$-smooth, and thus differentiable.

For $(c)$ we again use the reformulation of $f_{\lambda}(x)$ as $\min_{x' \in \cX} \phi_{\lambda, x}(x')$ \eqref{eq:moreau-reformula}. Then by first-order necessary condition for optimality of $\hat{x}_{\lambda}(x)$, we have that $x - \hat{x}_{\lambda}(x) \in \lambda \partial f(x)$. Further, from proof of part (a) we have that $\phi_{\lambda, x}(x')$ $(1 - \lambda L)$-strongly-convex in $x'$ and it is quadratic (and thus convex) in $x$. Then we can use Danskin's theorem \cite[Section 6.11]{bertsekas2009convex} to prove that, $\nabla f_\lambda (x) = (x - \hat{x}_\lambda(x))/\lambda \in \partial f(x)$.

\subsection{Proof of Lemma~\ref{lem:weakly-cvx-smooth}}
\label{sec:proof_weakly-cvx-smooth}
It is easy to see that $g(\cdot, y)$ is $L$-weakly convex if it is $L$-smooth: $g(x', y) \geq g(x, y) + \Ip{\nabla_{x} g(x, y)}{x'-x} -  \frac{L}{2} \|x' - x\|^2$. Thus we only need to prove the case of $L$-weakly convex $g(\cdot, y)$.
Since $g(\cdot, y)$ is $L$-weakly convex we get that,
\begin{align*}
g(x', y) &\geq g(x, y) + \Ip{u_{x, y}}{x'-x} -  \frac{L}{2} \|x' - x\|^2 \nonumber \\
\implies g(x', y) + \frac{L}2 \|x'\|^2 &\geq g(x, y) + \frac{L}2 \|x\|^2 + \Ip{u_{x, y} + L x}{x'-x}
\end{align*}
where $u_{x, y} \in \partial_x g(x, y)$. This means that $\tilde{g}(x, t) \defeq g(x, y) + \frac{L}2 \|x\|^2$ is convex, since $\partial_{x} \tilde{g}(x, y) = \partial_{x} {g}(x, y) + L x$ \cite[Corollary 1.12.2.]{kruger2003frechet}.

Let $\tilde{f}(x) = \max_{y \in \cY} \tilde{g}(x, y)$. Since $\tilde{g}(x, y)$ is convex in $x$ an smooth (Definition \ref{def:smooth}), and $\cY$ is compact set we use Danskin's theorem \cite[Section 6.11]{bertsekas2009convex} to prove that,
\begin{align}
\partial \tilde{f}(x) &= {\rm conv} \{ \partial_x \tilde{g}(x, y^*) \,|\, y^* \in \arg\max_{y \in \cY} \tilde{g}(x, y)\} \,, \nonumber \\
\implies \partial {f}(x) + Lx &= {\rm conv} \{ \partial_x {g}(x, y^*) + L x \,|\, y^* \in \arg\max_{y \in \cY} {g}(x, y)\} \,, \nonumber \\
\implies \partial {f}(x) &= {\rm conv} \{ \partial_x {g}(x, y^*) \,|\, y^* \in \arg\max_{y \in \cY} {g}(x, y)\} \,. \label{eq:danskin-differential-smooth}
\end{align}
where the second to last step comes from the facts that $\partial \tilde{f} = \partial f + Lx$, $\partial_x \tilde{g}(x, y) = \partial_x g(x, y) + Lx$ \cite[Corollary 1.12.2.]{kruger2003frechet}, and $\arg\max_{y \in \cY} \tilde{g}(x, y) = \arg\max_{y \in \cY} {g}(x, y) + \frac{L}2 \|x\|^2 = \arg\max_{y \in \cY} {g}(x, y)$. Let $u_{x, y} \in \partial_x g(x, y)$ and $y^* \arg\max_{y \in \cY} g(x, y)$then,
\begin{align*}
&f(x') \geq g(x', y^*) \overset{(a)}\geq g(x, y^*) + \Ip{u_{x, y^*}}{x'-x} -  \frac{L}{2} \|x' - x\|^2 \\
\overset{(b)}\implies &f(x') \geq f(x) + \Ip{v_x}{x'-x} -  \frac{L}{2} \|x' - x\|^2 
\end{align*}
where $(a)$ uses $L$-weak convexity of $g(\cdot, y)$, and $(b)$ uses \eqref{eq:danskin-differential-smooth} and $v_x \in \partial f(x)$.

% ========================================================================
%\subsection{Proof of Lemma~\ref{lem:weakly-cvx}}
%\label{sec:proof_weakly-cvx-finite}
%
%We can re-write $f(x) = \max_{i \in [m]} f_i(x)$ as $\max_{y \in \simplex_m} g(x, y)$ where $g(x, y) = \sum_{i \in [m]} y_i f_i(x)$, where $\simplex_m  = \{y \in [0, 1]^m | \sum_{i \in [m]} y_i = 1 \}$. Since $f$ and $L$-smooth in $x$ and continuous in $y$, and $\simplex_m$ is compact set we use Danskin's theorem \cite[Section 6.11]{bertsekas2009convex} to prove that,
%\begin{align} \label{eq:danskin-differential-finite}
%\partial f(x) = {\rm conv} \{ \nabla f_{i^*}(x) \,|\, i^* \in \arg\max_{i} f_i(x)\} \,.
%\end{align}
%Then,
%\begin{align*}
%&f(x') \geq f_i(x') \overset{(a)}\geq f_i(x) + \Ip{\nabla f_i(x)}{x'-x} -  \frac{L}{2} \|x' - x\|^2 \\
%\overset{(b)}\implies &f(x') \geq f(x) + \Ip{u_x}{x'-x} -  \frac{L}{2} \|x' - x\|^2 
%\end{align*}
%where $(a)$ uses $L$-smoothness of $f_i$, and $(b)$ uses \eqref{eq:danskin-differential-finite} and  and $u_x \in \partial f(x)$.

% ========================================================================
% !TEX root = ./smooth_minimax.tex

\subsection{Proof of Theorem~\ref{thm:minimax_stronglycvx_cve_agd}}
\label{sec:minimax_stronglycvx_cve_agd_proof}
A cursory glance of the \mpscc{} (Algorithm \ref{algo:minimax_stronglycvx_cve_agd}) reveals that it is a modified version of projected accelerated gradient ascent (Algorithm~\ref{algo:AGD}) on some function of $y$ with a modified step given by \mpstep{}, which is inspired from the conceptual Mirror-Prox method of \cite{nemirovski2004prox}. In the following lemma we analyze the \mpstep{} sub-routine, which is the most non-trivial step of the algorithm.
\begin{lemma}\label{lem:mp-step}
If $\beta = 2{\frac{L^2}{\sigma}}$, the sub-routine {\em \mpstep{}($g$, $L$, $\sigma$, $w$, $\beta$, $\varepsilon_{\rm step}$)} of Algorithm \ref{algo:minimax_stronglycvx_cve_agd}, returns a pair of points $(\hat{x}_{R}, y_{R+1}) \in \cX \times \cY$, such that,
\begin{align} \label{eq:mp-step-conditions}
g(&\hat{x}_{R}, y_{R+1}) \leq \min_x g({x}, y_{R}) + \varepsilon_{\rm step} \text{,\;\;and,\;\;} 
%\\
%&
y_{R} = \PY{w + \frac1\beta \nabla_y g(\hat{x}_{R-1}, w)}
\end{align}
in $R=\ceil{\log_2 \big(({5L D_\cY}/{\sigma}) \sqrt{{L}/{2 \varepsilon_{\rm step}}}\big) }$ iterations with 
$
%\max_{r \in [R]} \ceil{ \sqrt{\frac{L}\sigma}\log \frac{(L + \sigma) \|x_0 - x^*(y_r) \|^2}{2\varepsilon_{\rm step}}} =
O\Big(\sqrt{{L}/\sigma} \log \Big(1/{\varepsilon_{\rm step}}\Big)\Big)$ gradient computations per iterations.
\end{lemma}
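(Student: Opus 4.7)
The plan is to treat \mpstep{} as an inexact damped fixed-point iteration for the implicit equation that defines the conceptual Mirror-Prox step. Introduce the idealized operator $T:\cY\to\cY$ given by $T(y) := \mathcal{P}_\cY\bigl(w + (1/\beta)\nabla_y g(x^*(y), w)\bigr)$, where $x^*(y) := \arg\min_{x\in\cX} g(x, y)$ is unique by $\sigma$-strong convexity of $g(\cdot,y)$. At any fixed point $y^*$ of $T$, the pair $(x^*(y^*), y^*)$ exactly satisfies both conditions that the lemma asks for approximately. The first step is to show $T$ is a $(1/2)$-contraction: by the standard implicit-function argument using $\sigma$-strong convexity and $L$-smoothness of $g$, the map $y\mapsto x^*(y)$ is $(L/\sigma)$-Lipschitz; composing with the $L$-Lipschitz map $x\mapsto\nabla_y g(x,w)$ and non-expansiveness of $\mathcal{P}_\cY$, and using $\beta=2L^2/\sigma$, $T$ has Lipschitz constant at most $L^2/(\sigma\beta)=1/2$. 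So $T$ has a unique fixed point $y^*\in\cY$.

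Next I convert the exact contraction into an inexact-AGD convergence rate. The $\varepsilon_{\rm agd}$-accurate AGD output gives $\|\hat{x}_r - x^*(y_r)\|\le\sqrt{2\varepsilon_{\rm agd}/\sigma}$ via $\sigma$-strong convexity, and substituting into the actual $y$-update produces the perturbed recursion
\[\|y_{r+1}-y^*\| \;\le\; \tfrac{1}{2}\,\|y_r-y^*\| + (L/\beta)\sqrt{2\varepsilon_{\rm agd}/\sigma}.\]
Unrolling from $\|y_0-y^*\|\le D_\cY$ and summing the geometric series gives $\|y_R-y^*\|\le 2^{-R}D_\cY + (2L/\beta)\sqrt{2\varepsilon_{\rm agd}/\sigma}$. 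The algorithm's choices $R=\lceil\log_2(2D_\cY/\varepsilon_{\rm mp})\rceil$ and $\varepsilon_{\rm agd}=\sigma\beta^2\varepsilon_{\rm mp}^2/(32L^2)$ are calibrated precisely so each term is $\le\varepsilon_{\rm mp}/2$, so $\|y_R-y^*\|\le\varepsilon_{\rm mp}$; one more application of the recursion bounds $\|y_{R+1}-y^*\|\le\varepsilon_{\rm mp}$ and hence $\|y_{R+1}-y_R\|\le 2\varepsilon_{\rm mp}$.

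The technical crux is translating this $y$-proximity into the primal-gap guarantee (read as $g(\hat{x}_R, y_{R+1})\le\min_x g(x, y_{R+1})+\varepsilon_{\rm step}$, the quantity the outer algorithm actually consumes; the statement with $y_R$ on the right follows from the same parameter tuning). I plan to apply the Polyak--Lojasiewicz bound to the $\sigma$-strongly convex $g(\cdot, y_{R+1})$:
\[g(\hat{x}_R, y_{R+1}) - \min_x g(x, y_{R+1}) \;\le\; \|\nabla_x g(\hat{x}_R, y_{R+1})\|^2/(2\sigma),\]
and then control the gradient at the ``wrong'' pair via the triangle inequality and $L$-smoothness in $y$: $\|\nabla_x g(\hat{x}_R, y_{R+1})\|\le\|\nabla_x g(\hat{x}_R, y_R)\|+L\|y_{R+1}-y_R\|$. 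The first term is $\le\sqrt{2L\varepsilon_{\rm agd}}$ by the standard smoothness-to-gradient-norm bound applied to $\hat{x}_R$'s unconstrained AGD guarantee for $g(\cdot,y_R)$, and the second is $\le 2L\varepsilon_{\rm mp}$ from the previous paragraph. Substituting the prescribed $\varepsilon_{\rm mp}=(2\sigma/5L)\sqrt{2\varepsilon_{\rm step}/L}$ together with the chosen $\varepsilon_{\rm agd}$ and $\beta$, and collecting constants, collapses the composite bound to $\varepsilon_{\rm step}$. The main obstacle is not an analytical novelty but the tight parameter bookkeeping that makes these prescribed constants land exactly. The complexity claim then follows immediately: each of the $R+1=O\bigl(\log\bigl((LD_\cY/\sigma)\sqrt{L/\varepsilon_{\rm step}}\bigr)\bigr)$ outer iterations runs AGD on a $\sigma$-strongly-convex $L$-smooth problem to accuracy $\varepsilon_{\rm agd}$, costing $O\bigl(\sqrt{L/\sigma}\,\log(1/\varepsilon_{\rm step})\bigr)$ gradient evaluations per iteration, matching the claimed bound.
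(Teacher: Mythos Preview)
Your proposal is correct and follows essentially the same route as the paper: define the implicit operator $T$, establish its $1/2$-contraction via the $(L/\sigma)$-Lipschitzness of $y\mapsto x^*(y)$, unroll the inexact fixed-point recursion with the AGD error, and convert the resulting $y$-proximity into the primal gap at $y_{R+1}$. The only difference is the final conversion: the paper bounds $\|\hat{x}_R - x^*(y_{R+1})\|\le 5L\varepsilon_{\rm mp}/(2\sigma)$ directly (reusing the Lipschitzness of $x^*$) and applies the smoothness upper bound at the minimizer, whereas you bound $\|\nabla_x g(\hat{x}_R, y_{R+1})\|$ and invoke the PL inequality---both routes collapse to exactly $\varepsilon_{\rm step}$ under the prescribed constants.
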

A proof for this lemma is provided in Appendix \ref{sec:mp-step_proof}. The above lemma guarantees that the \mpstep{} sub-routine converges fast (linear time), in $O(\log (1/\varepsilon_{\rm step}))$ steps with $O(\sqrt{{L}/\sigma}  \log^2 (1/\varepsilon_{\rm step}))$ number of gradient computations.

In the rest of the proof we will utilize the recently proposed {\em potential-function} based proof for accelerated gradient decent (AGD) \cite[Section 5.2]{bansal2017potential}. Analyzing AGD using potential-function has an advantage over the standard analysis because, even though AGD does not decrease the function value monotonically the former constructs a potential-function which monotonically decreases over the iterations. Given the guarantees (Lemma \ref{lem:mp-step}) for the \mpstep{} sub-routine we can re-write an iteration of the \mpscc{} algorithm by the following steps:
\begin{empheq}[box=\widefbox]{align}
\tau_k &= \frac2{(k+2)},\;\; \eta_k = \frac{(k+1)}{2\beta} \\
w_{k} &= (1-\tau_{k}) y_k + \tau_k z_k \\
y_{k+1} &= \PY{w_k + \frac1\beta \nabla_y h_{x_{k+1}}(w_k)} \label{eq:update_y}\\
z_{k+1} &= \PY{z_k + \eta_k \nabla_y h_{x_{k+1}}( w_k)}
\end{empheq}
where $h_{{k+1}}(y) \defeq g(x_{k+1}, y)$ such that $g(x_{k+1}, y_{k+1}) \leq  \min_{x \in \cX} g(x, y_{k+1}) + {\varepsilon_{\rm step}}$. That is at iteration $k$, \mpscc{} executes the $k$-th step of the accelerated gradient ascent for the concave function $h_{{k+1}} = g(x_{k+1}, \cdot)$ (Algorithm~\ref{algo:AGD}). As in 
%\cite[Eqn. 5.50]{bansal2017potential} 
\eqref{eq:agd_potential},
for the concave function ${h}_k:  \cY \to \reals$ and an arbitrary reference point $\ty \in \cY$, we define the following potential function for iteration $j$,
\begin{align}
\Phi^{{h}_k}(j) = j(j+1) ({h_k}(\ty) - {h_k}(y_j)) + 2\beta \|z_j - \ty\|^2
\end{align}
Since $g(x, \cdot)$ is $L$-smooth, it is also $\frac{2L^3}\sigma$-smooth ($\sigma \leq L$). 
%Then, using Eqn. (5.52) of \cite{bansal2017potential}
Then, using Lemma \ref{lem:agd-pf}
, we see that for a step-size of $\frac1\beta = \frac\sigma{2L^2}$, the potential function $\Phi^{h_k}(k)$ decrease at step of $k$ of the algorithm: $\Phi^{h_{k+1}}({k+1}) \leq \Phi^{h_{k+1}}({k})$. Thus,
\begin{align}
\Phi^{h_{k+1}}({k+1}) \leq\; &\Phi^{h_{k+1}}({k}) \nonumber \\
=\; &k(k+1) ({h_{k+1}}(\ty) - {h_{k+1}}(y_k)) + 2\beta \|z_k - \ty\|^2 \nonumber \\
=\; &k(k+1) ({h_{k}}(\ty) - {h_{k}}(y_k)) + 2\beta \|z_k - \ty\|^2 + \nonumber \\
&k(k+1) ({h_{k+1}}(\ty) -{h_{k}}(\ty)) + k(k+1) ({h_{k}}(y_k) -  {h_{k+1}}(y_k)) \nonumber \\
=\; &\Phi^{h_{k}}({k}) + k(k+1) (g(x_{k+1}, \ty) - g(x_{k}, \ty)) + k(k+1) (g(x_{k}, y_k) - g(x_{k+1}, y_k)) \nonumber \\
\overset{(a)}\leq\; &\Phi^{h_{k}}({k}) + k(k+1) (g(x_{k+1}, \ty) - g(x_{k}, \ty)) + k(k+1) \varepsilon^{(k)}_{\rm step} \label{eq:mpscc-pot-func-recurrence} \\
\overset{(b)}\implies \Phi^{h_{K}}({K}) \leq\; &\Phi^{h_{0}}({0}) + \sum_{k=0}^{K-1} k(k+1) (g(x_{k+1}, \ty) - g(x_{k}, \ty)) +  \sum_{k=1}^{K-1} k(k+1) \varepsilon^{(k)}_{\rm step} \nonumber \\
\leq\; &\Phi^{h_{0}}({0}) + (K-1)K g(x_{K}, \ty) - \sum_{k=1}^{K-1} 2k \, g(x_{k}, \ty) + \sum_{k=1}^{K-1} k(k+1) \varepsilon^{(k)}_{\rm step} \label{eq:mpscc-pot-func-telescope}
\end{align}
Where $(a)$ follows from Lemma \ref{lem:mp-step} and $g(x_{k}, y_k) - g(x_{k+1}, y_k) \leq g(x_{k}, y_k) - \min_x g(x, y_k) \leq \varepsilon^{(k)}_{\rm step}$, $(b)$ is obtained summing \eqref{eq:mpscc-pot-func-recurrence} over $k = \{ 0, \ldots, K-1 \}$. Rearranging the terms of \eqref{eq:mpscc-pot-func-telescope} we get,
\begin{align}
\Phi^{h_{0}}({0}) + \sum_{k=1}^{K-1} k(k+1) \varepsilon^{(k)}_{\rm step} &\geq \sum_{k=1}^{K-1} 2k \, g(x_{k}, \ty) + \Phi^{h_{K}}({K}) -(K-1)K g(x_{K}, \ty) \nonumber \\
&\geq  \sum_{k=1}^{K-1} 2k \, g(x_{k}, \ty) + K(K+1) (g(x_{K}, \ty) - g(x_{K}, y_K)) +\nonumber \\
&\;\;\;\;\; 2\beta\|z_K - \ty \|^2  - (K-1)K g(x_{K}, \ty) \nonumber \\
&\geq  \sum_{k=1}^{K} 2k\, g(x_{K}, \ty) - K(K+1) g(x_{K}, y_K) \nonumber  \\
&\overset{(a)}\geq  K(K+1) [ g(\bar{x}_{K}, \ty) - g(x_{K}, y_K)]  \nonumber  \\
&\overset{(b)}\geq  K(K+1) [ g(\bar{x}_{K}, \ty) - g(\tx, y_K) - {\varepsilon^{(K)}_{\rm step}}]
\end{align}
where $(a)$ uses the $\bar{x}_{K} = \frac1{K(K+1)} \sum_{k=1}^{K} (2i)\, x_i$ and convexity of $g(\cdot, \ty)$, and $(b)$ uses Lemma $6$. Thus we get that,
\begin{align}
%\max_{y \in \cY} g(\bar{x}_{K}, y) - g(\tx, y_K) &\leq  
g(\bar{x}_{K}, \ty) - g(\tx, y_K)  
%\nonumber  \\
&\leq \frac{\Phi^{h_{0}}({0})}{K(K+1)} + \sum_{k=1}^{K} \frac{k(k+1)}{K(K+1)} \varepsilon^{(k)}_{\rm step}  \nonumber  \\
&= \frac{2\beta \|y_0 - \ty \|^2}{K(K+1)} + \sum_{k=1}^{K} \frac{k(k+1)}{K(K+1)} \varepsilon^{(k)}_{\rm step}
\end{align}
Finally we get the desired general statement by taking minimum and maximum over $\tx$ and $\ty$ respectively.
%By selecting $\varepsilon^{(k)}_{\rm step} = \frac1{k^2(k+1)}$ we get,
%\begin{align}
%\max_{\ty \in \cY} g(\bar{x}_{K}, \ty) -  \min_{\tx \in \cX} g(\tx, y_K)
%&\leq \frac{4\frac{L^2}{\sigma} D_{\cY}^2 + \ln K + 1}{K(K+1)}
%\end{align}
By selecting $\varepsilon^{(k)}_{\rm step} = \frac{L^2 D^2_{\cY}}{\sigma k^3(k+1)}$ we get,
\begin{align}
\max_{\ty \in \cY} g(\bar{x}_{K}, \ty) -  \min_{\tx \in \cX} g(\tx, y_K)
&\leq \frac{6\frac{L^2}{\sigma} D_{\cY}^2}{K(K+1)}
\end{align}
Further, using Lemma~\ref{lem:mp-step} and $\varepsilon^{(k)}_{\rm step} = \frac{L^2 D^2_{\cY}}{\sigma k^3(k+1)}$, we get that the total number of gradient computations at iteration $k$ is at most $O\big(\sqrt\frac{L}{\sigma} \log^2 (k)\big)$:
\begin{align}
%\max_{r \in [R]} 
\ceil[\bigg]{\log_2 5 k^2 \sqrt{\frac{L}\sigma}} \, 
%\ceil{ \sqrt{\frac{L}\sigma}\log \frac{k^4 \sigma (L + \sigma) \|x_0 - x^*(y_r) \|^2}{L^2 D^2_{\cY}}}
O\Big(\sqrt{\frac{L}\sigma} \log \big(k^4 \big)\Big)
\end{align}

Note that in updating $y_{k+1}$ in Eq.~\eqref{eq:update_y} and $x_{k+1}$ in \mpstep{} sub-routine, 
we were applying the principle of conceptual Mirror-Prox, where the update needs to satisfy some fixed point equation. 
This is critical in proving the above fast convergence rate. 
%A  standard AGD update is $y_{k+1} = w_k + (1/\beta) \nabla_y h_{x_k}(w_k)$ with $x_k$ satisfying $g(x_{k},y_{k-1}) \leq \min_x g(x,y_{k) + \varepsilon_{\rm step}$. 

% --------------------------------------------------------------------------------------------------------------------------------------------------------------------------------
\subsubsection{Proof of Lemma~\ref{lem:mp-step}} \label{sec:mp-step_proof}
For brevity, we define the following operations,
%$x^*(y) = \underset{x \in \cX}{\arg\min} \; g(x, y)$,  $\hat{x}(y) = \underset{x \in \cX}{\arg\min} \; \max\{g(x, y) -  g(x^*(y), y), \varepsilon\}$, $y^+ = w + \nabla_y g(x^*(y), w)/\beta$, and $\hat{y}^+ = w + \nabla_y g(\hat{x}(y), w)/\beta$. 
\begin{align}
x^*(y) &= \underset{x \in \cX}{\arg\min} \; g(x, y) \label{eq:op-x-minimizer} \\
%\hat{x}(y) &\in \underset{x \in \cX}{\arg\min} \; \max\{g(x, y) -  g(x^*(y), y), \varepsilon\} \\
y^+ &= \PY{w + \frac1\beta \nabla_y g(x^*(y), w)} \label{eq:op-y-ascent} 
%\\ \hat{y}^+ &= w + \frac1\beta \nabla_y g(\hat{x}(y), w)\, .
\end{align}
$x^*(y)$ is unique since $g(\cdot, y)$ is strongly convex. We first prove that, $x^*(y)$ is ${\frac{L}\sigma}$-Lipschitz continuous as follows.
%We use optimality of $x^*(y_1)$ \eqref{eq:op-x-minimizer}, $\sigma$-strong convexity of $g(\cdot, y)$ and concavity of $g(x, \cdot)$ to get,
%\begin{align}
%\frac{\sigma}2 \| x^*(y_1) -x^*(y_2) \|^2 + g(x^*(y_1), y_1) &\leq g(x^*(y_2), y_1) \nonumber \\
%&\leq g(x^*(y_2), y_2) + \Ip{\nabla_y g(x^*(y_2), y_2)}{y_1 - y_2}  \label{eq:mp-lipschitz-1}\,.
%\end{align}
%Now by interchanging the indices $1, 2$ of \eqref{eq:mp-lipschitz-1} and adding it to \eqref{eq:mp-lipschitz-1} we get,
%\begin{align}
%{\sigma} \| x^*(y_1) -x^*(y_2) \|^2  &\leq -\Ip{\nabla_y g(x^*(y_1), y_1) - \nabla_y g(x^*(y_2), y_2)}{y_1 - y_2} \nonumber \\
%&= -\Ip{\nabla_y h(y_1) - \nabla_y h(y_2)}{y_1 - y_2} + \nonumber \\
%&\leq {G} \| y_1 - y_2 \|^2 \label{eq:mp-lipschitz-2}
%\end{align}
%by using the fact that $h(y) =  \max_x g(x, y)$ is concave. 
\begin{align}
\sigma \|x^*(y_2) - x^*(y_1) \|^2 &\overset{(a)}\leq 
\Ip{\nabla_x g(x^*(y_2), y_2) - \nabla_x g(x^*(y_1), y_2)}{x^*(y_2) - x^*(y_1)} \nonumber \\
&\overset{(b)}\leq \Ip{ - \nabla_x g(x^*(y_1), y_2)}{x^*(y_2) -x^*( y_1) } \nonumber \\
&\overset{(c)}\leq \Ip{\nabla_x g(x^*(y_1), y_1) - \nabla_x g(x^*(y_1), y_2)}{x^*(y_2) - x^*(y_1)} \nonumber \\
&\overset{(d)}\leq  L \| y_1 - y_2 \| \| x^*(y_2) - x^*(y_1) \| \label{eq:mp-lipschitz-2}
\end{align}
where $(a)$ uses $\sigma$-strong convexity of $g(\cdot, y)$, $(b)$ and $(c)$ use the necessary first order optimality conditions for $x^*(y_1)$ and $x^*(y_2)$: $\Ip{\nabla_x g(x^*(y), y)}{x - x^*(y)} \geq 0$, and $(d)$ uses Cauchy-Schwarz inequality and $L$-smoothness of $g$ (Definition \ref{def:smooth}). Next we prove that the operation $(\cdot)^+$ is a contraction as follows,
\begin{align}
\| y_1^+ - y_2^+ \| &= \| \PY{w + \frac1\beta \nabla_y g(x^*(y_1), w)}  - \PY{w + \frac1\beta \nabla_y g(x^*(y_2), w)}  \| \nonumber \\
&\overset{(a)}\leq \frac1\beta  \| \nabla_y g(x^*(y_1), w)  - \nabla_y g(x^*(y_2), w)  \| \nonumber \\
&\overset{(b)}\leq \frac{L}{\beta} \| x^*(y_1) -x^*(y_2) \| \nonumber \\
&\overset{(c)}\leq \frac{L}{\beta} {\frac{L}{\sigma}} \| y_1 - y_2 \| \overset{(d)}\leq 2^{-1} \| y_1 - y_2 \| \label{eq:mp-contract-1}
\end{align}
where $(a)$ uses Pythagorean theorem and \eqref{eq:op-y-ascent}, $(b)$ uses $L$-smoothness of $g$, $(c)$ uses \eqref{eq:mp-lipschitz-2}, and $(d)$ uses $\beta = 2{\frac{L^2}{\sigma}}$. Therefore as $(\cdot)^+$ is a contraction by Banach's fixed point theorem, it has a unique fixed point $\ty$: $(\ty)^+ = \ty$, as $\cY$ is a compact (and hence complete) metric space. Now we will prove that the output of \mpstep{}, $(\hat{x}_{R}, y_{R+1})$ satisfies \eqref{eq:mp-step-conditions}. Notice that if $\varepsilon_{\rm agd}$ is small then $\hat{x}_r$ is close to $x^*(y_r)$:
\begin{align}
\frac\sigma2 \| \hat{x}_{r} - {x}^*(y_{r}) \|^2 \overset{(a)}\leq g(\hat{x}_r, y_r) -  \min_x g(x, y_r) 
%\overset{(b)}\leq \varepsilon_{\rm agd} 
\overset{(b)}\implies  
\| \hat{x}_{r} - {x}^*(y_{r}) \| \leq \sqrt{\frac{2\varepsilon_{\rm agd}}{\sigma}} = \frac{\beta \varepsilon_{\rm mp}}{4L} \label{eq:mp-step-approx-minimzer}
\end{align}
where $(a)$ uses $\sigma$-strong convexity and optimality of $x^*(y_r)$, and $(b)$ uses \eqref{eq:mp-step-agd-error}, and $(c)$ uses $\varepsilon_{\rm agd} = {\sigma \beta^2 \varepsilon_{\rm mp}}/({32 L^2})$. Next we see that $\| y_r - {\ty} \| $ decreases to $\varepsilon$ exponentially fast.
\begin{align}
\| y_r - {\ty} \| &\overset{(a)}= \| \PY{w + \frac1\beta \nabla_y g(\hat{x}_{r-1}, w)} - {(\ty)}^+\| \nonumber \\
&\overset{(b)}\leq \| y_{r-1}^+ - {(\ty)}^+ \| + \| \PY{w + \frac1\beta\nabla_y g({x}^*(y_{r-1}), w)} - \PY{w + \frac1\beta \nabla_y g(\hat{x}_{r-1}, w)} \|  \nonumber \\
&\overset{(c)}\leq 2^{-1} \| y_{r-1} - {\ty} \| + \frac{L}\beta \| {x}^*(y_{r-1})  - \hat{x}_{r-1}\| \nonumber \\
%&\overset{(d)}\leq 2^{-1} \| y_{r-1} - {\ty} \| + \frac{L}\beta  \sqrt{\frac{2 \varepsilon_{\rm agd}}{\sigma}} \nonumber \\
&\overset{(d)}\leq 2^{-1} \| y_{r-1} - {\ty} \| +  \frac{\varepsilon_{\rm mp}}4 \label{eq:mp-step-reccurence} \\
&\overset{(e)}\leq 2^{-r} \| y_{0} - {\ty} \| + \frac{\varepsilon_{\rm mp}}2 \label{eq:mp-step-error-1} 
\end{align}
%where $(a)$ uses \eqref{eq:mp-step-agd-error} and the fact that $\ty=(\ty)^+$ is a fixed point, % \eqref{eq:mp-step-y-step}, 
where $(a)$ uses $y_{r+1} = \mathcal{P}_{\cY} \big(w + \frac1\beta \nabla_y g(\hat{x}_{r}, w)\big)$ and the fact that $\ty=(\ty)^+$ is a fixed point, 
$(b)$ uses triangular inequality and \eqref{eq:op-y-ascent}, $(c)$ uses \eqref{eq:mp-contract-1}, Pythagorean theorem and $L$-smoothness of $g$ (Definition \ref{def:smooth}), $(d)$ uses \eqref{eq:mp-step-approx-minimzer}, and $(e)$ just unrolls the recurrence relation in \eqref{eq:mp-step-reccurence} . 
Next, we prove that the minimizer at $y_{R+1}$, $x^*(y_{R+1})$ is not far from $\hat{x}_R$.
\begin{align}
\| {x}^*(y_{R+1}) - \hat{x}_R\|  &\overset{(a)}\leq \| {x}^*(y_{R+1}) - x^*(\ty) \| + \| x^*(\ty) - x^*(y_{R})\| + \| {x}^*(y_{R}) - \hat{x}_{R} \| \nonumber  \\
&\overset{(b)}\leq \frac{L}\sigma ( \| y_{R+1} - \ty \| + \| y_{R} - \ty \| ) +  \frac{\beta {\varepsilon_{\rm mp}}}{4L} \nonumber  \\
&\overset{(c)}\leq \frac{L}\sigma ( {\varepsilon_{\rm mp}} + {\varepsilon_{\rm mp}}) +  \frac{L {\varepsilon_{\rm mp}}}{2\sigma} = \frac{5L {\varepsilon_{\rm mp}}}{2\sigma} \label{eq:mp-approx-min-close-prev-min}
\end{align}
where $(a)$ uses triangle inequality, and $(b)$ uses  \eqref{eq:mp-lipschitz-2} and \ref{eq:mp-step-approx-minimzer}, and $(c)$ uses \eqref{eq:mp-step-error-1} and the fact that $R = \ceil{\log_2 \frac{2D_\cY}{\varepsilon_{\rm mp}}}$. Finally, we prove that $(x_R, y_{R+1})$ satisfies \eqref{eq:mp-step-conditions}.
\begin{align} 
g(\hat{x}_{R}, y_{R+1}) &\overset{(a)}\leq g(x^*(y_{R+1}),y_{R+1})+ \Ip{\nabla_x g(x^*(y_{R+1}), y_{R+1})}{\hat{x}_R - x^*(y_{R+1}),}  + \frac{L}2 \| {x}^*(y_{R+1}) - \hat{x}_R \|^2 \nonumber \\
&\overset{(b)}\leq \min_x g(x, y_{R+1})+ 0 +\frac{25 L^3 {\varepsilon^2_{\rm mp}}}{8 \sigma^2} 
\overset{(c)}= \min_x g(x, y_{R+1})+ \varepsilon_{\rm step}
\end{align}
\begin{sloppy}
where $(a)$ uses $L$-smoothness of $g(\cdot, y)$, $(b)$ uses necessary first order optimality condition: ${\Ip{\nabla_x g(x^*(y), y)}{x - x^*(y)}= 0}$ and \eqref{eq:mp-approx-min-close-prev-min}, and $(c)$ uses $\varepsilon_{\rm mp} = \frac{2 \sigma}{5 L}\sqrt{\frac{2 \varepsilon_{\rm step}}{L}}$.
\end{sloppy} 

Let the number of gradient computations done per iteration of \mpstep{} (a run of accelerated gradient ascent) be $T_r$ and $\kappa =\sqrt{L/\sigma}$. Then, from guarantee on AGD (\cite[Eqn. (5.68)]{bansal2017potential}), we get that,
\begin{align}
g(\hat{x}_r, y_r) - g(x^*(y_r), y_r) &\leq \Big(1 + \frac1{\sqrt{\kappa} - 1}\Big)^{-T_r}  {\Big(g(x_0, y_r) - g(x^*(y_r), y_r) + \frac{\sigma}2 \|x_0 - x^*(y_r)\|^2\Big)} \nonumber \\
&\leq e^{-T_r/\sqrt{\kappa}} \; 2\,(g(x_0, y_r) - g(x^*(y_r), y_r)) \nonumber \\
&\leq e^{-T_r/\sqrt{\kappa}} \; 2\,(f(x_0) - h( y_r)) \nonumber \\ %\label{eq:agd-steps-ub}
&\leq e^{-T_r/\sqrt{\kappa}} \; 2\,(f(x_0) - \min_{y' \in D_{\cY}} h(y')) \label{eq:agd-steps-ub}\,,
\end{align}
%Now notice that by compactness of $\cY$ and $L(1 + L/\sigma)$-smoothness of $h$ (Lemma \ref{lem:outer-max-smooth}),
%\begin{align}
% h(y_r) &\geq h^*+ \Ip{\nabla h(y^*)}{y_r - y^*} - \frac1{2} {L(1 + \frac{L}\sigma)} \|y_r - y^* \|^2 \nonumber \\
%&\geq f^*- \|{\nabla h(y^*)}\| \|{y_r - y^*}\| - \frac1{2} {L(1 + \frac{L}\sigma)} D_{\cY}^2 \nonumber \\
%&\geq f^*- \max_{y' \in D_{\cY}} \|{\nabla h(y')}\| D_{\cY} - \frac1{2} {L(1 + \frac{L}\sigma)} D_{\cY}^2  \label{eq:h-smooth-lb}
%\end{align}
where $\min_{y' \in D_{\cY}} h(y')$ is well-defined since $\cY$ is compact and $h$ is smooth (Lemma \ref{lem:outer-max-smooth}). This means that if we want $g(\hat{x}_r, y_r) - g(x^*(y_r), y_r)  \leq \varepsilon_{\rm agd}$, then required number of steps $T_r$ is at most,
\begin{align}
\ceil[\bigg]{\sqrt{\frac{L}\sigma}\log \frac{2(f(x_0) - \min_{y' \in D_{\cY}} h(y'))}{\varepsilon_{\rm agd}}} &= \ceil[\bigg]{\sqrt{\frac{L}\sigma}\log \frac{50 L (f(x_0) - \min_{y' \in D_{\cY}} h(y'))}{\sigma \varepsilon_{\rm step}}} \nonumber \\ &=O\Big(\sqrt{\frac{L}\sigma} \log \Big(\frac1{\varepsilon_{\rm step}}\Big)\Big)
\end{align}

\subsection{Proof of Lemma \ref{lem:outer-max-smooth}}
\label{sec:outer-max-smooth_proof}
We know that $h(y) = \min_{x \in \cX} g(x, y)$, where $g(\cdot, y)$ is $\sigma$-strongly convex, $g(x, \cdot)$ is concave,  $g$ is $L$-smooth (Definition \ref{def:smooth}). Since $g(\cdot, y)$ is strongly convex, the minimizer $x^*(y) = \arg\min_{x \in \cX} g(x, y)$ unique. Then by Danskin's theorem \cite[Section 6.11]{bertsekas2009convex}, $h$ is differentiable and $\nabla h(y) = \nabla_y g(x^*(y), y)$. Then $h$ can be show to be smooth as follows,
\begin{align}
\|\nabla h(y_1) - \nabla h(y_1) \| &= \| \nabla_y g(x^*(y_1), y_1) - \nabla_y g(x^*(y_2), y_2) \| \nonumber \\
&\leq \| \nabla_y g(x^*(y_1), y_1) - \nabla_y g(x^*(y_1), y_2) \| + \| \nabla_y g(x^*(y_1), y_2) - \nabla_y g(x^*(y_2), y_2) \| \nonumber \\
&\overset{(a)}\leq L \| y_1 - y_2 \| + L \| x^*(y_1) - x^*(y_2) \| \nonumber \\
&\overset{(b)}\leq L \| y_1 - y_2 \| + L \frac{L}\sigma \|y_1 - y_2 \| = L \big(1 + \frac{L}\sigma) \|y_1 - y_2 \| 
\end{align}
where $(a)$ uses $L$-smoothness of $g$ and $(b)$ uses \eqref{eq:mp-lipschitz-2}.

\if0
\subsection{Proof of Theorem~\ref{thm:minimax_noncvx_cve_gd}}
\label{sec:minimax_noncvx_cve_gd_proof}
Let,
\begin{align}\label{eq:quad-approx-smooth}
f(x; x_k) = \max_{y \in \cY} [g(x,y; x_k) = g(x, y) + L \| x - x_k \|^2 ]\,.
\end{align}
Then by Lemma \ref{lem:weak-plus-strong-cvx} and $L$-weak convexity of $g(\cdot, y)$ and $2L$-strong convexity of $L\|x-x_k\|^2$, we get  that $g(\cdot, y; x_k)$ is $L$-strongly-convex.
Similarly, $f(\cdot; x_k)$ can be shown to be $L$-strongly convex too, since $f(x, x_k) = [\max_y g(x,y)] + L\|x- x_k \|^2$ and $\max_u g(x, u)$ is $L$-weakly convex by Lemma \ref{lem:weakly-cvx-smooth}.

We divide the analysis of each iteration of our algorithm into two cases. 

\bigskip
\noindent
{\bf Case 1: 
	%{\bf If $\mathbf{f(x_{k+1}; x_k) {\boldsymbol\leq}  f(x_k) - 3\boldsymbol{\tepsilon}/4}$},  
	${f(x_{k+1}; x_k) {\leq}  f(x_k) - 3{\tepsilon}/4}$.}  
At iteration $k$ the objective value decreases by at least $3{\tepsilon}/4$. 
One cannot have more than 
$\ceil[\Big]{\frac{4(f(x_0) - f^*)}{3\tepsilon}}$ Case 1 steps, before termination. 

\bigskip
\noindent 
{\bf Case 2: 
	%{\bf If $\mathbf{f(x_{k+1}; x_k) \boldsymbol>  f(x_k) - 3\boldsymbol{\tepsilon}/4}$}
	${f(x_{k+1}; x_k) {>}  f(x_k) - 3{\tepsilon}/4}$:} 
We show that $x_k$ is an $\varepsilon$-FOSP as follows. 
\begin{align}
f(x_k) - \frac{3\tepsilon}4 < f(x_{k+1}; x_k) \leq \min_x f(x; x_k)  + \frac{\tepsilon}4  
\;\;\;\; \implies   \;\;\;\; f(x_k)  < \min_x f(x; x_k)  + {\tepsilon}
\label{eq:less-decrease-smooth}
\end{align}
Define $x_k^*$ as the point satisfying  $x^*_k = \arg\min_x f(x; x_k)$. 
By $L$-strong convexity of $f(\cdot; x_k)$ \eqref{eq:quad-approx-smooth}, we prove that $x_k$ is close to $x_k^*$:
\begin{align}
&f(x^*_k;x_k) + \frac{L}2 \|x_k-x^*_k\|^2 \; \leq \; f(x_{k};x_k) \; =\;  f(x_k) \; \overset{(a)}<  \;f(x^*_{k};x_k) + \tepsilon \nonumber \\
\implies \;\; &\|x_k-x^*_k\| < \sqrt{\frac{2\tepsilon}{L}}  \label{eq:moreau-ball-ub-smooth}
\end{align}
where $(a)$ uses \eqref{eq:less-decrease-smooth}. Now consider any $\tx \in \cX$, such that $4\sqrt{\tepsilon/L} \leq \| \tx - x_k\|$. Then,
\begin{align}
f(\tx) + L \|\tx - x_k\|^2 &= \max_{y \in \cY} g(\tx, y) + L \|\tx - x_k\|^2 \nonumber  \\
&\overset{}= f(\tx; x_k) \nonumber \\
&\overset{(a)}= f(x_k^*; x_k) + \frac{L}2 \|\tx - x^*_k\|^2 \nonumber \\
&\overset{(b)}\geq f(x_k) - \tepsilon + \frac{L}2 (\|\tx - x_k\| - \|x_k - x^*_k\|)^2 \nonumber \\
&\overset{(c)}\geq f(x_k) - \tepsilon + 2 \tepsilon 
%\nonumber \\ &
\overset{}= f(x_k) + \tepsilon \label{eq:moreau-ball-lb-smooth}
\end{align}
where $(a)$ uses uses $L$-strong convexity of $f(\cdot; x_k)$ at its minimizer $x^*_k$, $(b)$ uses \eqref{eq:less-decrease-smooth}, and $(b)$ and $(c)$ use triangle inequality, \eqref{eq:moreau-ball-ub-smooth} and $4\sqrt{\tepsilon/L} \leq \| \tx - x_k\|$.

Now consider the Moreau envelope, $f_{\frac1{2L}}(x) = \min_{x' \in X} \phi_{\frac1{2L}, x}(x')$ where $\phi_{\lambda,x}(x') = f(x') + L \|x - x'\|^2$. Then, we can see that $\phi_{\frac1{2L}, x_k}(x')$ achieves its minimum in the ball $\{x' \in \cX \,|\, \|x' - x_k\| \leq 4\sqrt{\tepsilon/L} \}$ by \eqref{eq:moreau-ball-lb-smooth} and Lemma \ref{lem:moreau-properties}(a). Then, with Lemma \ref{lem:moreau-properties}(b,c), we get that,
\begin{align}
\| \nabla f_{\frac1{2L}}(x_k) \| \leq (2L) \| x_k - \hat{x}_{\frac1{2L}}(x_k) \| = 8 \sqrt{L \tepsilon} = \varepsilon
\end{align}
Now, for each iteration, we use use the \mpscc{} (Algorithm \ref{algo:minimax_stronglycvx_cve_agd}) algorithm for $L$-strongly-convex concave $2L$-smooth minimax problem to solve the inner optimization problem. If for each iteration of inner problem, \mpscc{} algorithm takes $K$ steps then, by Theorem \ref{thm:minimax_stronglycvx_cve_agd},
\begin{align}
\frac{6(2L)^2 D^2_{\cY}}{L K^2} \leq \frac{\tepsilon}{4} = \frac{\varepsilon^2}{2^8 L} \;\implies\; O\bigg(\frac{L D_{\cY}}{\varepsilon}\bigg) \leq K
\end{align}
Therefore the number of gradient computations required for each iteration of inner problem is $ O\bigg(\frac{L D_{\cY}}{\epsilon} \log^2 \bigg(\frac{1}{\varepsilon} \bigg) \bigg)$ (Theorem \ref{thm:minimax_stronglycvx_cve_agd}). 

Putting these together we see that the total number of gradient computation to reach $\varepsilon$-FOSP is,
\begin{align}
\ceil[\Bigg]{\frac{4^4 L (f(x_0) - f^*)}{3\varepsilon^2}} O\bigg(\frac{L D_{\cY}}{\epsilon} \log^2 \bigg(\frac{1}{\varepsilon} \bigg) \bigg)\,.
\end{align}
\fi
% ========================================================================
%!TEX root = ./smooth_minimax.tex
\subsection{Proof of Corollary~\ref{thm:finite_minimax_gd}}
\label{sec:proof_finite_minimax_gd}

\begin{algorithm}[]
	\DontPrintSemicolon % Some LaTeX compilers require you to use \dontprintsemicolon instead
	\KwIn{functional components $\{f_i\}_{i=1}^m$, Lipschitzness $G$, smoothness $L$, domain $\cX$, target accuracy $\varepsilon$, initial point $x_0$}
	\KwOut{$x_k$}
	$\tepsilon \leftarrow \frac{\varepsilon^2}{64\,L}$\;
	\For{$k = 0, 1, \ldots$} {
		%$x_{k+1} \gets \min_x \max_i f_i (x_k) + \Ip{\nabla f_i(x_k)}{x - x_k} + L/2 \| x - x_k \|^2$ \Comment*[r]{excessive gap \cite{nesterov2005excessive}}  
		Using excessive gap technique \cite[Problem (7.11)]{nesterov2005excessive} for strongly convex components, 
		find $x_{k+1} \in \cX$ such that,
		\begin{align}
		\label{eq:finite_inner}
		& \wf(x_{k+1};x_k)  \;\; \leq \;\; \min_x \wf(x;x_k) + \tepsilon/4
		%\max_i f_i (x_k) + \Ip{\nabla f_i(x_k)}{x_{k+1} - x_k} + G/2 \| x_{k+1} - x_k \|^2 \nonumber \\
		%		\leq &\min_{x \in \cX} \max_i f_i (x_k) + \Ip{\nabla f_i(x_k)}{x - x_k} + G/2 \| x - x_k \|^2 + \tepsilon/4
		\end{align}
		\If{$f(x_k) - 3\tepsilon/4 < \wf(x_{k+1};x_k)$} {%\max_i f_i (x_k) + \Ip{\nabla f_i(x_k)}{x_{k+1}- x_k} + G/2 \| x_{k+1} - x_k \|^2$}{
			\KwRet{$x_{k}$}
			%			\KwRet{$\hat{x} \in {\arg\min}_{x \in \cX} f(x) + G \|x -x_k\|^2$}\\
			%			Using Nesterov's scheme for minimization of smooth-strongly-convex minima problem \cite[Scheme 2.3.13]{nesterov1998introductory},  find, $\hat{x} \in \cX$ such that,
			%			\begin{align}
			%			f(\hat{x}) + G \|\hat{x} -x_k\|^2 \leq {\min}_{x \in \cX} f(x) + G \|x -x_k\|^2 + \tilde{\tepsilon}
			%			\end{align}
		} 
	}
	\caption{Proximal Finite Dual Implicit Accelerated Gradient (\lqafncc) for finite nonconvex concave minimax optimization}
	\label{algo:minimax_gd}
\end{algorithm}

Let 
\begin{align}
\wf(x; x_k) \;\; = \;\; \max_{1 \leq i \leq m} \; f_i (x_k) + \Ip{\nabla f_i(x_k)}{x - x_k} + \frac{L}{2} \| x - x_k \|^2 
\label{eq:quad-approx-finite}
\end{align}
be a quadratic approximation of the finite max-type function $f(x)$ at $x_k$. Then, $\wf(\cdot; x_k)$ is $L$-strongly convex, since it is a maximum of convex functions and the quadratic term in \eqref{eq:quad-approx-finite} is independent of $i$.

Proof is similar to that of Theorem \ref{thm:minimax_noncvx_cve_gd}. We divide the analysis of each iteration of our algorithm into two cases. 

\bigskip
\noindent
{\bf Case 1: 
	%{\bf If $\mathbf{f(x_{k+1}; x_k) {\boldsymbol\leq}  f(x_k) - 3\boldsymbol{\tepsilon}/4}$},  
	${\wf(x_{k+1}; x_k) {\leq}  f(x_k) - 3{\tepsilon}/4}$.}  
At iteration $k$ the objective value decreases by at least $3{\tepsilon}/4$. 
One cannot have more than 
$\ceil[\Big]{\frac{4(f(x_0) - f^*)}{3\tepsilon}}$ Case 1 steps, before termination. 

\bigskip
\noindent 
{\bf Case 2: 
	%{\bf If $\mathbf{f(x_{k+1}; x_k) \boldsymbol>  f(x_k) - 3\boldsymbol{\tepsilon}/4}$}
	${\wf(x_{k+1}; x_k) {>}  f(x_k) - 3{\tepsilon}/4}$:} 
We show that $x_k$ is an $\varepsilon$-FOSP as follows. 
\begin{align}
f(x_k) - \frac{3\tepsilon}4 < \wf(x_{k+1}; x_k) \leq \min_x \wf(x; x_k)  + \frac{\tepsilon}4  
\;\;\;\; \implies   \;\;\;\; f(x_k)  < \min_x \wf(x; x_k)  + {\tepsilon}
\label{eq:less-decrease-finite}
\end{align}
Define $x_k^*$ as the point satisfying  $x^*_k = \arg\min_x \wf(x; x_k)$. 
By $L$-strong convexity of $\wf(\cdot, x_k)$ \eqref{eq:quad-approx-finite}, we prove that $x_k$ is close to $x_k^*$:
\begin{align}
&\wf(x^*_k;x_k) + \frac{L}2 \|x_k-x^*_k\|^2 \; \leq \; \wf(x_{k};x_k) \; =\;  f(x_k) \; \overset{(a)}<  \;\wf(x^*_{k};x_k) + \tepsilon \nonumber \\
\implies \;\; &\|x_k-x^*_k\| < \sqrt{\frac{2\tepsilon}{L}}  \label{eq:moreau-ball-ub-finite}
\end{align}
where $(a)$ uses \eqref{eq:less-decrease-finite}. Now consider any $\tx \in \cX$, such that $4\sqrt{\tepsilon/L} \leq \| \tx - x_k\|$. Then,
\begin{align}
f(\tx) + L \|\tx - x_k\|^2 &= \max_i f_i (\tx) +  L \|\tx - x_k\|^2 \nonumber \\
&\overset{(a)}\geq \max_i f_i (\tx) +  \Ip{\nabla f_i(x_k)}{\tx - x_k} + \frac{L}2 \|\tx - x_k\|^2 \nonumber \\
&\overset{(b)}= \wf(\tx; x_k) \nonumber \\
&\overset{(c)}\geq \wf(x_k^*; x_k) + \frac{L}2 \|\tx - x^*_k\|^2 \nonumber \\
&\overset{(d)}\geq f(x_k) - \tepsilon + \frac{L}2 (\|\tx - x_k\| - \|x_k - x^*_k\|)^2 \nonumber \\
&\overset{(e)}\geq f(x_k) - \tepsilon + 2 \tepsilon 
%\nonumber \\ &
\;\; \overset{}=  \;\; f(x_k) + \tepsilon \label{eq:moreau-ball-lb-finite}
\end{align}
where $(a)$ uses weak convexity of $f_i$, $(b)$ uses \eqref{eq:quad-approx-finite}, $(c)$ uses $L$-strong convexity of $\wf(\cdot; x_k)$ at its minimizer $x^*_k$, $(d)$ uses \eqref{eq:less-decrease-finite}, and $(b)$ and $(e)$ use triangle inequality, \eqref{eq:moreau-ball-ub-finite} and $4\sqrt{\tepsilon/L} \leq \| \tx - x_k\|$.

Now consider the Moreau envelope, $f_{\frac1{2L}}(x) = \min_{x' \in X} \phi_{\frac1{2L}, x}(x')$ where $\phi_{\lambda,x}(x') = f(x') + L \|x - x'\|^2$. Then, we can see that $\phi_{\frac1{2L}, x_k}(x')$ achieves its minimum in the ball $\{x' \in \cX \,|\, \|x' - x_k\| \leq 4\sqrt{\tepsilon/L} \}$ by \eqref{eq:moreau-ball-lb-finite} and Lemma \ref{lem:moreau-properties}(a). Thus, with Lemma \ref{lem:moreau-properties}(b,c), we get that,
\begin{align}
\| \nabla f_{\frac1{2L}}(x_k) \| \leq (2L) \| x_k - \hat{x}_{1/2L}(x_k) \| = 8 \sqrt{L \tepsilon} = \varepsilon
\end{align}

% Excessive gap technique removed
Now we use the excessive gap technique for non-smooth strongly convex functions with max-structure to solve the inner optimization problem in $4 G (m \log m) \sqrt{\frac{ \log m}{\tepsilon L}}$ computations \cite[Problem (7.11)]{nesterov2005excessive}.

Putting these together we see that the total number of inner steps to reach $\varepsilon$-FOSP is,
\begin{align}
\ceil[\bigg]{\frac{4 (f(x_0) - f^*)}{3 \tepsilon}} \ceil[\bigg]{2G (m \log m) \sqrt{\frac{ \log m}{L \tepsilon}}} = \ceil[\bigg]{\frac{4^4 L (f(x_0) - f^*)}{3 \varepsilon^2}} \ceil[\bigg]{\frac{2^5 G}{\varepsilon} (m \log^{3/2} m) {{}{}}}
\end{align}

\if0
{\color{cyan}
% Using strongly convex concave algorithm instead
Now for solving the inner optimization problem, using Algorithm~\ref{algo:minimax_stronglycvx_cve_agd}, we solve the following equivalent $L$-smooth $L$-strongly-convex($x$)--concave($y$) min-max problem,
\begin{align}
\min_{x \in \cX} \bigg[ \max_{y \in \simplex_m} \bigg[ g(x, y) = \sum_{i \in [m]} y_i f_i(x, x_k) =  \sum_{i \in [m]} y_i \; \Big[ f_i (x_k) + \Ip{\nabla f_i(x_k)}{x - x_k} + \frac{L}{2} \| x - x_k \|^2 \Big] \bigg] \bigg]
\end{align}
such that $\simplex_m = \{y \in [0, 1]^m \,|\, \sum_i y_i = 1 \}$ up to a primal-dual gap of $\frac\tepsilon{4}$, which implies the same amount of optimality gap in $f(\cdot, x_k)$. Using Theorem \ref{thm:minimax_stronglycvx_cve_agd}, we see that this requires $O(\frac{L}\varepsilon \log^2 (\frac{1}\varepsilon))$ computations per inner optimization step of Algorithm \ref{algo:minimax_gd}.

the inner optimization problem in $4 G (m \log m) \sqrt{\frac{ \log m}{\tepsilon L}}$ computations \cite[Problem (7.11)]{nesterov2005excessive}.

Putting these together we see that the total number of inner steps to reach $\varepsilon$-FOSP is,
\begin{align}
\ceil[\bigg]{\frac{4 (f(x_0) - f^*)}{3 \tepsilon}} \ceil[\bigg]{2G (m \log m) \sqrt{\frac{ \log m}{L \tepsilon}}} = \ceil[\bigg]{\frac{4^4 L (f(x_0) - f^*)}{3 \varepsilon^2}} \ceil[\bigg]{\frac{2^5 G}{\varepsilon} (m \log^{3/2} m) {{}{}}}
\end{align}
}
\fi

%{\color{blue}
\subsection{\alqafncc algorithm}
\label{sec:adaptive_finite_minimax_gd}
In this section, we provide the \alqafncc (Algorithm~\ref{algo:minimax_gd_adapt}) to find an $\varepsilon$-FOSP of the finite max-type nonconvex minimax problem \ref{prob:finite-minimax} with $L$-smooth components. \alqafncc is a variation of the \lqafncc (Algorithm \ref{algo:minimax_gd}). \alqafncc uses \lqafncc as a sub-routine and successively finds $\varepsilon'$-FOSPs, for geometrically decreasing values of $\varepsilon'$ starting from $\varepsilon_0$ ($\geq \varepsilon$) until $\varepsilon'$ becomes equal to $\varepsilon$. It uses the $\varepsilon'$-FOSP as the starting point to find an $\varepsilon'/2$-FOSP. In the following corollary, we show that \alqafncc has the same the first-order oracle complexity (up to a $O(\log(\frac1{\varepsilon}))$ factor) as the \lqafncc.
\begin{corollary}[Convergence rate of \alqafncc]
	If the functional components $f_i(x)$'s are $G$-Lipschitz and $L$-smooth,  and the optimal solution is 
	bounded below, i.e.~$f(x)\geq f^* > -\infty$, then after: $K=\ceil[\bigg]{\log_2 \frac{\varepsilon_0}{\varepsilon}}$ {\em outer} steps,  \alqafncc outputs an $\varepsilon$-FOSP. The total first-order oracle complexity to find $\varepsilon$-FOSP is:  $\ceil[\bigg]{\log_2 \frac{\varepsilon_0}{\varepsilon}} \ceil[\bigg]{\frac{4^4 L (f(x_0) - f^*)}{3 \varepsilon^2}} \cdot \ceil[\bigg]{\frac{2^4 G}{\varepsilon} (m \log^{3/2} m) {{}{}}}$. %	total iterations by the inner excessive gap technique step.
	\label{thm:adap_finite_minimax_gd}
\end{corollary}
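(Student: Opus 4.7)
The plan is to reduce the analysis of \alqafncc to a $K$-fold repeated application of Corollary~\ref{thm:finite_minimax_gd}. By construction, \alqafncc sequentially invokes \lqafncc with geometrically shrinking tolerances $\varepsilon_i = \varepsilon_0 / 2^i$, feeding the output $x_0^{(i)}$ of the $i$-th call as the warm start of the $(i{+}1)$-th call. Since each \lqafncc call with tolerance $\varepsilon_i$ returns a bona fide $\varepsilon_i$-FOSP by Corollary~\ref{thm:finite_minimax_gd}, a simple induction yields that the final iterate is an $\varepsilon_K$-FOSP, and one checks that $K = \lceil \log_2(\varepsilon_0/\varepsilon)\rceil$ outer iterations are enough to drive $\varepsilon_K \leq \varepsilon$.

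To bound the total oracle cost, I would first establish monotonicity of $f$ across the outer loop, i.e.~$f(x_0^{(K)}) \leq f(x_0^{(K-1)}) \leq \cdots \leq f(x_0)$. This is not automatic from Corollary~\ref{thm:finite_minimax_gd}, but it follows by inspecting the proof of Corollary~\ref{thm:finite_minimax_gd}: the sequence of iterates within a single \lqafncc call consists entirely of ``Case 1'' descent steps (each of which strictly decreases $f$ by at least $3\tilde\varepsilon/4$), followed by a single terminating ``Case 2'' step, so the returned point has $f$-value no larger than the starting point.

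Combining this monotonicity with Corollary~\ref{thm:finite_minimax_gd}, the $i$-th outer call uses at most
\begin{equation*}
\left\lceil \frac{4^{4} L (f(x_0^{(i-1)}) - f^*)}{3\,\varepsilon_i^2}\right\rceil \cdot \left\lceil \frac{2^{4} G\, (m\log^{3/2} m)}{\varepsilon_i}\right\rceil \;\leq\; \left\lceil \frac{4^{4} L (f(x_0) - f^*)}{3\,\varepsilon^2}\right\rceil \cdot \left\lceil \frac{2^{4} G\, (m\log^{3/2} m)}{\varepsilon}\right\rceil
\end{equation*}
first-order oracle queries, where the inequality uses both $f(x_0^{(i-1)}) \leq f(x_0)$ and $\varepsilon_i \geq \varepsilon$. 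Summing this uniform upper bound over $i = 1, \ldots, K$ produces exactly the product of three ceilings appearing in the corollary.

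The main obstacle is the monotonicity step: once it is in hand, the rest is bookkeeping over a geometric sequence of tolerances. A sharper accounting using $\sum_{i=1}^K \varepsilon_i^{-3} = O(\varepsilon^{-3})$ would in fact remove the extra $K$ factor, but the cruder bound suffices to match the statement, which already permits the advertised $O(\log(\varepsilon_0/\varepsilon))$ overhead over \lqafncc.
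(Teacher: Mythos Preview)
Your proposal is correct and follows essentially the same approach as the paper: bound the cost of each inner \lqafncc call uniformly by the worst-case bound with tolerance $\varepsilon$ and initial gap $f(x_0)-f^*$, then multiply by the $K=\lceil\log_2(\varepsilon_0/\varepsilon)\rceil$ outer iterations. You are in fact more careful than the paper, which silently replaces $f(x_0^{(i-1)})$ by $f(x_0)$ without justifying the monotonicity step you spell out.
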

\begin{proof}
Notice that, each iteration of \alqafncc for finding an $\varepsilon'$-FOSP, is a run of \lqafncc (Algorithm \ref{algo:minimax_gd}), which has a maximum first-order oracle complexity of $\ceil[\bigg]{\frac{4^4 L (f(x_0) - f^*)}{3 \varepsilon^2}} \cdot \ceil[\bigg]{\frac{2^4 G}{\varepsilon} (m \log^{3/2} m) {{}{}}}$ for finding an $\varepsilon'$-FOSP (Corollary \ref{thm:finite_minimax_gd}), as $\varepsilon \leq \varepsilon'$. Further, since $\varepsilon'$ starts at $\varepsilon_0$ and halves after each iteration until $\varepsilon'$ becomes less than or equal to $\varepsilon$, the total number of outer iterations is $K = \ceil[\bigg]{\log_2 \frac{\varepsilon_0}{\varepsilon}}$.
\end{proof}
\noindent
Therefore, \alqafncc has the same first-order oracle complexity as \lqafncc, up to a $O(\log(\frac1{\varepsilon}))$ factor. However, we observe that \alqafncc converges faster than \lqafncc in our experiments.

\begin{algorithm}[]
	\DontPrintSemicolon % Some LaTeX compilers require you to use \dontprintsemicolon instead
	\KwIn{functional components $\{f_i\}_{i=1}^m$, Lipschitzness $G$, smoothness $L$, domain $\cX$, target accuracy $\varepsilon$, initial point $x_0$, initial accuracy $\varepsilon_0$}
	\KwOut{$x_k$}
	$\varepsilon' \leftarrow \max(\varepsilon_0,\;\varepsilon)$\;
	\For{$k = 0, 1, \ldots$}{
	Using Prox-FDIAG (Algorithm \ref{algo:minimax_gd}) initialized at $x_k$, find $x_{k+1} \in \cX$ such that $x_{k+1}$ is an $\varepsilon'$-FOSP (Definition \ref{def:eps_fosp}) of the function $f(x) = \max_{1 \leq i \leq m} f_i(x)$\;
	\If{$\varepsilon = \varepsilon'$} {
	$k \leftarrow k+1$\;
	\KwRet{$x_{k}$}
	}
	\Else{$\varepsilon' \leftarrow \max(\frac{\varepsilon'}{2},\;\varepsilon)$}
	}
	\caption{Adaptive Proximal Finite Dual Implicit Accelerated Gradient (\alqafncc) for finite nonconvex concave minimax optimization}
	\label{algo:minimax_gd_adapt}
\end{algorithm}
%}

\end{document}